\documentclass[12pt,onecolumn]{IEEEtran}

\usepackage{setspace}
\RequirePackage{amsthm,amsmath,amsfonts,amssymb}

\usepackage{natbib}

\usepackage{float}

\usepackage[utf8]{inputenc}
\usepackage{bbm}
\usepackage{bm} 
\usepackage{dsfont}
\usepackage{paralist}
\usepackage{enumitem}
\usepackage[usenames]{color}
\usepackage{url}
\usepackage[colorlinks,
linkcolor=red,
anchorcolor=blue,
citecolor=blue
]{hyperref}
\usepackage{xspace,exscale,relsize}
\usepackage{fancybox,shadow}
\usepackage{graphicx}
\usepackage{caption}
\usepackage{subcaption} 
\usepackage{xcolor}
\usepackage{caption}
\usepackage{extarrows}
\usepackage{comment}
\usepackage{booktabs} 
\usepackage{makecell, multirow}

\usepackage{mathrsfs}
\makeatletter
\let\over=\@@over \let\overwithdelims=\@@overwithdelims
\let\atop=\@@atop \let\atopwithdelims=\@@atopwithdelims
\let\above=\@@above \let\abovewithdelims=\@@abovewithdelims
\makeatother
\usepackage{rotating}

	\usepackage{ifpdf}
	
	\usepackage{psfrag}
	
	\usepackage{prettyref}

	\usepackage{tikz}
	\usetikzlibrary{arrows}
	\tikzstyle{int}=[draw, fill=blue!20, minimum size=2em]
	\tikzstyle{dot}=[circle, draw, fill=blue!20, minimum size=2em]
	\tikzstyle{init} = [pin edge={to-,thin,black}]

	\usepackage[all]{xy}

	\usepackage{algorithm}
	\usepackage{algpseudocode}
	
	\usepackage{mathtools}

	\def\ba{{\boldsymbol a}}   \def\bA{\boldsymbol A}  
	\def\bb{{\boldsymbol b}}     
	     
	\def\bd{{\boldsymbol d}}     
	     \def\EE{\mathbb{E}}

	     \def\PP{\mathbb{P}}

	\def\bu{{\boldsymbol u}}     
	\def\bv{\boldsymbol v}     
	\def\bw{\boldsymbol w}   \def\bW{\boldsymbol W}  
	\def\bx{\boldsymbol x}     
	\def\by{\boldsymbol y}     
	\def\bz{\boldsymbol z}     
	
	\def\11{\mathbbm{1}}

	\def\calB{{\cal  B}}

	\def\calF{{\cal  F}} 
	\def\calG{{\cal  G}} 
	\def\calH{{\cal  H}} 
	 
	\def\calJ{{\cal  J}} 
	 
	\def\calL{{\cal  L}} 
	 
	\def\calN{{\cal  N}} 
	 
	\def\calP{{\cal  P}} \def\cP{{\cal  P}}
	\def\calQ{{\cal  Q}} 
	 
	\def\calS{{\cal  S}}

	\def\calV{{\cal  V}}

	\newcommand{\bfsym}[1]{\ensuremath{\boldsymbol{#1}}}

	           \def\bTheta {\bfsym {\Theta}}

	\def\bomega {\bfsym {\omega}}

	\def\bxi{\bfsym {\xi}}
	
	\DeclareMathOperator{\argmin}{argmin}

	\DeclareMathOperator{\diag}{diag}
	\DeclareMathOperator{\E}{E}

	\DeclareMathOperator{\sgn}{sgn}

	\def\E{{\mathbb{E}}}

	\def\Tr{{{\rm Tr}}}

	\graphicspath {{fig/}}
	
	\usepackage{ifthen}
	\newboolean{aos}
	\setboolean{aos}{FALSE}

	\ifx\eqref\undefined
	\newcommand{\eqref}[1]{~(\ref{#1})}
	\fi
	\ifx\mod\undefined
	\def\mod{\mathop{\rm mod}}
	\fi
	
	\usepackage{bm}

	\newcommand{\norm}[1]{{\left\Vert #1 \right\Vert_2}}

	\def\argmin{\mathop{\rm argmin}}

	\def\exp{\mathop{\rm exp}}

	\def\EE{\Expect}

	\def\PP{\mathbb{P}}

	\def\diag{\mathop{\rm diag}}

	\def\eqdef{\triangleq}

	\def\simiid{\stackrel{iid}{\sim}}
	
	\newcommand{\abs}[1]{\left| #1 \right|}

	\makeatletter
	\def\bbordermatrix#1{\begingroup \m@th
		\@tempdima 4.75\p@
		\setbox\z@\vbox{%
			\def\cr{\crcr\noalign{\kern2\p@\global\let\cr\endline}}%
			\ialign{$##$\hfil\kern2\p@\kern\@tempdima&\thinspace\hfil$##$\hfil
				&&\quad\hfil$##$\hfil\crcr
				\omit\strut\hfil\crcr\noalign{\kern-\baselineskip}%
				#1\crcr\omit\strut\cr}}%
		\setbox\tw@\vbox{\unvcopy\z@\global\setbox\@ne\lastbox}%
		\setbox\tw@\hbox{\unhbox\@ne\unskip\global\setbox\@ne\lastbox}%
		\setbox\tw@\hbox{$\kern\wd\@ne\kern-\@tempdima\left[\kern-\wd\@ne
			\global\setbox\@ne\vbox{\box\@ne\kern2\p@}%
			\vcenter{\kern-\ht\@ne\unvbox\z@\kern-\baselineskip}\,\right]$}%
		\null\;\vbox{\kern\ht\@ne\box\tw@}\endgroup}
	\makeatother
	
	\newcommand{\stepa}[1]{\overset{\rm (a)}{#1}}
	\newcommand{\stepb}[1]{\overset{\rm (b)}{#1}}
	\newcommand{\stepc}[1]{\overset{\rm (c)}{#1}}
	\newcommand{\stepd}[1]{\overset{\rm (d)}{#1}}

	\newcommand{\reals}{\mathbb{R}}
	\newcommand{\naturals}{\mathbb{N}}

	\newcommand{\Expect}{\mathbb{E}}
	
	\newcommand{\Prob}{\mathbb{P}}

	\newcommand{\iid}{iid\xspace}

	\newcommand{\pth}[1]{\left( #1 \right)}
	\newcommand{\qth}[1]{\left[ #1 \right]}
	\newcommand{\sth}[1]{\left\{ #1 \right\}}

	\definecolor{myblue}{rgb}{.8, .8, 1}
	\definecolor{mathblue}{rgb}{0.2472, 0.24, 0.6} 
	\definecolor{mathred}{rgb}{0.6, 0.24, 0.442893}
	\definecolor{mathyellow}{rgb}{0.6, 0.547014, 0.24}

	\newcommand{\tM}{{\tilde{M}}}

	\newrefformat{eq}{(\ref{#1})}
	\newrefformat{thm}{Theorem~\ref{#1}}
	\newrefformat{th}{Theorem~\ref{#1}}
	\newrefformat{chap}{Chapter~\ref{#1}}
	\newrefformat{sec}{Section~\ref{#1}}
	\newrefformat{seca}{Section~\ref{#1}}
	\newrefformat{algo}{Algorithm~\ref{#1}}
	\newrefformat{asmp}{Assumption~\ref{#1}}
	\newrefformat{fig}{Fig.~\ref{#1}}
	\newrefformat{tab}{Table~\ref{#1}}
	\newrefformat{rmk}{Remark~\ref{#1}}
	\newrefformat{clm}{Claim~\ref{#1}}
	\newrefformat{def}{Definition~\ref{#1}}
	\newrefformat{cor}{Corollary~\ref{#1}}
	\newrefformat{lmm}{Lemma~\ref{#1}}
	\newrefformat{prop}{Proposition~\ref{#1}}
	\newrefformat{pr}{Proposition~\ref{#1}}
	\newrefformat{property}{Property~\ref{#1}}
	\newrefformat{app}{Appendix~\ref{#1}}
	\newrefformat{apx}{Appendix~\ref{#1}}
	\newrefformat{ex}{Example~\ref{#1}}
	\newrefformat{exer}{Exercise~\ref{#1}}
	\newrefformat{soln}{Solution~\ref{#1}}
	
	\def\unifto{\mathop{{\mskip 3mu plus 2mu minus 1mu%
				\setbox0=\hbox{$\mathchar"3221$}%
				\raise.6ex\copy0\kern-\wd0%
				\lower0.5ex\hbox{$\mathchar"3221$}}\mskip 3mu plus 2mu minus 1mu}}

	\ifx\lesssim\undefined
	\def\simleq{{{\mskip 3mu plus 2mu minus 1mu%
				\setbox0=\hbox{$\mathchar"013C$}%
				\raise.2ex\copy0\kern-\wd0%
				\lower0.9ex\hbox{$\mathchar"0218$}}\mskip 3mu plus 2mu minus 1mu}}
	\else
	\def\simleq{\lesssim}
	\fi
	
	\ifx\gtrsim\undefined
	\def\simgeq{{{\mskip 3mu plus 2mu minus 1mu%
				\setbox0=\hbox{$\mathchar"013E$}%
				\raise.2ex\copy0\kern-\wd0%
				\lower0.9ex\hbox{$\mathchar"0218$}}\mskip 3mu plus 2mu minus 1mu}}
	\else
	\def\simgeq{\gtrsim}
	\fi

		\theoremstyle{definition}
		
		\newif\ifmapx
		{\catcode`/=0 \catcode`\\=12/gdef/mkillslash\#1{#1}}
		\edef\jobnametmp{\expandafter\string\csname ic_apx\endcsname}
		\edef\jobnameapx{\expandafter\mkillslash\jobnametmp}
		\edef\jobnameexpand{\jobname}
		\ifx\jobnameexpand\jobnameapx
		\mapxtrue
		\else
		\mapxfalse
		\fi

		\renewcommand{\hat}{\widehat}
		\renewcommand{\tilde}{\widetilde}

		\newcommand{\relu}{{\text{ReLU}~}}

		\newcommand{\dist}{{\sf dist}}

		\newtheorem{theorem}{Theorem}
		\newtheorem{lemma}{Lemma}

		\newtheorem{proposition}{Proposition}
		
		\newtheorem{definition}{Definition}
		
		\theoremstyle{definition}
		
		\newtheorem{remark}{Remark}
		
		\newtheorem{myassumption}{Assumption}
		
\newcommand{\phihat}{{\hat \phi}}
\newcommand{\phistar}{{\phi^*}}
\newcommand{\phitilde}{{\tilde \phi}}

\newcommand{\Thetahat}{{\hat \Theta}}
\newcommand{\Thetastar}{{\Theta^*}}
\newcommand{\Thetatilde}{{\tilde \Theta}}

\newcommand{\Sigmahat}{{\hat \Sigma}}
\newcommand{\Sigmastar}{{\Sigma^*}}
\newcommand{\Sigmatilde}{{\tilde \Sigma}}

\newcommand{\sparse}{{\sf SPN}}

\usepackage{physics} 

\begin{document}
	
	\ifpdf
	\DeclareGraphicsExtensions{.pgf,.jpg,.pdf}
	\graphicspath{{figures/}{plots/}}
	\fi
	\title{Minimax Theory of Likelihood-Based Deep Learning for Speckle Regression}
	
	
	\author{Soham Jana \thanks{S.~Jana is with the Department of Applied and Computational Mathematics and Statistics, University of Notre Dame, Notre Dame, IN, USA (correspondence to: \url{soham.jana@nd.edu}).}
		

	}

	\maketitle


	\begin{abstract}
Speckle noise is a multiplicative noise commonly encountered in coherent imaging modalities such as synthetic aperture radar, optical coherence tomography, and digital holography. Although deep learning methods, in practice, have achieved state-of-the-art performance for speckle denoising, their fundamental statistical limits remain largely unexplored. Unlike additive noise models, multiplicative speckle noise makes the regression function unidentifiable from the conditional mean, rendering conventional least-squares-based deep learning approaches inapplicable.

We study the minimax estimation of smooth nonparametric regression functions using likelihood-based deep neural network (DNN) estimators under a model with both multiplicative speckle noise and additive Gaussian noise. Our framework accommodates both low-dimensional and sparse high-dimensional features. We establish finite-sample upper bounds on the estimation error of the proposed DNN estimators and derive minimax lower bounds for nonparametric function recovery under our model, showing that they match up to logarithmic factors in the sample size. Moreover, these minimax rates coincide, up to logarithmic factors, with those for nonparametric regression under additive Gaussian noise alone, demonstrating that the intrinsic difficulty of estimation remains essentially unchanged despite the challenges posed by multiplicative speckle noise. Numerical experiments further supports consistency of our DNN-based despeckling methods and demonstrate their effectiveness.

	\end{abstract}
	
	{\it Keywords: Speckle Noise, Nonparametric regression, Deep neural networks, Sparsity}

	\section{Introduction}
	
	\subsection{Motivation and problem formulation}
	
	Signal recovery in the presence of speckle (multiplicative) noise is a fundamental problem in statistical imaging and machine learning. In coherent imaging systems, the recorded measurements are inherently contaminated by speckle, a granular interference phenomenon arising from the coherent superposition of scattered optical waves \cite{Goodman2007}. Speckle acts as multiplicative noise that degrades image contrast, obscures fine structural details, and fundamentally limits the accuracy of image reconstruction and downstream quantitative analysis. Consequently, developing statistically efficient estimation procedures under multiplicative speckle noise has become a central challenge in modern computational imaging, including Synthetic Aperture Radar (SAR) \cite{Lopez-MartinezFabregas}, Optical Coherence Tomography (OCT) \cite{SXK}, ultrasound imaging \cite{achim2001novel}, digital holography \cite{dh}, and Quantitative Phase Imaging (QPI) \cite{kemper2007digital}. 
	Many such imaging modalities naturally give rise to nonparametric regression problems with multidimensional or high-dimensional covariates. In two-dimensional SAR and medical ultrasound, the covariates correspond to spatial pixel locations, while the underlying regression function describes the coherent reflectivity of the observed object \cite{goodman1976speckle,oliver2004understanding}. Diffusion-weighted MRI (dMRI) models signals jointly over spatial location, diffusion-gradient direction, and acquisition parameters \cite{tournier2011diffusion}, resulting in substantially higher-dimensional regression problems. Many modern imaging modalities acquire observations indexed not only by spatial location but also by acquisition parameters, spectral channels, temporal measurements, or molecular variables, naturally leading to regression problems with high-dimensional covariates \cite{park2018quantitative,lu2014medical}. Despite their high ambient dimension, these signals often possess substantial low-dimensional or sparse structure. Consequently, sparsity-based reconstruction methods have become widely used in coherent imaging, for example, see \cite{gao2023iterative} for $\ell_1$- and $\ell_0$-regularized methods employed in the context of QPI and digital holography to recover phase information from highly undersampled measurements.

	Motivated by these applications, we consider the following nonparametric regression model with multiplicative and additive noise:
	\begin{align}
		\label{eq:model}
		y_i=f^*(\bx_i)\xi_i+\tau_i,
		\qquad
		i=1,\ldots,n,
	\end{align}
	where $
	f^*(\bx):\reals^d\mapsto\reals,
	$ is an unknown, possibly nonlinear, regression function generating the corresponding clean image measurement (or phase value), $\bx_i\in\mathbb{R}^d$ denotes the observed feature vector, with dimension $d\geq 1$, $y_i\in\mathbb{R}$ denotes the observed measurement (e.g., a pixel or phase value), $\xi_i\simiid N(0,1)$ models the {\it speckle noise}, and $\tau_i\simiid N(0,\sigma_\tau^2)$ represents additive measurement noise. The Gaussian assumption on the additive noise is standard in statistical learning. For the multiplicative component, we adopt the fully developed speckle model \cite{argenti2013tutorial}, in which coherent wavefronts scattered by numerous microscopic surface variations give rise to a Gaussian approximation through the central limit theorem \cite{goodman1975statistical}. As noted in \cite{malekian2025speckle}, assuming speckle standard deviation to be unity incurs no loss of generality, since under the assumption of a known speckle standard deviation, the observations can be rescaled to normalize the variance of $\xi_i$.

	In the current work, we investigate minimax guarantees for estimating $f^*$
	under the speckle noise model \eqref{eq:model} using a deep neural network (DNN). DNNs have become a standard approach for image denoising applications by learning a direct mapping from noisy observations to clean images. For example, \cite{tang2019sar} proposed a patch-based DNN for SAR image despeckling and later demonstrated that DNN-based despeckling improves downstream land-cover classification \cite{tang2018land}. While more sophisticated deep architectures, particularly convolutional neural networks, now dominate practical despeckling methods due to their strong empirical performance \cite{chierchia2017sar,dalsasso2020sar}, fully connected DNNs provide a natural starting point for theoretical analysis, which motivates our study. To this end, we emphasize that the multiplicative speckle noise model \eqref{eq:model} requires fundamentally different analytical techniques from those developed for the classical additive noise setting. Unlike additive noise setup, the regression function $f^*$ in \eqref{eq:model} is not identifiable from the conditional mean, as we have
	\begin{equation*}
		\E[y_i \mid \bx_i] = f^*(\bx_i)\,\E[\xi_i] = 0.
	\end{equation*}
	Consequently, the classical strategy of constructing DNN estimators by minimizing the squared-error loss, which underlies most existing statistical analyses of DNN estimators \cite{bauer2019deep,kohler2021rate}, is no longer applicable. Moreover, the fundamental statistical limits of nonparametric estimation under multiplicative speckle noise remain largely unexplored. Existing theoretical works primarily analyze local polynomial or wavelet-based estimators. In particular, matching minimax upper and lower bounds with a general value of the additive noise standard deviation $\sigma_\tau$ have only been established in the one-dimensional setting $d=1$ by \cite{malekian2025speckle}, while \cite{chesneau2020nonparametric,benhaddou2022estimation} derive only upper bounds for fixed-dimensional settings. Developing statistical guarantees for flexible function classes such as deep neural networks is therefore of considerable interest. Unlike local polynomial or wavelet-based methods, deep neural networks provide a model-agnostic framework for approximating complex nonlinear functions without requiring carefully designed local approximations or basis expansions. In view of these challenges, we ask the following questions.
	\begin{itemize}
		\item How can we construct a DNN estimator to efficiently estimate $f^*$?
		\item What risk bounds hold for deep learning estimators, and are they minimax optimal? This extends the proposed problem of \cite{malekian2025speckle} to the multivariate setup, aiming to explore whether speckle denoising is equally challenging to mitigate compared to the purely additive noise model.
		\item How can we perform signal recovery with high-dimensional features when the underlying signal depends on only a few unknown sparse coordinates and speckle noise is present?
	\end{itemize}
	We answer these questions in the regime where $\sigma_\tau$ is of constant order. Specifically, we propose a likelihood-based deep neural network estimator obtained by minimizing the negative log-likelihood
	\begin{align}
		\label{eq:likelihood-full}
		\ell_{\text{FULL}}(f)
		\eqdef
		\ell_{\text{FULL}}\pth{f \mid \sth{(y_i,\bx_i)}_{i=1}^n}
		=
		\frac{1}{n}\sum_{i=1}^n
		\frac{y_i^2}{\{f(\bx_i)\}^2+\sigma_\tau^2}
		+
		\frac{1}{n}\sum_{i=1}^n
		\log\!\bigl(\{f(\bx_i)\}^2+\sigma_\tau^2\bigr)
	\end{align}
	over an appropriately constrained class of deep neural networks. For the sparse high-dimensional setting, we develop an $\ell_1$-penalized likelihood estimator that simultaneously performs variable selection and function estimation. We establish theoretical guarantees for these estimators, including near-minimax optimality, as described below.
	
	\subsection{Our contributions}
	
	To the best of our knowledge, this is the first work to provide a minimax theoretical foundation for deep neural network-based despeckling of nonparametric signals. Assuming the true $f^*$ is a smooth nonparametric function, we establish finite-sample risk bounds for the proposed likelihood-based estimators and prove their near-minimax optimality by deriving matching minimax lower bounds under the speckle noise model, up to logarithmic factors. Furthermore, we show that the resulting minimax rates coincide, up to logarithmic factors, with those for nonparametric regression under additive Gaussian noise alone, demonstrating that multiplicative speckle noise does not fundamentally increase the statistical difficulty of function estimation, even though it significantly alters the estimation strategy. Numerical experiments corroborate the theoretical predictions and illustrate the practical effectiveness of the proposed methods. The main contributions are summarized below.
	
	\begin{enumerate}
		\item \textbf{Theoretical analysis of deep learning estimator without sample splitting:} Our work first presents a deep neural network estimator for speckle denoising when the true generating functional is a smooth function of low-dimensional feature vectors.
		Our estimation strategy optimizes the negative log-likelihood presented in \eqref{eq:likelihood-full}. In particular, when the target function $f^*$ belongs to a class of smooth functions with degree adjusted smoothness $\gamma^*$ (see \prettyref{def:beta-C-smooth} for the details), our estimator achieves a mean squared error rate of $\tilde O(n^{-{2\gamma^*\over 2\gamma^*+1}})$, where $\tilde O$ hides logarithmic factors in the sample size $n$. Note that the above rate is similar to the classical nonparametric regression rate $O(n^{-{2\beta\over 2\beta +d}})$ for estimating $\beta$-smooth functions in $d$-dimensions with additive noise \cite{gyorfi2002distribution} (here $\gamma^*$ is similar to $\beta/d$, see \prettyref{rmk:smoothness} for a comparison). Our proof technique presents an approach based on the empirical process theory and covering bound construction, that establishes the desired error guarantees without requiring any sample splitting.
		
		\item \textbf{Guarantees for sparse estimation:} In the presence of high-dimensional feature vectors, assuming that the data generating function is smooth and depends on unknown sparse locations of the feature vector, we present a penalized likelihood based deep-learning strategy for functional estimation. Our estimator achieves a mean squared error rate of $\tilde O(n^{-{2\gamma^*\over 2\gamma^*+1}})$, where $\tilde O$ hides logarithmic factors in the sample size $n$ and the target function $f^*$ belongs to a class of smooth functions with degree adjusted smoothness $\gamma^*$. Our results hold even when $d$ is significantly larger, provided the number of locations affecting the construction of $f^*$ is small.
		
		\item \textbf{Minimax lower bound:} We provide a minimax lower bound of $\Omega_{\sigma_\tau}(n^{-{2\gamma^*\over 2\gamma^*+1}})$ on the mean squared error of estimation, when the regression function $f^*$ belongs to a class of functions with degree adjusted smoothness $\gamma^*$ and $\Omega_{\sigma_\tau}$ hides multiplicative constants depending on $\sigma_{\tau}$. Our results show that the deep learning estimators are optimal up to logarithmic factors in the sample size. Our result extends the lower bound of \cite{malekian2025speckle} in the case $d=1$ to the multidimensional setting.
		
		\item \textbf{Empirical validation:} We present numerical results to validate the theoretical guarantees of our deep neural network estimators. In the low-dimensional setting, our experiments investigate the estimation accuracy of the proposed estimator under varying sample sizes, network depths, and numbers of training epochs. In the high-dimensional setting, we demonstrate that our estimator successfully identifies the active features and that the detection accuracy improves as the signal strength increases.
		
	\end{enumerate}

	\subsection{Related literature}
	
	Classical nonparametric regression has been extensively studied over the past several decades. \cite{fan1996local} showed that local polynomial estimators attain the multidimensional minimax rates established by \cite{stone1982global}. Similar optimal error guarantees were obtained for kernel regression estimators by \cite{wand1994kernel}. Wavelet thresholding provides a spatially adaptive alternative through sparse representations. In particular, \cite{donoho1995wavelet,donoho1998minimax} proved that soft-thresholding estimators achieve near-minimax rates over Besov spaces simultaneously for a broad range of smoothness and integrability parameters. While these methods enjoy strong theoretical guarantees, they exhibit important practical limitations. Local polynomial estimators become increasingly ineffective in moderate to high dimensions, whereas wavelet-based methods require carefully selecting a basis adapted to the underlying signal and application.
	
	The theory of nonparametric regression under multiplicative noise is comparatively less developed. The seminal work of \cite{korostelev2012minimax} provides a comprehensive analysis of boundary estimation under multiplicative noise with finite-valued error distributions and characterizes minimax rates using local polynomial estimators. For Gaussian speckle noise, \cite{malekian2025speckle} established the minimax rate
	$\pth{\sigma_\tau^2\over n}^{-2\beta/ (2\beta+1)}$ in the one-dimensional setting $d=1$, where $\beta$ denotes the smoothness of the regression function $f^*$. The optimal rate is achieved by a local polynomial estimator. Earlier multidimensional results \cite{chesneau2020nonparametric,benhaddou2022estimation} considered the regime $d=O(1)$ and $\sigma_\tau$ is of constant order, and established the upper bound
	$
	O\left(n^{-2\beta/(2\beta+d)}\right)$
	using wavelet and Laguerre-polynomial-based estimators, respectively. However, these works leave open the question of whether this rate is minimax optimal in the multidimensional setting.
	
	Deep neural networks provide a flexible alternative to classical nonparametric estimators by learning complex function classes without relying on handcrafted local approximations or basis expansions. Their empirical success for speckle denoising has been widely demonstrated; see, for example, \cite{mohan2021deep,hyun2019beamforming}. Nevertheless, theoretical understanding of deep learning methods for multiplicative-noise models remains limited. A recent line of work \cite{chen2024novel,chen2026maximum} investigated likelihood-based deep learning approaches for despeckling. Subsequently, \cite{chen2025multilook,xing2025minimax} established statistical guarantees for likelihood-based estimation when the underlying signal belongs to structured function classes with complexity comparable to locally polynomial models and the optimization is performed over parameterizations closely matched to the true signal. In contrast, the present work studies a substantially richer nonparametric function class and analyzes likelihood-based estimation over deep neural network models in a fully model-agnostic framework.
	
	There is also a growing body of literature on deep neural networks for nonparametric regression under additive noise models \cite{yarotsky2017error,kohler2021rate,bauer2019deep,schmidt2020nonparametric,bhattacharya2024deep}. Earlier empirical studies \cite{lecun2015deep,mousavi2015deep} demonstrated the ability of deep neural networks to exploit low-dimensional structures in high-dimensional data. Our analysis builds upon empirical process techniques to uniformly control the stochastic fluctuations of the likelihood over the neural network hypothesis class. Similar empirical process arguments have recently been employed for additive-noise models in both low-dimensional and sparse estimation settings \cite{fan2024noise,fan2025factor}. The key distinction in the present work is that we analyze a likelihood corresponding to multiplicative speckle noise rather than the conventional squared-error loss. Although neural-network-based methods for multiplicative-noise denoising have also been proposed \cite{zhao2021dual,moradkhani2022deep}, these works do not provide statistical guarantees for nonparametric function estimation under the multiplicative noise model considered here.

	\subsection{Notations}
	
	For two sequences of numbers $\{a_n\}$ and $\{b_n\}$, indexed with $n$, we say $a_n=O(b_n)$, or equivalently $b_n=\Omega(a_n)$, if there exist constants $C>0$, and $M>0$, such that for all $n>M$, $|a_n|\le C |b_n|$. Also, $a_n=o(b_n)$, or equivalently $b_n=\omega(a_n)$ if $\lim_{n\to\infty} {a_n}/{b_n} =0$. We use bold letters $\bu,\bv, \bx,\by$ to denote vectors and regular letters to denote scalars, unless mentioned otherwise. Given two numbers $x,y$, denote $x\vee y =\max\{x,y\}$ and $x\wedge y=\min\{x,y\}$. Matrices are usually denoted using capital letters, such as $B, \Theta,\Sigma$, etc. We write $a_n = \Theta (b_n)$ if $a_n = O(b_n)$ and $b_n = O(a_n)$. 
	The \emph{Hamming distance} between two vectors $\bu,\bv \in \{0,1\}^m$, is denoted with $\rho (\bu,\bv)$, and is defined as  
	$
	\rho(\bu,\bv) := \sum_{i=1}^m \mathbbm{1}\{u_i \neq v_i\},
	$
	where $\mathbbm{1}\{\cdot\}$ denotes the indicator function. Given a matrix $\bA=[A_{i,j}]_{i\in [n], j\in [m]}$, we define the norms $\|\bA\| = \sup_{\bx \in \mathbb{R}^m, \|\bx\|_2=1} \| \bA \bx\|_2$, $\|\bA\|_{HS} = \sqrt{\sum_{i,j} A_{i,j}^2}$, and $\|\bA\|_{\max}=\max_{i\in [n],j\in [m]} |A_{i,j}|$. Moreover, we use $\lambda_{\min}(\bA)$ and $\nu_{\min}(\bA)$ to denote its minimum eigenvalue and singular value, respectively.
	For function $\phi: \mathbb{R}^d \rightarrow \mathbb{R}$, we use the following notations:
	$$\lVert \phi \rVert^2_2 :=\int_{(-\infty,\infty)^d} \phi(\bx)^2 d\bx, \quad
	\lVert \phi \rVert^2_{2,\calQ} :=\int \phi(\bx)^2 d\calQ(\bx),
	\quad
	\phi_{\max} :=\lVert \phi \rVert_{\infty}:= \sup_{\bx} |\phi(\bx)|.$$
	For a vector $\bx$, we denote its Euclidean norm by $\|\bx\|_2$. We also use $\ell_0,\ell_1$ to denote the norm indicating the number of nonzero coordinates and the norm of absolute sum, respectively.
	Given non-random points $\bx_1,\dots,\bx_n\in \reals^d$, define the sample-based squared norm 
	$
	\|\phi\|_n^2=\frac 1n \sum_{i=1}^n (\phi(\bx_i))^2.
	$
	Given a random variable $\xi$, define its subgaussian norm given by $\|\xi\|_{\rm subgau} = \inf\{t>0 : \mathbb{E} (\exp (\xi^2/t^2)) \leq 2\}$ and its subgexponential norm given by $\|\xi\|_{\rm subexp} = \inf\{t>0 : \mathbb{E} (\exp (\xi/t)) \leq 2\}$.
	Denote by $\calN(\delta,\calV,\norm{\cdot})$ the $\delta$-covering number of a set $\calV$ with respect to a norm $\norm{\cdot}$.
	\subsection{Organization of our paper}
	
	The paper is organized as follows. We present major definitions in \prettyref{sec:prep}. We state our main theoretical results regarding the despeckling problem in \prettyref{sec:assumptions-main-results}. We present our numerical results in \prettyref{sec:simulation}. \prettyref{sec:likelihood-diff} presents a short sketch our our proof strategy. The remaining sections provide the technical details of the proof and more relevant results.

	\section{Theoretical contributions}
	\label{sec:theory}
	
	\subsection{Relevant definitions for the class of target functions and deep neural network estimators}
	\label{sec:prep}
	We present a few definitions relevant to the neural network construction and the smoothness of the target function class that we use throughout the paper.
	We build our model using a fully connected deep neural network with ReLU activation $\bar \sigma(\cdot)=\max\sth{\cdot,0}$. Before presenting our methodology and results, we provide definitions that we will rely on throughout the manuscript.
	
	\begin{definition}[Deep \relu network class]
		\label{def:relu-class}
		Let $L$ be any positive integer and $\bd = (d_1, . . . , d_{L+1}) \in \naturals^{L+1}$. A deep ReLU network $g:\reals^{d_0} \to \reals^{d_{L+1}}$ is given as in the form
		\begin{align}
			\label{eq:relu}
			g(\bx)
			=
			\calL_{L+1} \circ \bar\sigma \circ \calL_{L} \circ \bar\sigma \circ \cdots \circ \calL_2 \circ \bar\sigma \circ \calL_1(\bx),
		\end{align}
		where $\calL_{\ell}(\bz) = \bW_{\ell}\bz + \bb_\ell$ is a linear transformation with the weight parameters $\bW_{\ell} \in \reals^{d_{\ell}\times d_{\ell-1}},\bb_\ell \in \reals^{d_{\ell}}$, and $\bar\sigma : \reals^{d_{\ell}}\mapsto \reals^{d_\ell}$ applies the ReLU activation function coordinatewise.
		For any $L\in \naturals,\bd\in \naturals^{L+1}, B,M\in \reals^+\cup \{\infty\}$, the deep \relu network family $\calG(L,\bd,M,B)$ with truncation level $M$, depth $L$, width vector $\bd$, and weight bound $B$ is given as
		\begin{align*}
			\calG(L,\bd,M,B)
			=
			\sth{\Tr_M(g(\bx))\vee M^{-1} : g \text{ of form \eqref{eq:relu} with } \|\bW_{\ell}\|_{\max}\leq B,\|\bb_{\ell}\|_{\max}\leq B},
		\end{align*}
		where $\Tr_{M}(\cdot)$ is the coordinatewise truncation operator given by $[\Tr_M(\bz)]_i=\sgn(z_i)(|z_i|\wedge M)$ and $\norm{\cdot}_{\max}$ denotes the supremum norm of a vector. The class of deep \relu networks with depth $L$ and width $N$ is given by the specific case $\bd=(d_{in},N,N,\dots,N,d_{out})$, and we denote it throughout the text by $\calG(L,d_{in},d_{out}, N,M,B)$. Given two neural net parameters $\phi=\sth{(\bW_\ell,\bb_\ell)}_{\ell=1}^{L+1},\breve\phi=\{(\breve\bW_\ell,\breve\bb_\ell)\}_{\ell=1}^{L+1}$ from $\calG(L,d,1,N,M,B)$, define the max-norm between $\phi,\breve\phi$ as 
		\begin{align}
			\label{eq:metric-neural-net}
			\|\phi - \breve \phi\|_\infty = \max_{1\le \ell\le L+1} \left( \|\bb_\ell - \breve{\bb}_{\ell}\|_\infty \lor \|\bW_\ell - \breve{\bW}_\ell \|_{\max}\right).
		\end{align}
	\end{definition}
	
	We will use the following class of hierarchical composition functions to model $f^*$.
	
	\begin{definition}[$(\beta,C)$-smooth functions]
		\label{def:beta-C-smooth}
		A $d$-variate function $f$ is called $(\beta,C)$-smooth for $\beta,C>0$ if the following is satisfied. Decompose $\beta$ into integer part $r\geq 0$ and fraction part $0<s<1$. Then, for every non-negative sequence $\alpha \in \naturals^d$ with $\sum_{j=1}^d \alpha_j = r$, the partial derivative $(\partial f)/(\partial x_1^{\alpha_1}\dots x_d^{\alpha_d})$ exists, and
		\[
		\left|{\partial^r f\over \partial x_1^{\alpha_1}\dots \partial x_d^{\alpha_d}}(\bx)
		-
		{\partial^rf\over \partial x_1^{\alpha_1}\dots \partial x_d^{\alpha_d}}(\bz)\right|
		\leq C \|\bx - \bz\|^s_2.
		\]
	\end{definition}
	
	\begin{definition}[Hierarchical composition of smooth functions \cite{kohler2021rate,fan2024factor}]
		\label{def:hierarchical}
		Fix a constant $C>0$. Let $\calH(d,l,\calP)$ denote the class of $l$-depth and $d$-variate hierarchical composition of $(\beta,C)$-smooth functions for $(\beta,t)$ in a set $\calP$ with
		\[
		\label{eq:hierarchical}
		\calP \subset [1,\infty)\times \naturals^+, \qquad \sup_{(\beta,t)\in \calP} (\beta \vee t) <\infty
		\]
		\begin{itemize}
			\item $(l=1)$ We have the set of all $t$-variate functions with $(\beta,C)$ smoothness
			\begin{align*}
				\calH(d, 1,\calP)
				=
				&\left\{h : \reals^d \mapsto \reals : h(\bx) = g(\bx_{\calJ}), \text{ where } g : \reals^t \mapsto \reals \text{ is }\right. \\
				&\left.\text{$(\beta,C)$-smooth for some $(\beta, t) \in \calP$ and $\calJ\subseteq [d],|\calJ|=t$}\right\}
			\end{align*}
			\item $(l\geq 2)$ We recursively define $\calH(d,l,\calP)$ as
			\begin{align*}
				\calH(d, l,\calP)
				=
				&\left\{h : \reals^d \mapsto \reals : h(\bx) = g(f_1(\bx),\dots, f_t(\bx)), \text{ where } g : \reals^t \mapsto \reals \text{ is }  \right. \\
				&\left.\text{$(\beta,C)$-smooth for some $(\beta, t) \in \calP$ and $f_i\in \calH(d,l-1,\calP),i\in [t]$}\right\}
			\end{align*}
		\end{itemize}
	\end{definition}
	
	Basically, $\calH(d,l,\calP)$ consists of the $l$-fold compositions of $t$-variate functions of $(\beta, C)$ smoothness for any $(t, \beta) \in \cP$.
	The accuracy of estimating $f^*\in \calH(d, l,\calP)$ will be quantified by the parameter $\gamma^*$ indicating the hardness of the above composition class.
	
	\begin{definition}[Dimension adjusted smoothness parameter of $\calH(d, l,\calP)$]
		\label{def:dim-adjusted-smoothness}
		Given any function class $\calP$ satisfying \eqref{def:hierarchical} the hardness quantifier $\gamma^*$ of the worst case error of approximating any function in $\calH(d, l,\calP)$ by a deep \relu network is quantified by 
		\begin{align}\label{eq:gamma*}
			\gamma^* ={\beta^* \over d^*} \text{ with } (\beta^*, d^*) = \argmin_{(\beta,t)\in \calP}{\beta\over t}.
		\end{align}
		In view of \cite{kohler2021rate}, we restrict to the case where all the compositions in $\calH(d,l,\calP)$ have a smoothness parameter $\beta \geq 1$ to simplify the presentation. 
	\end{definition}

	\subsection{Error upper bounds for likelihood based DNN estimators}
	
	\label{sec:assumptions-main-results}
	Note that the full data likelihood \eqref{eq:likelihood-full} is equivalent to the likelihood corresponding to the model $y_i=\tilde f(\bx_i)\tilde \xi_i$ where $ (\tilde f(\bx_i))^2=( f^*(\bx_i))^2+\sigma_\tau^2$ and $\tilde \xi_i\simiid N(0,1)$. In view of the above, it is information theoretically impossible to separate the estimation of $f^*(\cdot)$ from the estimation of $(f^*(\cdot))^2+\sigma_\tau^2$, without making further structural assumptions on $f^*$ and $\sigma_{\tau}$. In view of the above, for the current work, we assume $\sigma_\tau$ is known and primarily focus on recovering the signal $f^*$. Similar restrictions were also considered in previous works, such as \cite{malekian2025speckle}.

	\begin{myassumption}
		\label{asmp:model-and-hyperparameters}
		
		We make the following assumptions on the data generating function and parameters, and the deep neural networks used to construct the estimators.
		\begin{enumerate}[label=(\alph*)]
			\item \label{pt:true-functions} The true data generating function $f^*$ satisfies
			$f^*\in \calH(d,l,\calP)$, and $\calP$ has the dimension-adjusted smoothness $\gamma^*>\frac 12$, where $\gamma^*$ is given by \prettyref{def:dim-adjusted-smoothness}. The coordinates of $\bx$ reside in $[-K,K]$ for some constant $K>0$. The data generating function $f^*$ satisfies $f_{\min}\leq f^{*}(\bx)\leq f_{\max}$ for all $\bx$ in the domain, where $0<M^{-1}<f_{\min}< f_{\max}<M$ are constants. The variance parameter $\sigma_\tau$ is assumed to be known and it is at most a constant. The feature vectors $(\bx_1,\dots,\bx_n)$ are fixed.
			
			\item \label{pt:hyperparam} We choose $M>0$ as a large constant. The estimators are chosen from the class of deep ReLU network $\calG(L,d,1,N,M,B)$ with the following hyperparameters. For constants $c_1,\dots,c_6$, which only depend on $l$ and $\mathcal{P}$ of $\mathcal{H}(d, l,\mathcal{P})$, we have
			\begin{align*}
				\begin{gathered}
					c_1 \le L \leq c_2,\quad  c_3 \log n \le \log B \leq c_4\log n,
					\quad
					c_5 (n/\log n)^{\frac{1}{4\gamma^*+2}} \le N \leq c_6 (n/\log n)^{\frac{1}{4\gamma^*+2}}.
				\end{gathered}
			\end{align*}
		\end{enumerate}
	\end{myassumption}
	\begin{remark}[Explaining the assumptions]
		\prettyref{asmp:model-and-hyperparameters}\ref{pt:true-functions} imposes regularity conditions on the data-generating process. We assume that the unknown regression function $f^*$ belongs to the the smooth, hierarchical, function class $\mathcal{H}(d,l,\mathcal{P})$, that can be efficiently represented by deep neural networks. The dimension-adjusted smoothness condition $\gamma^*>1/2$ ensures sufficient regularity for nonparametric estimation. Since the observations are corrupted by multiplicative speckle noise, we further assume that $f^*$ is uniformly bounded away from zero and infinity. The lower bounds on both the true and model functions ensures that the likelihood objective \eqref{eq:likelihood-full} arising the multiplicative noise model is nontrivial, and the upper bounds on the relevant feature coordinates ensures that the space is compact.
		
		\prettyref{asmp:model-and-hyperparameters}\ref{pt:hyperparam} specifies the network architecture and its scaling with the sample size. The network depth is chosen to be a constant, while the common width $N\asymp (n/\log n)^{1/(4\gamma^*+2)}$ is selected for technical reasons and helps us to ensure that the approximation error of the network and estimator variance are balanced. The bound on the network weights is allowed to grow polynomially with the sample size in a way that helps us control the complexity of the hypothesis class, while allowing sufficient strength for estimating the target functional. These choices are standard in the minimax analysis of deep ReLU networks and yield the optimal convergence rates established in our results, for example see \cite{kohler2021rate,fan2024noise,fan2025factor}.
	\end{remark}
	Given any function $f$, consider fixed-feature $(\bx_1,\dots,\bx_n)$-based squared norm $\|f\|_n^2=\frac 1n\sum_{i=1}^n f^2(\bx_i)$. In view of the above assumptions, we have the following estimation guarantees.
	\begin{theorem}[Error upper bound with finite dimensional features]
		\label{thm:finite-dim}
		Assume that $f^*$ satisfies  \prettyref{asmp:model-and-hyperparameters}\ref{pt:true-functions} and $d$ is a constant.
		Consider the Deep-NN estimator $f_\phihat(\bx)$ with weights $\phihat$ that satisfies the hyperparameter constraints in \prettyref{asmp:model-and-hyperparameters}\ref{pt:hyperparam} and minimizes the full data likelihood given in \eqref{eq:likelihood-full}
		$$
		f_\phihat 
		\in \argmin_{f_\phi\in \calG(L,d,1,N,M,B)}
		\ell_{\text {FULL}}\pth{f_\phi|\sth{(y_i,\bx_i)}_{i=1}^n}
		$$ 
		Then, there are constants $c_4,c_5>0$ such that, with probability at least $1-{1\over n^{c_5}}$
		\begin{align*}
			\|f_\phihat-f^*\|_n^2 \le c_4\left({\log n}\right)^{\frac{2\gamma^*}{2\gamma^*+1}}
			\cdot n^{-\frac{2\gamma^*}{2\gamma^*+1}}.
		\end{align*} 
	\end{theorem}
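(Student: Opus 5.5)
We follow the standard oracle-inequality route for M-estimators, using the likelihood in place of least squares. Write $v(f)=f^2+\sigma_\tau^2$ and $v^*=(f^*)^2+\sigma_\tau^2$. Given $\bx_i$, the observation $y_i$ is $N(0,v^*(\bx_i))$. Hence
\[
\E\,\ell_{\text{FULL}}(f)-\E\,\ell_{\text{FULL}}(f^*)=\frac1n\sum_{i=1}^n\Big(\frac{v^*}{v}-1-\log\frac{v^*}{v}\Big)(\bx_i),
\]
which is twice the average KL divergence between $N(0,v^*(\bx_i))$ and $N(0,v(\bx_i))$. All $f\in\calG(L,d,1,N,M,B)$ take values in $[M^{-1},M]$, and $f^*\in[f_{\min},f_{\max}]$. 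So the ratio $v^*/v$ lies in a compact subinterval of $(0,\infty)$. On that interval $u-1-\log u\asymp(u-1)^2$. Since $f,f^*\ge M^{-1}>0$, we also have $|v-v^*|=|f-f^*|(f+f^*)\asymp|f-f^*|$. Together these give the curvature bound
\[
\E\,\ell_{\text{FULL}}(f)-\E\,\ell_{\text{FULL}}(f^*)\asymp\|f-f^*\|_n^2 .
\]
This equivalence is where the lower bounds on the network outputs and on $f^*$ are used, and it resolves the identifiability issue: $v$ determines $f$ because both are positive.

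Next, the basic inequality. Let $f_0\in\calG$ be an approximant of $f^*$. By the approximation results of \cite{kohler2021rate}, with the width $N$, depth $L$ and weight bound $B$ of \prettyref{asmp:model-and-hyperparameters}\ref{pt:hyperparam}, we can take $\|f_0-f^*\|_\infty\lesssim N^{-2\gamma^*}$. Truncating at $M$ and flooring at $M^{-1}$ does not hurt, since $f^*\in[f_{\min},f_{\max}]$ lies strictly inside that range. Since $\ell_{\text{FULL}}(f_\phihat)\le\ell_{\text{FULL}}(f_0)$, define the centered process
\[
Z(f)=\frac1n\sum_{i=1}^n (y_i^2-v^*(\bx_i))\Big(\frac1{v(f)}-\frac1{v^*}\Big)(\bx_i),
\]
where $y_i^2-v^*(\bx_i)=v^*(\bx_i)(\tilde\xi_i^2-1)$ is sub-exponential. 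The basic inequality then yields
\[
\|f_\phihat-f^*\|_n^2\lesssim\|f_0-f^*\|_n^2+|Z(f_\phihat)|+|Z(f_0)| .
\]
For fixed $f$, the map $g_f=1/v(f)-1/v^*$ is bounded with $\|g_f\|_n\asymp\|f-f^*\|_n$. Bernstein's inequality therefore gives
\[
|Z(f)|\lesssim\|f-f^*\|_n\sqrt{t/n}+t/n .
\]

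The main obstacle is controlling $Z$ uniformly over the random $f_\phihat$ without sample splitting. We plan a covering argument in the parameter max-norm \eqref{eq:metric-neural-net}. ReLU networks with depth $L$, width $N$ and weights bounded by $B$ are Lipschitz in their parameters with constant $(c\,NB)^{L+1}$ in sup norm, and truncation is $1$-Lipschitz. Consequently a $\delta$-net with $\delta=n^{-C}$ has log-cardinality
\[
\log\calN\lesssim P\log(nNB)\lesssim N^2\log n,
\]
where $P\asymp LN^2$ is the number of parameters. The map $f\mapsto g_f$ is Lipschitz in sup norm.

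On the event $\max_i|\tilde\xi_i^2-1|\lesssim\log n$, which holds with probability $1-n^{-c}$, the discretization error is at most $n^{-C'}$. We then union-bound Bernstein over the net, with $t\asymp N^2\log n$, to obtain, uniformly over $f\in\calG$,
\[
|Z(f)|\lesssim\|f-f^*\|_n\sqrt{N^2\log n/n}+N^2\log n/n .
\]
To pass from the net to $f_\phihat$, we handle the multiplicative factor $\|f-f^*\|_n$ either by absorbing it directly through the inequality $ab\le a^2/4+b^2$, or by peeling over shells of $\|f-f^*\|_n$. Absorbing gives
\[
\|f_\phihat-f^*\|_n^2\lesssim N^{-4\gamma^*}+\frac{N^2\log n}{n}.
\]

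Finally we balance the two terms. The choice $N\asymp(n/\log n)^{1/(4\gamma^*+2)}$ makes both equal to
\[
(n/\log n)^{-2\gamma^*/(2\gamma^*+1)},
\]
which is the claimed rate. One point to check is that the approximation rate of \cite{kohler2021rate} is indeed $N^{-2\gamma^*}$ in sup norm for width $N$ and constant depth, with weights polynomial in $n$. If instead their rate is stated for a different width scaling, we would re-tune $N$ accordingly. The condition $\gamma^*>1/2$ may be needed there or to ensure that the parameter count stays $o(n)$.
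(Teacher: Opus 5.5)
Your proposal is correct and follows essentially the same route as the paper: a basic inequality against a sup-norm approximant; a split of the likelihood difference into a centered diagonal quadratic form in $\by$ plus the term $\frac1n\sum_i(u_i-1-\log u_i)$, bounded below by a multiple of $\|f-f^*\|_n^2$; a parameter-space $\delta$-net with $\delta=n^{-C}$, union bound and Lipschitz discretization; and finally balancing $N^{-4\gamma^*}$ against $N^2\log n/n$. The differences are cosmetic. You work directly with $v=f^2+\sigma_\tau^2$, which proves the statement for the $\ell_{\text{FULL}}$ minimizer more directly than the paper's reduction to $\sigma_\tau=0$, and you use scalar Bernstein where the paper invokes Hanson--Wright, which is equivalent here because the matrix is diagonal; the approximation rate you flagged is exactly the one the paper uses (Lemma~\ref{lmm:approx-smooth}: constant depth, width $\asymp N$, weights polynomial in $N$, sup-norm error $\lesssim N^{-2\gamma^*}$).
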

	
	Next, we present our estimation guarantees for the high-dimensional feature setup.
	
	\begin{theorem}[Error upper bound in sparse high-dimensional settings]
		\label{thm:sparse}
		Assume that the data generating function $f^*(\bx)$ only depends on $\bx$ via a finite set of unknown coordinates $\bx_\calJ$, where the size of the coordinate set satisfies $|\calJ|<<d$. Let $f^*$ satisfy \prettyref{asmp:model-and-hyperparameters}\ref{pt:true-functions} and the feature dimension $d$ satisfies $d\leq n^{c_0}$ for some constant $c_0>0$.
		Consider the Deep-NN estimator $f_\phihat(\bx)$  constructed by maximizing the $\ell_1$-penalized data likelihood given in \eqref{eq:likelihood-full}
		\begin{align}
			\label{eq:sparse-loss}
			f_\phihat,\Thetahat \in \argmin_{f_\phi\in \calG(L,d,1,N,M,B)\atop \Theta\in \reals^{d\times N}}
			\ell_{\text {FULL}}\pth{f_\phi|\sth{(y_i,\Tr_M\{\Theta^\top \bx_i\})}_{i=1}^n}
			+\lambda\sum_{i,j}\psi_\tau(\Theta_{i,j}),
		\end{align} 
		with hyperparameters as in \prettyref{asmp:model-and-hyperparameters}\ref{pt:hyperparam}, $\psi_\tau(x)=\frac{|x|}{\tau} \wedge 1$ and the tuning parameters $\lambda,\tau$ are given as
		\begin{align*}
			&\lambda = c_2 {\log(ndN) + L\log(BN)\over n},\quad 
			\tau^{-1} \geq  c_3(BN)^{L+1}Nd{n^3},
		\end{align*}
		for constant $c_2,c_3>0$. Denote the new estimator $\hat f^\sparse(\bx)$ as
		$
		\hat f^\sparse(\bx)
		=\hat f^\sparse(\bx;\hat \phi,\hat\Theta)
		=f_{\phihat}(\Tr_M(\hat\Theta^\top \bx)).
		$
		Then there are constants $c_4,c_5>0$ such that, with probability at least $1-{1\over n^{c_5}}$, $\hat f^\sparse$ satisfies
		\begin{align*}
			\|\hat f^\sparse-f^*\|_n^2 \le c_4\left({\log n}\right)^{\frac{2\gamma^*}{2\gamma^*+1}}
			\cdot n^{-\frac{2\gamma^*}{2\gamma^*+1}}.
		\end{align*} 
	\end{theorem}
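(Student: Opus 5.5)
The argument follows the same route as for \prettyref{thm:finite-dim}: a basic inequality for the penalized likelihood, plus uniform control of the empirical process. The difference is that the complexity of the augmented class $\{f_\phi(\Tr_M(\Theta^\top\cdot))\}$ is now measured through the effective number of non-negligible entries of $\Theta$, which the penalty $\lambda\sum\psi_\tau(\Theta_{i,j})$ keeps small.

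\textbf{Step 1 (oracle and approximation).} Since $f^*(\bx)=g^*(\bx_\calJ)$ with $g^*\in\calH(|\calJ|,l,\calP)$, I take the oracle $\Theta^o$ to be the $d\times N$ matrix whose first $|\calJ|$ columns are the coordinate vectors $e_j$, $j\in\calJ$, and whose remaining columns are zero. Then $\Tr_M(\Theta^{o\top}\bx)$ reproduces $\bx_\calJ$, padded with zeros, because $|x_j|\le K<M$. The DNN approximation result for hierarchical compositions (as in \cite{kohler2021rate,fan2024noise}, already used for \prettyref{thm:finite-dim}) then gives $\phi^o$ with
\[
\|f_{\phi^o}(\Tr_M(\Theta^{o\top}\cdot))-f^*\|_\infty\lesssim N^{-2\gamma^*}\asymp (\log n/n)^{\gamma^*/(2\gamma^*+1)}.
\]
The penalty of the oracle is at most $\lambda|\calJ|$, which is $O(\log n/n)$ because $d\le n^{c_0}$ and $\log B\asymp\log n$.

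\textbf{Step 2 (basic inequality).} Write $\ell(f)$ for the likelihood and $h=f^2+\sigma_\tau^2$. By optimality, $\ell(\hat f^\sparse)+\lambda\sum\psi_\tau(\hat\Theta)\le \ell(f^o)+\lambda|\calJ|$. The expected excess loss $\E\ell(f)-\E\ell(f^*)$ is $\frac1n\sum_i\{h^*/h-1-\log(h^*/h)\}(\bx_i)$. Because all functions lie in $[M^{-1},M]$, this is bounded above and below by constants times $\|f-f^*\|_n^2$: on that range $u-1-\log u\asymp (u-1)^2$, and $|f^2-f^{*2}|\asymp|f-f^*|$ since $f,f^*\ge M^{-1}$.

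The stochastic part is $\frac1n\sum_i(\xi_i^{\prime2}-1)h^*(\bx_i)\{1/h(\bx_i)-1/h^o(\bx_i)\}$, where $\xi_i'=y_i/\sqrt{h^*(\bx_i)}\sim N(0,1)$. It is a sub-exponential process linear in $1/h$, and the map $f\mapsto 1/h$ is Lipschitz on the bounded range. Combining Step 1, this Lipschitz bound, and the deviation bound of Step 3, I obtain
\[
\|\hat f^\sparse-f^*\|_n^2+\lambda\sum\psi_\tau(\hat\Theta)\lesssim N^{-4\gamma^*}+\text{(process term)}+\lambda|\calJ|.
\]

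\textbf{Step 3 (uniform deviation, the main obstacle).} I need a bound on the process that holds uniformly over $\phi$ and over all $\Theta\in\reals^{d\times N}$, which is unbounded. This is where $\tau$ enters. Fix $s$. Entries with $|\Theta_{ij}|<\tau$ contribute negligibly: moving them to zero changes $\Tr_M(\Theta^\top\bx)$ by at most $dNK\tau$, and the network is $(BN)^{L+1}$-Lipschitz, so by the choice of $\tau^{-1}$ the total perturbation of $f$ is $\lesssim n^{-3}$. Entries with $|\Theta_{ij}|\ge\tau$ each cost penalty $\lambda$, and their number is $s$. Since the truncation caps inputs at $M$, only entries with $|\Theta_{ij}|\lesssim M$ matter up to the truncation effect, so each can be discretized at resolution $n^{-3}/((BN)^{L+1}K)$.

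The covering number of the resulting class is then at most
\[
\binom{dN}{s}\,(\text{grid})^{s}\times \calN_{\rm NN},\qquad \log\calN_{\rm NN}\lesssim L N^2\log(BNn).
\]
So the entropy is $\lesssim s\log(ndN)+LN^2\log(BN n)$. A Bernstein-type inequality for sub-exponential sums, followed by a union bound over the net and over $s\le dN$, gives with probability $1-n^{-c_5}$ a process bound of
\[
\epsilon\|f-f^o\|_n+\epsilon^2\bigl[s\log(ndN)+N^2L\log(BN)\bigr]/n.
\]
The first term is absorbed by a fraction of $\|f-f^*\|_n^2$. The $s$-part of the second term is absorbed by $\lambda s$, provided $c_2$ is large; this is exactly why $\lambda$ takes the stated form. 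The delicate point is the cross-term: to absorb it, I plan to use a peeling argument, or a Cauchy--Schwarz bound as in \cite{fan2024noise}, over the shells $\|f-f^o\|_n\in[2^k\delta,2^{k+1}\delta]$.

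\textbf{Step 4 (conclude).} Collecting terms gives $\|\hat f^\sparse-f^*\|_n^2\lesssim N^{-4\gamma^*}+N^2\log n/n+\lambda|\calJ|$. With $N\asymp(n/\log n)^{1/(4\gamma^*+2)}$ the first two terms are both of order $(\log n/n)^{2\gamma^*/(2\gamma^*+1)}$, and the third term is of lower order. This yields the claimed bound.
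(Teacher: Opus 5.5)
Your overall architecture matches the paper's proof. It has the same basic inequality against the oracle $(\phi^*,\Theta^*)$ with penalty $\lambda|\calJ|$, and the same lower bound of the expected excess loss by a constant times $\|f-f^*\|_n^2$. The cover of the sparse class is built the same way, with entries below $\tau$ negligible thanks to $\tau^{-1}\ge c_3(BN)^{L+1}Ndn^3$. Your per-net-element concentration bound is the diagonal case of the paper's Hanson--Wright step. The remaining ingredients also match: the $s$-dependent entropy is absorbed by $\lambda s$, a union or peeling handles the sparsity level, and the approximation term is $N^{-4\gamma^*}$. Working directly with $h=f^2+\sigma_\tau^2$ and centering the process at the oracle are harmless, and arguably cleaner than the paper's informal reduction to $\sigma_\tau=0$.

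The step that fails as written is your handling of large entries of $\Theta$: truncation does \emph{not} make entries beyond $O(M)$ irrelevant. A single-entry column gives $\Tr_M(\Theta_{jk}x_{ij})$, which keeps depending on $\Theta_{jk}$ until $|\Theta_{jk}|\ge M/\min_i|x_{ij}|$, and the assumptions do not bound $|x_{ij}|$ away from zero. A column with $\Theta_{1k}=-\Theta_{2k}=T$ gives $\Tr_M(T(x_{i1}-x_{i2}))$, which stays unsaturated for arbitrarily large $T$ at design points with $x_{i1}\approx x_{i2}$. Hence a grid over a bounded range does not cover $\{\Theta\in\reals^{d\times N}\}$, and your entropy bound is not justified for the unconstrained minimization.

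The paper sidesteps this by running the whole argument in $\calG_m$, where $\|\Theta\|_{\max}\le B$ is imposed, and quoting the covering bound of \cite{fan2024factor} (Lemma 7). This tacitly requires $\hat\Theta$ to obey the same constraint. You need one of two fixes:
\begin{itemize}
\item Add $\|\Theta\|_{\max}\le B$ to the minimization.
\item Use a cover that does not grid $\Theta$. Only the values at $\bx_1,\dots,\bx_n$ enter, so you can bound the empirical $\ell_\infty$ covering number of $\{(\Tr_M(\Theta^\top\bx_i))_{i\le n}\}$ through the $O(s)$ pseudo-dimension of truncated $s$-sparse linear maps. At scale $n^{-3}(BN)^{-(L+1)}$ this gives entropy of order $s[\log(ndN)+L\log(BN)]$ with no bound on $\Theta$.
\end{itemize}

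Separately, your displayed process bound is garbled: a term $\epsilon\|f-f^o\|_n$ with constant $\epsilon$ cannot be absorbed into a fraction of $\|f-f^*\|_n^2$. Bernstein plus the union bound give, simultaneously over the net, $\|f-f^o\|_n\sqrt{H_s/n}+H_s/n$, where $H_s$ is the log-cardinality plus $\log n$. Young's inequality turns this into $\epsilon\|f-f^o\|_n^2+(1+\epsilon^{-1})H_s/n$, exactly as in \eqref{eq:ineq-estimation-highdim-2}. So the cross term is not the delicate point. Only the data-dependent sparsity level needs a union or peeling argument, and your union over $s\le dN$ already provides it.
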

	
	\begin{remark}[Population based loss functions for random features]
		
		When the feature vectors $(\bx_1,\dots,\bx_n)$ are sampled from some distribution $\calQ$, it is often of interest to obtain error bounds with respect to the population norm $\norm{\cdot}_{2,\calQ}$ given by
		\begin{align*}
			\|f\|_{2,\calQ}^2
			=\EE[f^2(\bx)]=\int f^2(\bx) d\calQ(\bx).
		\end{align*}
		Then one can convert the error guarantee for the fixed-feature-based norm $\norm{\cdot}_n$ to guarantees on $\|\cdot \|_2$. For the purpose of demonstration, consider the setup of a constant $d$. Then we have the following result.
		\begin{proposition}
			\label{prop:population-empirical-connection}
			Suppose that the feature vectors $\bx_1,\dots,\bx_n$ are drawn \iid from a distribution $\calQ$ on $\reals^d$ and the degree-adjusted smoothness parameter $\gamma^*$ of the data generating function class satisfies $\gamma^*>\frac 12$. Then the neural network $f_{\phihat}$ given in \prettyref{thm:finite-dim} satisfies that, for some constants $c,C>0$,
			$$
			\PP\qth{\|f_\phihat-f^*\|_{2,\calQ}^2\leq 
				C\pth{\|f_\phihat-f^*\|_n^2+\frac {\log n}n}}
			\geq
			1-n^{-c}.
			$$
		\end{proposition}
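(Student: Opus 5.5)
The plan is to prove a \emph{one-sided} uniform comparison between the population and empirical norms over the difference class
$$\calF_n=\{g=f_\phi-f^*: f_\phi\in\calG(L,d,1,N,M,B)\},$$
and then apply it to $g=f_\phihat-f^*$. Because $f_\phihat$ is built from the same $\bx_i$'s, a fixed-function argument is not enough: the comparison must hold for all of $\calF_n$ at once. Only the direction $\|g\|_{2,\calQ}^2\lesssim\|g\|_n^2+\text{remainder}$ is needed, and this lower tail of an average of nonnegative bounded variables is much better behaved than the upper tail. Every $g\in\calF_n$ satisfies $\|g\|_\infty\le 2M$, since networks are truncated at $M$ and $f^*\le f_{\max}<M$.

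\textbf{Step 1 (fixed function).} Put $Z_i=g^2(\bx_i)\in[0,4M^2]$, so that $\EE Z_i=\|g\|_{2,\calQ}^2$ and $\mathrm{Var}(Z_i)\le 4M^2\|g\|_{2,\calQ}^2$. Bernstein's lower-tail inequality for nonnegative variables then gives
$$\PP\bigl(\|g\|_n^2<\tfrac12\|g\|_{2,\calQ}^2\bigr)\le \exp\bigl(-c\,n\|g\|_{2,\calQ}^2/M^2\bigr).$$
Hence for a fixed $g$ with $\|g\|_{2,\calQ}^2\ge t$ the bad event has probability at most $e^{-cnt}$. Choosing $t=C\log n/n$ makes this $n^{-c'}$.

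\textbf{Step 2 (uniformity via a net and peeling).} The parameter-to-function map is Lipschitz in the metric \eqref{eq:metric-neural-net}, with constant roughly $(BN)^{L+1}$. Therefore a $\delta$-net of $\calF_n$ in sup-norm has log-cardinality $H_n\lesssim (N^2L+Nd)\log(BN/\delta)$. I take $\delta=n^{-2}$, so that replacing $g$ by its net point changes both $\|g\|_n^2$ and $\|g\|_{2,\calQ}^2$ by $O(n^{-2})$.

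Next I peel the net into shells $\{2^{k}t\le\|g\|_{2,\calQ}^2<2^{k+1}t\}$. On shell $k$, Step 1 gives failure probability $\exp(H_n-c\,n2^kt)$. The union bound then shows that, with probability at least $1-n^{-c}$,
$$\|g\|_{2,\calQ}^2\le 2\|g\|_n^2+t_n \quad\text{for all } g\in\calF_n,\qquad t_n\asymp \frac{H_n+\log n}{n}.$$
Applying this to $g=f_\phihat-f^*$ gives the claim of \prettyref{prop:population-empirical-connection}, but with $t_n$ in place of $\log n/n$.

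\textbf{Step 3 (the obstacle: reducing $t_n$ to $\log n/n$).} The main difficulty is that the net argument pays the entropy $H_n\asymp N^2\log n$. Under \prettyref{asmp:model-and-hyperparameters}\ref{pt:hyperparam} this gives
$$t_n\asymp (\log n)^{\frac{2\gamma^*}{2\gamma^*+1}}\,n^{-\frac{2\gamma^*}{2\gamma^*+1}},$$
which is exactly the rate of \prettyref{thm:finite-dim} but exceeds $\log n/n$. The assumption $\gamma^*>\tfrac12$ ensures $N^2\ll\sqrt n$, yet that alone does not close the gap. I would attack this in two ways.

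First, I would replace the global net by Mendelson's small-ball (lower-isometry) argument on the star-shaped hull of $\calF_n$. There the remainder is governed by the fixed point of a localized Rademacher complexity. For bounded classes this localized complexity can be much smaller than the global entropy, and I would try to show that at scale $\log n/n$ it is controlled by $\log n$ alone.

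Second, if that fails, I would avoid uniformity altogether. I would use the structure of the estimator in \prettyref{thm:finite-dim}, which is fitted on the $\bx_i$, and apply Step 1 conditionally to a \emph{fixed} near-oracle element $f_{\phi^\circ}$ approximating $f^*$, for which $\log n/n$ suffices directly. The random part $f_\phihat-f_{\phi^\circ}$ would then be handled through the likelihood-difference (basic-inequality) bound from \prettyref{sec:likelihood-diff}. This bound already controls its population counterpart at the rate of \prettyref{thm:finite-dim}, so that part can be absorbed into the term $C\|f_\phihat-f^*\|_n^2$.

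I expect Step 3 to be the critical step and the one where the argument may fail. Steps 1 and 2 are routine.
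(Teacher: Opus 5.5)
Your Steps 1--2 are correct. Bernstein's lower tail for $g^2(\bx_i)\in[0,4M^2]$, a sup-norm net of log-cardinality $H_n\asymp N^2\log n$, and a union bound give, with probability $1-n^{-c}$,
$\|g\|_{2,\calQ}^2\le 2\|g\|_n^2+C(H_n+\log n)/n$ simultaneously for all $g\in\calF_n$, and hence for $g=f_\phihat-f^*$.

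The genuine gap is Step 3, and neither of your routes closes it.

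\textbf{Small-ball route.} This cannot help. For the star hull of a class with $A\asymp N^2$ parameters, the localized-complexity fixed point is again of order $A/n$ up to logarithms. In fact, no argument that is uniform over $\calF_n$ can reach $\log n/n$. Take $d=1$, $\calQ$ uniform on $[0,1]$, $f^*\equiv 1$, and $B^L\gtrsim n$ so that slopes of order $n$ are available. Then the class contains $1+\sum_{j\in S}h_j$, where the $h_j$ are $|S|\asymp N$ height-one triangular bumps supported in sample-free cells of length $1/n$; about $n/e$ such cells exist. This function has empirical error $0$ but population error of order $N/n\gg\log n/n$.

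\textbf{Second route.} This one is circular. The basic inequality of \prettyref{sec:likelihood-diff} is a fixed-design argument and controls only $\|\cdot\|_n$. A population bound on the data-dependent difference $f_\phihat-f_{\phi^\circ}$ therefore needs exactly the uniform comparison you want to avoid. Moreover, even a population bound of order $(n/\log n)^{-2\gamma^*/(2\gamma^*+1)}$ could not be absorbed into $C\|f_\phihat-f^*\|_n^2$, because nothing bounds the empirical error from below.

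You should know that the paper does not get past this obstacle either. It invokes \cite[Theorem 19.3]{gyorfi2002distribution} with $\alpha=c\log n/n$. It then replaces that theorem's hypothesis by the condition $\log\calN\le n^{1/2-\tilde c}$; this is where $\gamma^*>\frac12$ enters.

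The actual hypothesis is an entropy-integral inequality, $c\sqrt n\,\delta\ge\int_{c'\delta}^{\sqrt\delta}\sqrt{\log\calN_2(u,\cdot)}\,du$ for every $\delta\ge\alpha/8$. A uniform entropy bound $H$ makes the right side at most $\sqrt{\delta H}$, so it verifies the hypothesis only when $\alpha\gtrsim H/n$. With the paper's bound this means $\alpha\gtrsim n^{-1/2-\tilde c}$. With the sharp entropy $H\asymp N^2\log n$ it means $\alpha\asymp N^2\log n/n$, which is exactly your $t_n$. The bump example above shows that the uniform conclusion at $\alpha\asymp\log n/n$ is actually false, so this step cannot be repaired.

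Your Step 2 is therefore precisely what the paper's tool legitimately delivers. With probability $1-n^{-c}$ it gives $\|f_\phihat-f^*\|_{2,\calQ}^2\le C\bigl(\|f_\phihat-f^*\|_n^2+(n/\log n)^{-2\gamma^*/(2\gamma^*+1)}\bigr)$. This suffices for the paper's intended conclusion, that the population error attains the rate of \prettyref{thm:finite-dim}, and it does not even need $\gamma^*>\frac12$. But neither your argument nor the paper's proves the stated remainder $\log n/n$.
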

		In view of \prettyref{thm:finite-dim} we conclude that the neural network estimator $f_\phihat$ achieves the nonparametric error rate in terms of the population based error as well. The proof of the result is provided in \prettyref{app:proof-population-loss}. Our proof uses properties of covering number bounds on the neural network class $\calG(L,d,1,N,M,B)$ for uniformly controlling the variations in the estimator $f_\phihat$. Note that one can establish a similar relation between the error $\norm{\cdot}_n$ and $\norm{\cdot}_2$ norms using the arguments as in our proof of \prettyref{prop:population-empirical-connection}, with additional justification to control the covering numbers for the estimated variable selection matrix $\hat \Theta$. We omit the proofs here.
	\end{remark}
	
	\begin{remark}[Comparison of the risk bounds with regular nonparametric estimation guarantees]
		\label{rmk:smoothness}
		In the classical nonparametric regression literature (e.g., see \cite{stone1980optimal,stone1982global}) the primary problem of interest is estimating data generating functions that are $(\beta,C)$-smooth for some constant $\beta$, with features having a finite dimension $d$. In this setup,  $\gamma^*$ can take the value $\beta\over d$, which, under reasonable assumptions on the degree of smoothness $\beta$ (e.g., degree of differentiability $\beta$ of $f^*$ is a large enough constant) can be guaranteed to be nonvanishing as $n$ grows. This ensures that the nonparametric estimation rate established in \prettyref{thm:finite-dim} is nontrivial. On the other hand, when $d$ is growing with $n$, for constant values of $\beta$, we get that $\beta\over d$ can become vanishing. However, under the sparsity assumption $f^*(\bx)=f^*(\bx_{\calJ})$, the dimension adjusted smoothness parameter $\gamma^*$ is bounded from below by at least $\beta\over |\calJ|$, which ensures that the rate of estimation in \prettyref{thm:sparse} is vanishing whenever $|\calJ|$ is of constant order and $\beta$ is at least a large constant.

	\end{remark}
	
	\begin{remark}[Discussing the error upper bounds]
			The above results, \prettyref{thm:finite-dim} and \prettyref{thm:sparse}, establish minimax guarantees for deep neural network estimators under the multiplicative speckle noise model in both the low-dimensional and sparse settings. In the low-dimensional case, the proposed estimator achieves the convergence rate that coincides, up to logarithmic factors, with the minimax optimal rate for estimating hierarchical functions under the classical additive noise model  \cite{kohler2021rate}. In the sparse setting, where the regression function depends only on an unknown subset of the features, the $\ell_1$-penalized estimator automatically adapts to the underlying sparsity structure and achieves the same convergence rate even when the ambient dimension grows with the sample size. Thus, the proposed estimator avoids the curse of dimensionality by exploiting the intrinsic low-dimensional structure of the target function. These results demonstrate that, although the multiplicative speckle noise model requires substantially different analytical techniques than the additive noise model, it does not incur any loss in statistical efficiency for estimating the underlying signal.
		\end{remark}
		
		\begin{remark}[Effects of the additive noise level on our estimator construction]
			Note that a nonvanishing value of $\sigma_\tau$, which enters in the negative log-likelihood based objective function \eqref{eq:likelihood-full} as $f^{*2}(\cdot)+\sigma_\tau^2$ in the denominator, prevents the objective functions from becoming ill-conditioned and improves the numerical stability of the optimization algorithm. Our restriction for the theoretical analysis, which assumes that $\sigma_\tau$ is a constant, is primarily technical and it is connected to using existing approximation results for deep neural networks that are established for finite target functions. Extending these approximation guarantees to accommodate arbitrary values of $\sigma_\tau$ is left for future work.
			
		\end{remark}

	\subsection{Impossibility result for signal estimation under speckle noise}

	Next, we present a lower bound on the estimation error for functional estimation in multiplicative noise setting, that shows that the function recovery guarantees for our deep learning estimators are minimax optimal, up to logarithmic factors. To this end, we consider the data generating function class to be $\calH(d,l,\calP)$, with $\calP$ having the hardness parameter $\gamma^*$ defined in \eqref{eq:gamma*}. We make the following additional assumption that ensures that the feature vectors $(\bx_1,\dots,\bx_n)$ are well distributed throughout the domain.
	
	\begin{myassumption}
		\label{asmp:riemann-approx-revised}
		
		The feature vectors are supported on a bounded domain, which, without loss of generality, we take to be $[0,1]^d$. The deterministic design points $(\bx_1,\ldots,\bx_n)$ satisfy
		\[
		\sup_{f\in\mathcal H(d,l,\mathcal P)}
		\left|
		\frac1n\sum_{i=1}^n f^2(\bx_i)
		-
		\int_{[0,1]^d}f^2(\bx)\,d\bx
		\right|
		=o(1).
		\]
		
	\end{myassumption}
	
	\begin{remark}[Explanation of the assumption]
		The above assumption is satisfied in many standard settings. For example, if the design points form a regular Cartesian grid over $[0,1]^d$, then the empirical average is a Riemann sum, and the assumption follows for every continuous function, and hence for every $f\in\mathcal H(d,l,\mathcal P)$.
	\end{remark}

	In view of the above assumption, we establish the following minimax lower bound.
	
	\begin{theorem}[Minimax lower bound for nonparametric estimation under speckle noise]
		\label{thm:speckle-lb}
		Consider fixed $\bx_1,\dots,\bx_n$ satisfying \prettyref{asmp:riemann-approx-revised} and independent samples $\{(\bx_i,y_i)\}_{i=1}^n$ from the model \eqref{eq:model} with $\sigma_\tau=O(1)$ and $f^*:\reals^{d}\mapsto \reals^+$. Suppose that $\calJ$ denotes the set of active coordinates in $\bx$ (we assume $\calJ=\{1,\dots,d\}$ in the non-sparse analysis) and $f^*(\bx)=f^*(\bx_\calJ)\in \calH(|\calJ|,l,\calP)$, where $\calH(|\calJ|,l,\calP)$ has the smoothness parameter $\gamma^*$ as in \eqref{eq:gamma*}. Then, there exist constants $c_1>0,p^*\in (0,1)$ such that for all  $n$
		\begin{align*}
			\inf_{\hat f} \sup_{f^*(\bx_{\calJ})\in \calH(|\calJ|,l,\calP)} \PP\qth{\|\hat f-f^*\|_n^2 \ge c_1 n^{-\frac{2\gamma^*}{2\gamma^*+1}}}>p^*.
		\end{align*}
	\end{theorem}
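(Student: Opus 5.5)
We will prove the lower bound by the standard Fano/Varshamov--Gilbert route, after reducing the hierarchical class to a single $(\beta^*,C)$-smooth function of $d^*$ variables. Let $(\beta^*,d^*)$ attain \eqref{eq:gamma*}. A member of $\calH(|\calJ|,l,\calP)$ can be built as follows. The inner layers are coordinate-selecting maps (linear, hence smooth of every order), and one layer is a $d^*$-variate $(\beta^*,C)$-smooth function $g$. This $g$ is added to a positive constant $f_0\in(f_{\min},f_{\max})$ so that every hypothesis stays bounded away from zero. The remaining outer layers are identities. It therefore suffices to lower bound the minimax risk over the family $f_\omega(\bx)=f_0+g_\omega(x_1,\dots,x_{d^*})$, which lies in $\calH(|\calJ|,l,\calP)$ for a suitable constant $C$.

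\textbf{Construction.} Fix a bump $K$ that is $C^\infty$ with support in $[0,1]^{d^*}$. Set $h=c\,n^{-1/(2\beta^*+d^*)}$ and partition $[0,1]^{d^*}$ into $m=h^{-d^*}$ cubes with corners $z_j$. For $\omega\in\{0,1\}^m$ define
\[
g_\omega(\bz)=\sum_{j=1}^m \omega_j h^{\beta^*}K\bigl((\bz-z_j)/h\bigr).
\]
Each $g_\omega$ is $(\beta^*,C)$-smooth with $\|g_\omega\|_\infty\lesssim h^{\beta^*}\to 0$, so $f_\omega\in[f_{\min},f_{\max}]$. By Varshamov--Gilbert there are $\omega^{(0)},\dots,\omega^{(J)}$ with $J\ge 2^{m/8}$ and pairwise Hamming distance $\rho\ge m/8$. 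By Assumption~\ref{asmp:riemann-approx-revised} applied to $(f_\omega-f_{\omega'})^2$, together with direct integration,
\[
\|f_\omega-f_{\omega'}\|_n^2\ge \tfrac12\int (g_\omega-g_{\omega'})^2 \gtrsim \rho(\omega,\omega')\,h^{2\beta^*+d^*}\gtrsim h^{2\beta^*}=n^{-2\gamma^*/(2\gamma^*+1)}.
\]
This requires the $o(1)$ error in the assumption to be negligible relative to $h^{2\beta^*}$. This is the delicate point: I would use the differences, which remain of the form $f^2$ up to scaling, or else work directly on the grid case.

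\textbf{KL control.} Under model \eqref{eq:model} with fixed $\bx_i$, $y_i\sim N(0,f(\bx_i)^2+\sigma_\tau^2)$. For $v=f_\omega^2+\sigma_\tau^2$ and $v'=f_{\omega'}^2+\sigma_\tau^2$, the Gaussian KL is
\[
\tfrac12\bigl(v/v'-1-\log(v/v')\bigr)\lesssim (v-v')^2/v'^2 .
\]
Because all $f$ lie in $[f_{\min},f_{\max}]$, we have $(v-v')^2\le (2f_{\max})^2(f_\omega-f_{\omega'})^2$ and $v'\ge f_{\min}^2$. Summing over $i$ and using the assumption again gives
\[
\mathrm{KL}\lesssim n\|f_\omega-f_{\omega'}\|_n^2\lesssim n\,m\,h^{2\beta^*+d^*}=c^{2\beta^*+d^*}m,
\]
with constants depending on $f_{\min}$, $f_{\max}$ and $\sigma_\tau$. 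Choosing $c$ small makes this at most $\alpha\log J$ with $\alpha<1/8$. Note that the lower bound $f_0>f_{\min}>0$ is what keeps the speckle variance ratio well conditioned; it is also why the rate does not degrade.

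\textbf{Conclusion and obstacle.} Fano's inequality (Tsybakov, Thm.~2.5) then gives
\[
\inf_{\hat f}\max_k \PP_k\bigl(\|\hat f-f_{\omega^{(k)}}\|_n\ge \tfrac12\min_{k\ne k'}\|f_{\omega^{(k)}}-f_{\omega^{(k')}}\|_n\bigr)\ge p^*>0 .
\]
Since $\gamma^*=\beta^*/d^*$, this yields the stated rate. The main obstacle is the discretization step. Assumption~\ref{asmp:riemann-approx-revised} only gives a uniform $o(1)$ error, not $o(h^{2\beta^*})$, so translating both the separation and the KL bound from integrals to empirical norms needs care. I would rescale the class: the differences $(f_\omega-f_{\omega'})/h^{\beta^*}$ are still smooth with bounded constants, so the assumption applied to the rescaled class controls the relative error. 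A second, minor step is verifying that the embedding into $\calH(|\calJ|,l,\calP)$ is legitimate, namely that the identity and coordinate maps satisfy the $(\beta,C)$-smoothness used in Definition~\ref{def:hierarchical}.
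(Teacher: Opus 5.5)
Your Fano/KL machinery is sound, but the separation step is not established, and the fix you propose for it does not work. The rescaled difference $(f_\omega-f_{\omega'})/h^{\beta^*}=\sum_j(\omega_j-\omega'_j)K((\cdot-z_j)/h)$ is not smooth with bounded constants. The $s$-H\"older seminorm of its order-$r$ derivatives is of order $h^{-r-s}=h^{-\beta^*}\to\infty$. So it lies outside $\calH(|\calJ|,l,\calP)$ for the fixed $C$, and Assumption~\ref{asmp:riemann-approx-revised} says nothing about it. Applied to the unrescaled difference, which is in the class, the assumption only gives an additive $o(1)$ error. That error swamps $\int(g_\omega-g_{\omega'})^2\asymp h^{2\beta^*}\to 0$.

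What separation actually requires is a local property of the design at scale $h$. For example, you need $\frac1n\sum_i K^2((\bx_i-z_j)/h)=h^{d^*}(\|K\|_2^2+o(1))$ for every cell, or at least $\gtrsim h^{d^*}$ on a constant fraction of cells, on which you then build the hypercube. The paper does not derive this from the uniform condition either. It asserts the per-cube approximation $\frac1n\sum_i g_{n,l}^2(\bx_i)=\delta_n^{2\beta^*+d^*}(\|g\|_2^2+o(1))$, appealing to $n\delta_n^{d^*}\to\infty$; this is a Riemann-sum statement for grid-like designs, essentially your fallback. So you should use (or assume) that local approximation directly rather than the rescaling argument.

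The KL bound, by contrast, needs no design assumption at all: $\sum_i(f_\omega-f_{\omega'})^2(\bx_i)\le n\|g_\omega-g_{\omega'}\|_\infty^2\le nh^{2\beta^*}\|K\|_\infty^2=c^{2\beta^*+d^*}\|K\|_\infty^2\,m$. As written, ``using the assumption again'' would only give $n\cdot o(1)$, which is useless.

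With that repaired, your route differs from the paper's only in the information-theoretic tool. The paper uses the same reduction to one $d^*$-variate $(\beta^*,C)$-smooth function. Its hypotheses are essentially yours: the constant $1$ minus bumps at scale $\asymp n^{-1/(2\beta^*+d^*)}$, indexed by Gilbert--Varshamov codewords with Hamming separation $m/16$. Instead of KL plus Fano, it applies the likelihood-ratio criterion of Lemma~\ref{lem:lowerbound-korostelev}. It verifies $\PP_{\nu_l}(\Lambda(\nu_0,\nu_l)>\tilde M^{-\lambda_l})>p^*$ by Taylor-expanding the log-likelihood ratio, controlling the quadratic terms with Laurent--Massart, and using a non-i.i.d.\ Berry--Esseen bound.

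Your Fano argument is shorter. It needs only $v/v'-1-\log(v/v')\lesssim(v-v')^2$ for variances in a compact subset of $(0,\infty)$, which is exactly where $f\ge f_{\min}$ enters. The paper's route tracks the distribution of the likelihood ratio explicitly, at the cost of considerably more computation. Both depend on the same local design property for the separation.
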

	The proof of the above result is presented in \prettyref{sec:proof-lower-bound}.
	
	\begin{remark}[Optimality of the DNN-estimators]
		The above results show that the proposed DNN estimators achieve estimation rates that are within logarithmic factors (in the sample size $n$) of the minimax-optimal thresholds. Consequently, the statistical difficulty of nonparametric estimation under multiplicative speckle noise is essentially the same as that under the corresponding additive noise model. The additional logarithmic factors appearing in the upper bounds of \prettyref{thm:finite-dim} and \prettyref{thm:sparse} are standard artifacts in the analysis of deep neural network estimators and arise from controlling the complexity of the DNN function class \cite{kohler2021rate}. It is plausible that these logarithmic factors can be removed, or at least further reduced, by employing more classical nonparametric estimators, such as local polynomial estimators, whose analysis can often be carried out more precisely. However, such an investigation is beyond the scope of the present work, as our primary objective is to characterize the statistical limits of despeckling using DNN estimators.
	\end{remark}
	
	\begin{remark}[Dependency of the minimax rates on the additive noise level]
		The existing literature indicates that the minimax risk for estimating $f^*$ is a nondecreasing function of the additive noise level $\sigma_\tau$, i.e., larger noise makes the estimation problem intrinsically more difficult. The proof follows the same argument as \cite[Lemma 3.1]{malekian2025speckle}; see also \cite[Lemma 4.2]{xing2025minimax}, and is therefore omitted. Consequently, we expect the estimation error bounds for the proposed DNN estimators to deteriorate as $\sigma_\tau$ increases. Since our current analysis is restricted to the case where $\sigma_\tau$ is bounded by a constant, a precise characterization of the dependence of the estimation error on $\sigma_\tau$ is left for future work.
		
	\end{remark}

	\section{Numerical studies}
	
	\label{sec:simulation}
	
	\subsection{Network training}
	
	\paragraph
	{Neural network hyperparameters}
	To estimate the true function, we employed the deep neural network construction prescribed in \prettyref{thm:finite-dim} and \prettyref{thm:sparse}. The loss function is optimized over the neural network class $\calG(L,d,1,N,M,B)$, where the data dimension $d$, number of hidden layers $L$, and the number of nodes $N$ in each hidden layer will be specified on a case by case basis. In the sparsity analysis, the number of hidden layers will also serve as the dimension of the sparsity inducing matrix $\Theta\in \reals^{d\times N}$. The truncation parameter $M$ is set at 9999 to accommodate for possible large values of the target functional. Although the chosen value of $M$ is sufficiently large that truncation rarely becomes active, it guarantees bounded intermediate representations as suggested in our theory.
	
	\paragraph{Training Procedure}
	The network parameters, including both the projection matrix $\Theta$ (in the sparsity based analysis only) and the weights of the deep neural network, were optimized jointly using the Adam optimizer with learning rate $10^{-3}$.  We chose the regularization coefficient $\lambda$ as
	$
	\lambda
	=
	\frac{\log(ndN)}{n},
	$
	which is the dominating term in the prescribed theoretical value in \prettyref{thm:sparse}. We fix the threshold parameter $\tau$ at $10^{-3}$. Training was performed using mini-batches of size 64. In the case of sparsity analysis, $\Theta$ was initialized with independent Gaussian entries scaled by 0.01.
	
	
	\subsection{Function estimation in multiple dimensions}
	\paragraph{Data generation} We show that our algorithm can effectively detect the sparse locations in the feature vector that construct the data generating signal. 
	We considered a multivariate nonlinear regression setting with sample sizes $n\in \{1000,1500,\dots,7000\}$ and number of features $d=6$. For each replication, the feature vectors \(\bx_i=(x_{i1},\ldots,x_{i6})^\top\,i=1,\ldots,n\), were generated independently from the uniform distribution on \([0.1,1]^{6}\). The response variable followed the model in \eqref{eq:model} with $\sigma_\tau=1$. We choose the data generating true function $f^*$ as
	\begin{align}
		\label{eq:m17}
		f^*(\bx)
		=
		1
		+
		x_1
		+
		\log |x_2+0.5|
		+
		\cos(x_3)
		+
		(x_4x_5)^2
		+
		x_6^6.
	\end{align}
	to incorporate several distinct nonlinear effects, including logarithmic, trigonometric, interaction, and higher-order polynomial components. 
	
	\begin{figure}[t]
		\centering
		\begin{subfigure}[t]{0.49\linewidth}
			\caption{MSE plot against sample sizes}
			\includegraphics[width=0.9\linewidth]{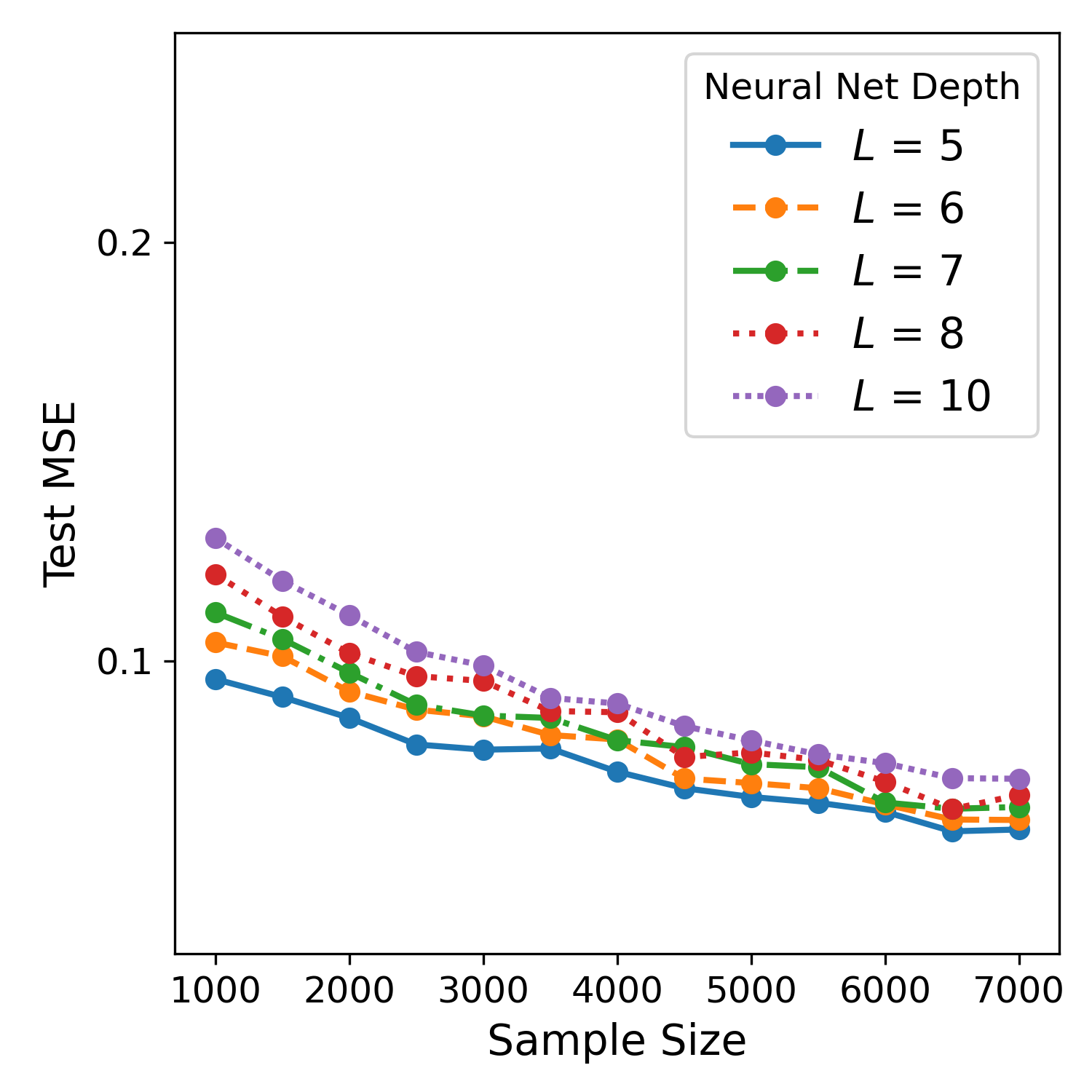}
			\label{fig:mse_vs_samples}
		\end{subfigure}
		\hfill
		\begin{subfigure}[t]{0.49\linewidth}
			\caption{MSE plot against epochs}
			
			\includegraphics[width=0.9\linewidth]{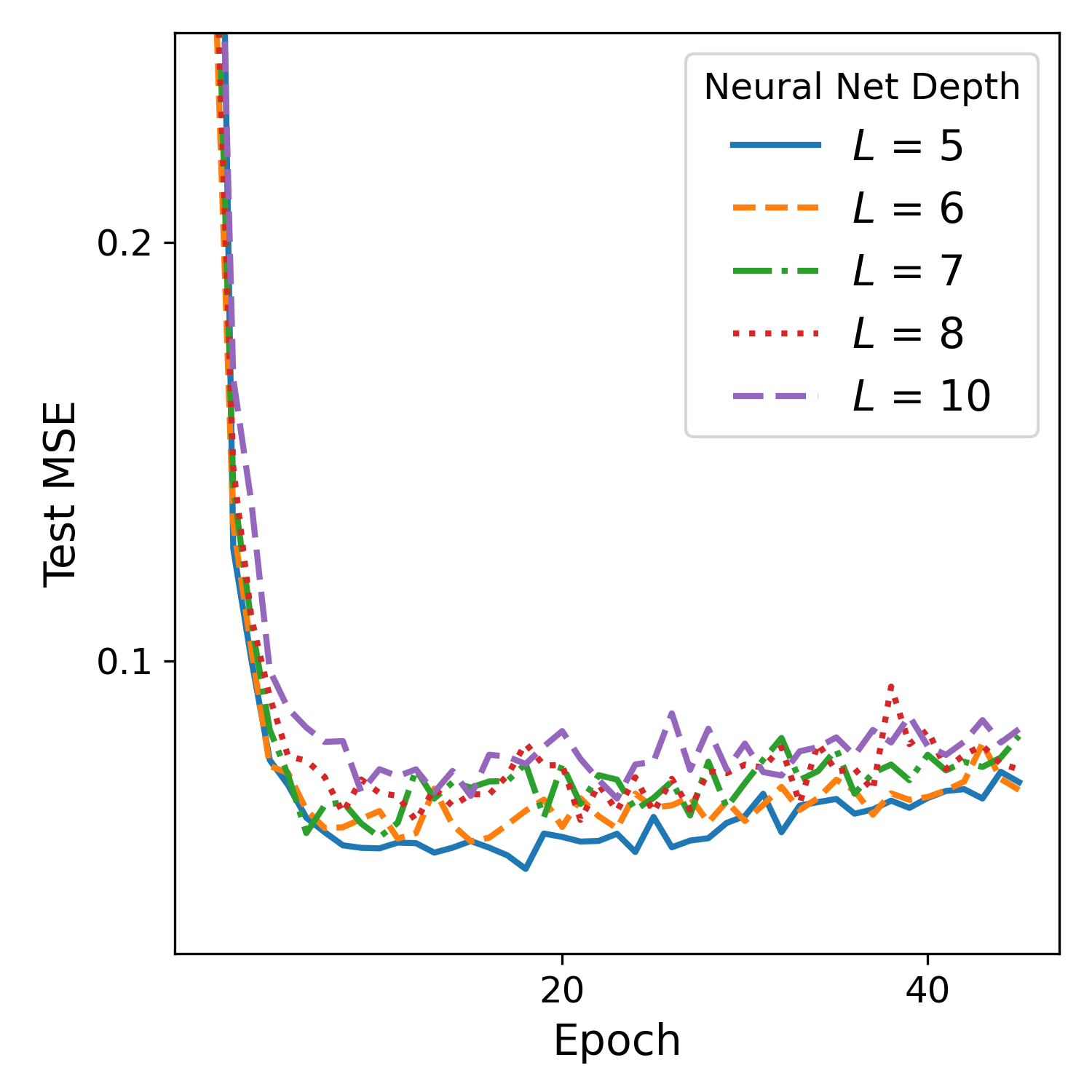}
			\label{fig:mse_vs_epoch}
		\end{subfigure}
		\caption{Comparison of error across different depths with fixed width 50. (a) Test MSE curves across different sample sizes and 50 epochs. (b) A comparison of Test  MSE values as the epoch increases, and sample size fixed at 4000.}
		\label{fig:mse}
	\end{figure}

	\paragraph{Experiment details} To standardize the comparison of neural networks, we fixed the network width at $N=50$ and used a baseline depth of $L=5$. The choice of $N=50$ was based on preliminary hyperparameter tuning, which indicated that increasing the width beyond 50 produced negligible improvements in predictive performance while incurring additional computational cost.
	
	In the first experiment presented in \prettyref{fig:mse_vs_samples}, we varied the network depth $L$ over the set $\sth{5,6,7,8,10}$. For each network configuration, we performed 500 independent Monte Carlo replications. In each replication, the network was trained for 50 epochs, and the mean squared error (MSE) with respect to the true data-generating function was computed. The choice of 50 training epochs was motivated by preliminary experiments, which showed that the training error decreased rapidly during the first 20--30 epochs and then exhibited only marginal improvements thereafter. We therefore used 50 epochs to ensure convergence while allowing training to continue beyond the point at which the error had largely stabilized. The MSE values were then averaged across the replications to obtain the Average MSE, which was plotted against different network depths and sample sizes. The results show that the performance differences among the various network depths diminish as the sample size increases, indicating that learning improves consistently across all architectures.
	
	In the second experiment, we fix the sample size to 4000 and the network width at 50 while varying the network depth to examine the evolution of the test error during training. We plot the average test MSE over 400 Monte Carlo replications as a function of the number of training epochs. As shown in \prettyref{fig:mse_vs_epoch}, the test MSE decreases rapidly during the initial epochs, reaches its lowest value after approximately 10--20 epochs, and then increases slightly, indicating mild overfitting with prolonged training. Importantly, this behavior is consistent across all network depths, suggesting that the networks exhibit similar generalization performance regardless of depth.

	\subsection{Active coordinate detection with high-dimensional features}
	
	\paragraph{Data generation}
	We demonstrate that our algorithm can effectively identify the sparse locations in the feature vector that generate the underlying signal. We consider a high-dimensional nonlinear regression setting with $n=4000$ observations and $d=400$ features. For each Monte Carlo replication, the feature vectors $\bx_i=(x_{i1},\ldots,x_{id})^\top$, $i=1,\ldots,n$, were generated independently from the uniform distribution on $[0.1,1]^{400}$. The response variable followed the model in \eqref{eq:model} with $\sigma_\tau=1$. We chose the true data-generating function $f^*$ as in \eqref{eq:m17}, which incorporates diverse nonlinear effects, including logarithmic, trigonometric, interaction, and higher-order polynomial terms. Importantly, only the first six covariates contributed to the response, whereas the remaining 394 variables were irrelevant noise features. To investigate the effect of a stronger signal on variable selection, we also considered the amplified signal
	$
	f^{**}(\bx)
	=4+10\left(x_1+\log|x_2+0.5|+\cos(x_3)+(x_4x_5)^2+x_6^6\right).
	$
	
	\begin{figure}[t]
		\centering
		
		\begin{subfigure}[t]{\textwidth}
			\centering
			
			\caption{Average estimated $\Theta^\top$ with $f^*(\bx)
				=
				1
				+x_1
				+
				\log |x_2+0.5|
				+
				\cos(x_3)
				+
				(x_4x_5)^2
				+
				x_6^6$.}\includegraphics[width=\linewidth]{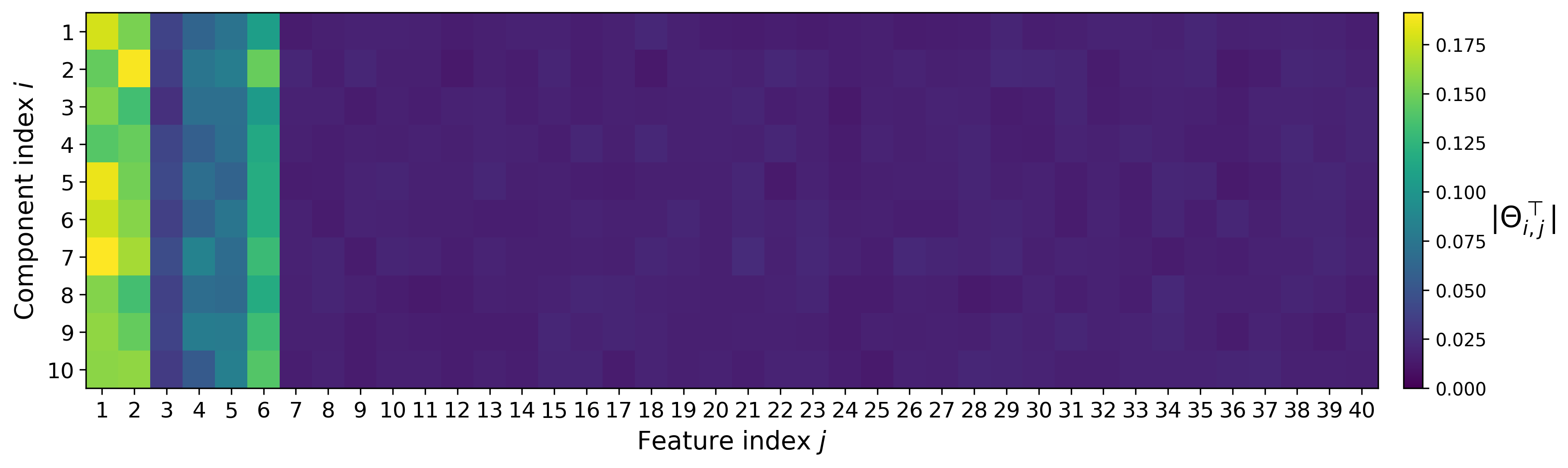}
			\label{fig:importance_heatmap}
		\end{subfigure}
		
		\begin{subfigure}[t]{\textwidth}
			\centering
			
			\caption{Comparison for $\Theta^\top$ against amplified signal $f^{**}(\bx)
				=
				4
				+
				10(x_1
				+
				\log |x_2+0.5|
				+
				\cos(x_3)
				+
				(x_4x_5)^2
				+			x_6^6)$.}\includegraphics[width=\linewidth]{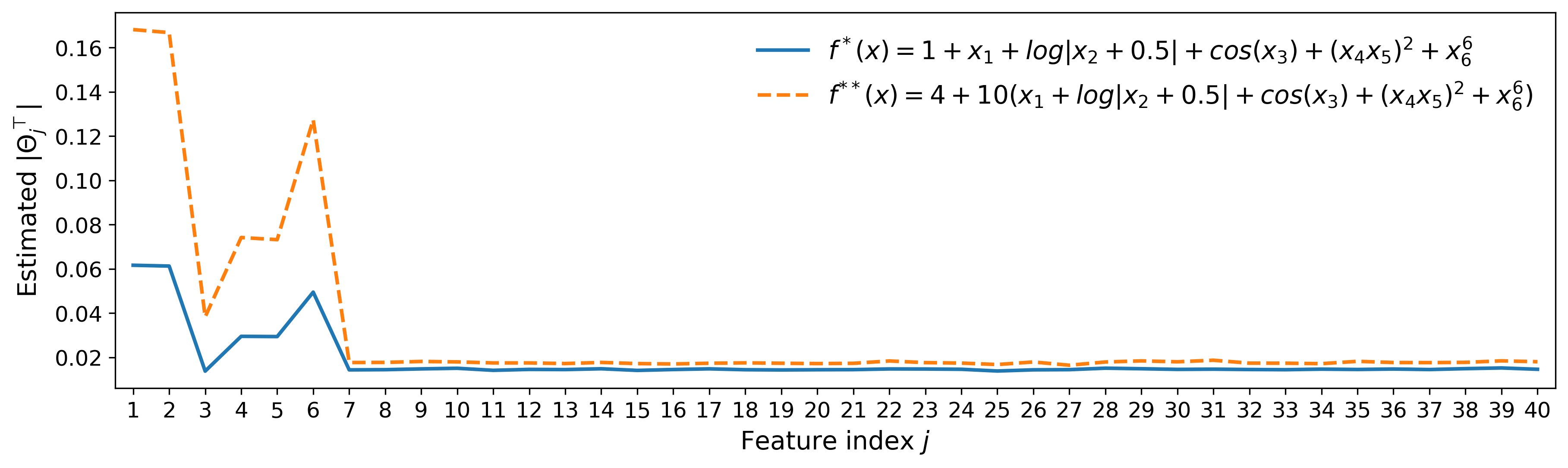}
			\label{fig:importance_second}
		\end{subfigure}
		
		\caption{Visualization of the estimated feature selection matrix $\Theta^\top$, showing that the DNN estimator efficiently detects the active feature coordinates. Regions with lighter shades correspond to larger entries in estimated $\Theta^\top$. (a) Heatmap of the average estimated entries for the first 40 features and first 10 rows. (b) A comparison of averaged estimated $|\Theta_j^\top|$ values for different features $j$ against an amplified signal.}
		\label{fig:importance}
	\end{figure}

	\paragraph{Experiment details}
	The experiment was repeated over 200 independent Monte Carlo replications. For each replication, we fixed the network depth and width at $L=5$ and $N=50$, respectively, and trained the network for 100 epochs. Compared with the low-dimensional experiments, we used a larger number of training epochs because optimization becomes more challenging in the high-dimensional setting, requiring additional iterations for the network to adequately learn the underlying structure. For each replication, the estimated projection matrix $\widehat{\Theta}$ was recorded, and the average absolute projection weights were computed across all replications. These averaged absolute weights serve as a measure of variable importance and were used to assess the ability of the sparse projection layer to recover the truly relevant predictors. The results in \prettyref{fig:importance} show that the largest projection weights are concentrated on the first six features, which coincide with the true active coordinates in the data-generating function \eqref{eq:m17}. This demonstrates that the proposed method successfully identifies the relevant feature locations despite the presence of a large number of irrelevant covariates. Furthermore, when the signal is amplified to $f^{**}(\bx)$, the distinction between the active and inactive features becomes even more pronounced, resulting in clearer identification of the relevant variables.

	\section{Proof overview of \prettyref{thm:finite-dim} via quantifying likelihood differences}
	\label{sec:likelihood-diff}

	We present an overview of the proof of \prettyref{thm:finite-dim} here. Our proof relies on utilizing the fact that the log-likelihood for the maximum likelihood estimator is higher than that of the true data generating function, and we convert the difference between the corresponding log-likelihood values to a squared error distance between the true and estimated regression functions. Before presenting the details we  note that, under the assumption of a known $\sigma_\tau=O(1)$, the proof remains essentially the same when we consider the special case $\sigma_\tau =0$, and establish risk guarantees for the estimator based on the revised negative log-likelihood
	\begin{align}
		\label{eq:likelihood-reduced}
		\ell(f)
		=
		\frac 1{n} \sum_{i=1}^n \frac{y_i^2}{\{f(\bx_i)\}^2}
		+
		\frac 1n \sum_{i=1}^n \log\bigl(\{f(\bx_i)\}^2\bigr).
	\end{align}
	This follows as for a general $\sigma_{\tau}$, the optimizer $\hat f$ of \eqref{eq:likelihood-reduced} guarantees that $\hat f^2$ is consistent for estimating $f^{*2}+\sigma_\tau^2$, and then $f^*$ can be estimated using $\sqrt{\hat f^2-\sigma_\tau^2}$. As $f^{*2}+\sigma_\tau^2$ has the same smoothness as $f^{*2}$, the theoretical analysis for the optimizer of \eqref{eq:likelihood-reduced} is similar to the analysis of the optimizer of \eqref{eq:likelihood-full}. Hence, for the rest of the paper we present the error guarantees for deep learning optimizer of \eqref{eq:likelihood-reduced}, under the model without additive noise (i.e., $\sigma_\tau=0$)
	\begin{align}
		\label{eq:model-reduced}
		y_i = f^*(\bx_i)\xi_i,
		\quad 
		i=1,\dots,n,\quad
		\xi_i\simiid N(0,1).
	\end{align} 
	
	Now we present the high-level argument for proving the risk guarantee of the DNN optimizer of \eqref{eq:likelihood-reduced}. By taking expectation of the likelihood \eqref{eq:likelihood-reduced} with respect to the model \eqref{eq:model-reduced}, we get
	
	\begin{align}
		\label{eq:expected_likelihood}
		\bar \ell (f)
		=
		\EE\qth{\ell(f)\mid \bx_1,\dots,\bx_n}
		&=
		\frac 1{n} \sum_{i=1}^n \frac{\{f^*(\bx_i)\}^2}{\{f(\bx_i)\}^2}
		+
		\frac 1n \sum_{i=1}^n \log\bigl((f(\bx_i))^2\bigr).
	\end{align}
	Using $f=f^*$, the last display implies
	\begin{align}
		\label{eq:cond_expectation}
		\bar \ell (f^*)
		=
		\EE\qth{\ell(f^*)\mid \bx_1,\dots,\bx_n}
		&=
		1 + \frac 1n \sum_{i=1}^n \log\bigl(f^*(\bx_i)^2\bigr).
	\end{align}
	From the above display, we get that for any function $f$, we have
	\begin{align}
		\label{eq:expected-likelihood-diff}
		\bar\ell(f)
		-\bar\ell(f^*)
		=
		\frac 1n \sum_{i=1}^n
		\left(
		\frac{\{f^*(\bx_i)\}^2}{\{ f(\bx_i)\}^2}-1
		-\log \left(\frac{f^*(\bx_i)^2}{ f(\bx_i)^2}\right)
		\right)
	\end{align}
	Reordering the terms in the expansion of \eqref{eq:likelihood-reduced}, for any function $f$ we get
	\begin{align}
		\ell( f) - \ell(f^*)
		&=
		\frac{1}{n} \sum_{i=1}^n
		\left(
		y_i^2 \left( \frac{1}{ f(\bx_i)^2} - \frac{1}{f^*(\bx_i)^2} \right)
		\right)
		+
		\frac 1n \sum_{i=1}^n
		\log\!\left(
		\frac{ f(\bx_i)^2}{f^*(\bx_i)^2}
		\right)
		\label{eq:main_expansion}
		\\
		&=
		\frac{1}{n} \sum_{i=1}^n
		\left(
		y_i^2 \left(\frac{1}{ f(\bx_i)^2} - \frac{1}{f^*(\bx_i)^2}\right)
		-
		f^*(\bx_i)^2 \left( \frac{1}{ f(\bx_i)^2} - \frac{1}{f^*(\bx_i)^2} \right)
		\right)
		\notag \\
		&\quad
		+
		\frac 1{n} \sum_{i=1}^n \pth{\frac{\{f^*(\bx_i)\}^2}{\{ f(\bx_i)\}^2}
		- 1- \log\Bigl({f^*(\bx_i)^2\over f(\bx_i)^2}\Bigr)}
		\notag.
	\end{align}
	We use \eqref{eq:expected-likelihood-diff} and a quadratic formulation to rewrite the last display as
	\begin{align}
		\label{eq:likelihood-difference}
		\bar \ell(f)-\bar\ell(f^*)
		=
		{
			\by^\top (\Sigma^*-\Sigma_f) \by
			-
			\E \big[ \by^\top (\Sigma^*-\Sigma_f) \by \big]
			\over n}
		+
		\ell(f) - \ell(f^*),
	\end{align}
	where
	\begin{align}
		\label{eq:definitions}
		\by = [y_1,\dots,y_n], \quad
		\Sigma_f = \diag\!\left(
		\frac{1}{f(\bx_1)^2}, \dots, \frac{1}{f(\bx_n)^2}
		\right), \quad
		\hat\Sigma = \Sigma_{\hat f}, \quad
		\Sigma^* = \Sigma_{f^*}.
	\end{align}
	
	Consider the parametrization $f_\phi$ of $\calG(L,d,1,N,M,B)$ via its set of weights and biases $\phi$. We choose $f_{\phistar}$ as the best DNN from $\calG(L,d,1,N,M,B)$ closest to the true data generating function $f^*$. In view of the following result we note that $\phistar$ can be chosen in such a way that
	\begin{align}
		\label{eq:approx-error}
		\|f_{\phistar}-f^*\|_\infty < N^{-2\gamma^*},
	\end{align}
	where $\gamma^*$ is the degree adjusted smoothness defined in \prettyref{def:dim-adjusted-smoothness}. 
	\begin{lemma}\cite[Approximating $\calH(d, l,\calP)$ via deep \relu Networks]{fan2024factor}
		\label{lmm:approx-smooth}
		Let $g$ be a $d$-variate, $(\beta, C)$-smooth function. There exists some universal constants $c_1$--$c_5$ depending only on $d, \beta, C$, such that for arbitrary $N\in \mathbb{N}^+ \setminus \{1\}$, there exists a deep \relu network $g^\dagger \in \mathcal{G}(c_1, d, 1, c_2 N, \infty, c_3 N^{c_4})$ satisfying $\|g^\dagger-g\|_{\infty,[0,1]^d} \le c_5 N^{-2\beta/d}$.
		Furthermore, if $g\in \mathcal{H}(d, l,\mathcal{P})$ with $\sup_{(\beta, t)\in \mathcal{P}} (\beta \lor t) <\infty$ and $g$ is supported on $[-c_6, c_6]^d$ for some constant $c_6$. There also exists some universal constants $c_{7}$--$c_{11}$ such that for arbitrary $N\in \mathbb{N}^+ \setminus \{1\}$, there exists a deep \relu network $g^\dagger \in \mathcal{G}(c_7, d, 1, c_8 N, \infty, c_9 N^{c_{10}})$ satisfying $\|g^\dagger-g\|_{\infty,[-c_{6},c_6]^d} \le c_{11} N^{-2\gamma^*}$.
	\end{lemma}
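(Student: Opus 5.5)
The statement to prove is \prettyref{lmm:approx-smooth}, the approximation result quoted from \cite{fan2024factor}. I would prove it in two stages: first the single smooth function, then hierarchical compositions by induction on the depth $l$.

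\textbf{Single $(\beta,C)$-smooth function on $[0,1]^d$.} I would use the standard localized Taylor-polynomial construction of \cite{yarotsky2017error} and \cite{kohler2021rate}.

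1. Partition $[0,1]^d$ into $K^d$ cubes of side $1/K$, with $K \asymp N^{2/d}$, so that the number of cubes is $K^d \asymp N^2$.
2. On each cube, approximate $g$ by its Taylor polynomial of degree $r=\lceil\beta\rceil-1$ at the cube center. The H\"older condition in \prettyref{def:beta-C-smooth} gives a uniform local error of order $C K^{-\beta} \asymp N^{-2\beta/d}$.
3. Implement the piecewise polynomial with a ReLU network of constant depth and width $O(N)$. The ingredients are:
   \begin{itemize}
   \item bit-extraction or point-fitting networks that map $\bx$ to the index of its cube and output the stored Taylor coefficients; this is where width $N$ encodes $N^2$ values, the depth-dependent gain from \cite{yarotsky2017error};
   \item the sawtooth approximation of $x\mapsto x^2$, and hence of products, which builds monomials of degree at most $r$ to accuracy $N^{-2\beta/d}$ with constant depth and width $O(N)$;
   \item a partition-of-unity or trapezoid weighting, which removes errors at cube boundaries.
   \end{itemize}
4. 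Track weights through the construction. They grow at most polynomially in $K$ and in the inverse target accuracy, which gives the bound $c_3N^{c_4}$.

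\textbf{Hierarchical class $\calH(d,l,\calP)$.} I would argue by induction on $l$.

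1. For $l=1$, $g(\bx)=h(\bx_\calJ)$ with $h$ a $t$-variate $(\beta,C)$-smooth function. After rescaling $[-c_6,c_6]^t$ to $[0,1]^t$, which changes only constants, the first stage gives error $N^{-2\beta/t}\le N^{-2\gamma^*}$, by the definition of $\gamma^*$ in \eqref{eq:gamma*}.
2. For the inductive step, write $g=h(f_1,\dots,f_t)$. The inner functions $f_i$ are continuous on a compact set, so they map into a bounded interval. Approximate each $f_i$ by a network $f_i^\dagger$ and $h$ by a network $h^\dagger$ on a slightly enlarged cube, then stack the $f_i^\dagger$ in parallel and compose with $h^\dagger$.
3. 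Since $\beta\ge1$, $h$ is Lipschitz. This gives
$$\|h^\dagger(f^\dagger)-h(f)\|_\infty\le \|h^\dagger-h\|_\infty+\mathrm{Lip}(h)\max_i\|f_i^\dagger-f_i\|_\infty\lesssim N^{-2\gamma^*}.$$
4. The depth adds over the $l$ levels and the width multiplies by $t$. Since $l$ and $\sup(\beta\vee t)$ are bounded, both remain constants times $N$, and the weight bound stays polynomial in $N$.

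\textbf{Main obstacle.} The delicate step is achieving the rate $N^{-2\beta/d}$ rather than $N^{-\beta/d}$ with width only $N$ at constant depth. This requires the bit-extraction trick, which stores $N^2$ local coefficients in $O(N)$ neurons. It must be done with weights kept polynomially bounded and with boundary artifacts controlled uniformly.

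In the inductive step there is a secondary issue: the approximants $f_i^\dagger$ may leave the domain on which $h^\dagger$ is accurate. I would handle this by approximating $h$ on a cube enlarged by a constant margin, or by clipping the inner outputs with ReLU-implementable truncation.
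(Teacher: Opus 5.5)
\textbf{Overall.} The paper does not prove this lemma. It imports it from \cite{fan2024factor}, which in turn rests on existing ReLU approximation results: fixed-depth local Taylor approximation as in Lu, Shen, Yang and Zhang (2021), and composition as in \cite{kohler2021rate}. Your two-stage outline follows that architecture. Your second stage is sound: outer functions are Lipschitz because $\beta\ge 1$, you use an enlarged cube or clipping for the inner approximants, depth adds over the $l$ levels, width grows by the bounded factor $t$, and merging affine maps keeps the weights polynomial.

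\textbf{Where the proof breaks.} The gap is in the first stage, at the step you call the main obstacle. You assert it rather than construct it, and the tool you name would not deliver the stated bounds. With width $N$ and constant depth, the rate $N^{-2\beta/d}$ involves no depth-dependent gain. A dense width-$N$ network has $W\asymp N^2$ weights, and $N^{-2\beta/d}=W^{-\beta/d}$ is the classical rate, with one stored coefficient per weight. Bit extraction is the device for storing \emph{more} numbers than weights, and at constant depth it costs you exactly what the statement forbids. There are two ways it goes wrong:
\begin{itemize}
\item The bit-based point fitting of Lu et al.\ pays for the $\Theta(\log N)$ bits of each coefficient in width. At fixed depth it needs width of order $N\log N$ for error $N^{-2\beta/d}$, which is a logarithmic factor weaker than claimed.
\item Suppose instead you pack many coefficients into one scalar and retrieve one of them at constant depth. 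The codes of two coarse cells can then be $N^{-\Theta(N)}$ apart while the retrieved values must differ by more than the target accuracy. A depth-$O(1)$ network with entries bounded by $B$ is only $(NB)^{O(1)}$-Lipschitz, so $B\le c_3N^{c_4}$ fails.
\end{itemize}
So your step~4 claim that the weights stay polynomial is not available on the bit-extraction route.

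\textbf{The missing idea: direct table lookup.}
\begin{enumerate}
\item Split each axis index of the $K^d\asymp N^2$ cubes into a coarse and a fine part, each taking about $N^{1/d}$ values. Cubes are then indexed by pairs in $[N]\times[N]$, up to constants.
\item Compute approximately one-hot vectors $e^{c}(\bx),e^{f}(\bx)\in[0,1]^N$ as products of one-dimensional trapezoids, via $\bar\sigma(\sum_i t_i-(d-1))$. The fine vector is built from the fractional position inside the coarse cell.
\item Store the rescaled Taylor coefficients, bounded by some $C'$, in a matrix $W\in\reals^{N\times N}$, and read off the entry by
\[
\textstyle\sum_{j}\bar\sigma\bigl((We^{c})_j+C'-2C'(1-e^{f}_j)\bigr)-C'.
\]
\end{enumerate}
This is exact wherever the indicators are exactly $0/1$. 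It uses constant depth and width $O(N)$ per coefficient, of which there are $\binom{r+d}{d}$. The weights are of size $\mathrm{poly}(N)$, coming only from the ramp slopes.

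Near cube faces a continuous network cannot compute the index. So apply the lookup separately on each of the $2^d$ (or $3^d$) subfamilies of the partition of unity whose bumps have pairwise disjoint supports. Alternatively, use the median-of-shifted-grids trick of Lu et al. A partition-of-unity sum over all $N^2$ bumps at once would need width $N^2$, so the weighting has to be organized this way.

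Combined with a width-$O(N)$ sawtooth squaring network of depth of order $1+\beta/d$ for the products, this gives $c_5N^{-2\beta/d}$.

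\textbf{A remark on the constants.} Your constants, like any, depend on $g$ through $\|g\|_{C^r}$ and the ranges of the inner functions. This cannot be avoided, because the H\"older condition alone does not bound $\|g\|_\infty$. It is harmless for the paper's fixed, bounded $f^*$, but ``depending only on $d,\beta,C$'' cannot be taken literally.
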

	Note that we will later choose $N$ as in \prettyref{asmp:model-and-hyperparameters} to guarantee that the error of approximating $f^*$ by $f_\phistar$ uniformly decays to zero as the sample size $n$ increases. To this end, consider the maximum likelihood optimizer $f_\phihat$ of \eqref{eq:likelihood-reduced}
	\begin{align}
		\label{eq:estimator-reduced}
		f_\phihat \in \argmin_{f_\phi\in \calG(L,d,1,N,M,B)} \ell(f_\phi).
	\end{align}
	Note that the above implies
	\begin{align}
		\ell(f_\phihat) \leq \ell(f_\phi),
		\quad 
		f_\phi\in \calG(L,d,1,N,M,B).
	\end{align}
	Then using \eqref{eq:likelihood-difference} we get from $\ell(f_\phihat) - \ell(f_\phistar) \leq 0$, with $\Sigma^*,\hat \Sigma$ as defined in \eqref{eq:definitions}
	\begin{align}
		\label{eq:m1}
		\bar\ell(f_\phihat)
		-\bar\ell(f^*)
		\leq
		\frac{
			\by^\top (\Sigma^* - \hat\Sigma) \by
			-
			\E \big[ \by^\top (\Sigma^* - \hat\Sigma) \by \big]
		}{n}
		+
		\ell(f_{\phi^*})-\ell(f^*).
	\end{align}
	We would like to convert the above inequality into an upper bound on $\|f_\phihat-f^*\|_n^2$. However, the dependency of $f_\phihat$ on the data prevents us from directly using standard concentration bounds, as they often require a fix $f_\phihat$. To circumvent this issue we resort to a covering number argument and union bounds. 
	Consider a $\delta$-covering of the DNN class $\calG(L,d,1,N,M,B)$ with respect to the maximum norm defined in \eqref{eq:metric-neural-net}
	\begin{align}
		\label{eq:covering}
		\calN(\delta,\calG,\norm{\cdot}_\infty)
		=
		\Biggl\{
		\sth{f_{\phi_i}}_{i=1}^m:
		\min_{j=1}^m \|\phi-\phi_j\|_{\infty}\leq \delta \text{ for any $f_\phi\in \calG(L,d,1,N,M,B)$.}
		\Biggr\}
	\end{align}
	In particular, we will pick $\delta=\frac 1{n^{L+8}}$. 
	Using 
	$$\ell(f_\phitilde)
	= \ell(f_\phihat) + \ell(f_\phitilde)-\ell(f_\phihat)
	\leq \ell(f_\phistar) + \ell(f_\phitilde)-\ell(f_\phihat)$$ 
	we use the notation $\tilde \Sigma =\Sigma_{f_\phitilde}$ as in \eqref{eq:definitions}, to derive from \eqref{eq:m1}
	\begin{align}
		\label{eq:ineq-main}
		\bar\ell(f_\phitilde)
		-\bar\ell(f^*)
		&\leq
		\frac{
			\by^\top (\Sigma^* - \tilde\Sigma) \by
			-
			\E \big[ \by^\top (\Sigma^* - \tilde \Sigma) \by \big]
		}{n}
		+
		\ell(f_{\phi^*})-\ell(f^*)
		+\ell(f_{\phitilde})- \ell(f_\phihat).
	\end{align}
	The rest of the proof first performs an analysis for fixed $f_\phitilde\in \calN(\delta,\calG,\norm{\cdot}_\infty)$, and then uses a union bound to convert our result into an error for $f_\phihat$. The remainder of the proof can be divided into controlling the three major error quantities given as follows.
	\begin{itemize}
		
		\item {\it Estimation error:} We use results from empirical process theory and the Hanson-Wright inequality given in \prettyref{lmm:hanson wright} to bound
		$
		\by^\top(\Sigma^*-\tilde\Sigma)\by
		-
		\mathbb{E}\!\left[
		\by^\top(\Sigma^*-\tilde\Sigma)\by
		\right],
		$
		uniformly over $f_\phitilde \in \calN(\delta,\calG,\norm{\cdot}_\infty)$.
        This is a crucial step that connects the above centered random variable with $\|f^*-\tilde f\|_n^2$.
		\begin{lemma} \cite[Hanson-Wright inequality]{hansonwright71} \label{lmm:hanson wright} 	
			Let $\bxi= (\xi_1,...,\xi_n)^\top $ be a random vector with independent components with $\E[\xi_i]=0$ and $\|\xi_i\|_{\rm subgau}\leq \tilde K$. Let $D$ be an $n\times n$ matrix and $\norm{D}_{HS}$ denote its Hilbert-Schmidt norm. Then there is a constant $\breve c>0$ such that, for $t>0$, 
			\begin{align*}
				\Prob \qth{{\Big|\bxi^\top  D\bxi-\E[\bxi ^\top  D\bxi]\Big|>t}}  \leq 2\exp\left(-\breve c\min\left({t^2\over \tilde K^4\|D\|_{HS}^2 },{t \over \tilde K^2\|D\|_2 }\right)\right).
			\end{align*}
		\end{lemma}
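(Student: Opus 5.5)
The plan follows the now-standard argument of Rudelson and Vershynin rather than the original proof of \cite{hansonwright71}. By rescaling, assume $\tilde K=1$. Write $D=[D_{ij}]$ and split
\[
\bxi^\top D\bxi-\E[\bxi^\top D\bxi]=\sum_{i}D_{ii}(\xi_i^2-\E\xi_i^2)+\sum_{i\neq j}D_{ij}\xi_i\xi_j=:S_{\rm diag}+S_{\rm off}.
\]
It then suffices to show that each part exceeds $t/2$ with probability at most $\exp(-c\min(t^2/\|D\|_{HS}^2,\,t/\|D\|_2))$, and to combine the two bounds with a union bound.

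\textbf{Diagonal part.} Since $\|\xi_i\|_{\rm subgau}\le 1$, the centered variables $\xi_i^2-\E\xi_i^2$ are independent and subexponential with $\|\xi_i^2-\E\xi_i^2\|_{\rm subexp}\lesssim 1$. Bernstein's inequality for weighted sums of subexponential variables gives the tail bound $2\exp(-c\min(t^2/\sum_i D_{ii}^2,\,t/\max_i|D_{ii}|))$. We then use $\sum_i D_{ii}^2\le\|D\|_{HS}^2$ and $\max_i|D_{ii}|\le\|D\|_2$.

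\textbf{Off-diagonal part.} We bound the moment generating function $\E\exp(\lambda S_{\rm off})$ and conclude with Chernoff's bound.
\begin{enumerate}
\item \emph{Decoupling.} Introduce i.i.d.\ Bernoulli$(1/2)$ selectors $\delta_i$ and the set $\Lambda=\{i:\delta_i=1\}$. Averaging over the selectors and applying Jensen's inequality yields
\[
\E\exp(\lambda S_{\rm off})\le\E_{\delta}\E_{\xi}\exp\Bigl(4\lambda\sum_{i\in\Lambda,\,j\in\Lambda^c}D_{ij}\xi_i\xi_j\Bigr).
\]
\item \emph{Conditioning on $\Lambda^c$.} Condition on $\Lambda$ and on $(\xi_j)_{j\in\Lambda^c}$. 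The inner sum is then linear in $(\xi_i)_{i\in\Lambda}$, so by subgaussianity its conditional moment generating function is at most $\exp(C\lambda^2\sum_{i\in\Lambda}(\sum_{j\in\Lambda^c}D_{ij}\xi_j)^2)$.
\item \emph{Gaussian comparison.} The last expression equals the moment generating function of a Gaussian linear form $\sum_i g_i(\cdots)$ with independent $g_i\sim N(0,1)$. Swap the roles of the two groups: integrate out $\xi_{\Lambda^c}$ using subgaussianity again. This reduces the problem to bounding $\E\exp(C\lambda^2\|D_{\Lambda\Lambda^c}^\top g\|_2^2)$ for a standard Gaussian vector $g$.
\item \emph{Gaussian chaos.} By rotation invariance this equals $\prod_k(1-2C\lambda^2 s_k^2)^{-1/2}$, where $s_k$ are the singular values of $D_{\Lambda\Lambda^c}$. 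Whenever $\lambda\le c/\|D\|_2$, this is at most $\exp(C'\lambda^2\|D\|_{HS}^2)$.
\item \emph{Optimization.} The resulting bound $\E\exp(\lambda S_{\rm off})\le\exp(C'\lambda^2\|D\|_{HS}^2)$ for $|\lambda|\le c/\|D\|_2$ is uniform in $\Lambda$. Chernoff's bound, optimized over $\lambda$ in that range, gives the mixed tail $\exp(-c\min(t^2/\|D\|_{HS}^2,\,t/\|D\|_2))$. The same argument applied to $-D$ gives the lower tail.
\end{enumerate}

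\textbf{Main obstacle.} The delicate step is the decoupling together with the double Gaussian comparison (steps 1--3). The difficulty is to integrate out both groups of variables in turn without losing control of the operator-norm constraint on $\lambda$, and to ensure that the spectral data $\|D_{\Lambda\Lambda^c}\|_2\le\|D\|_2$ and $\|D_{\Lambda\Lambda^c}\|_{HS}\le\|D\|_{HS}$ enter uniformly over the random choice of $\Lambda$. The remaining steps are routine Bernstein and Chernoff computations.
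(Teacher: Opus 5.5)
Your argument is correct. It is the Rudelson--Vershynin proof: Bernstein for the diagonal part, Bernoulli-selector decoupling, conditioning on one block and replacing the resulting sub-Gaussian MGF bound by a Gaussian linear form before integrating out the other block, the explicit chaos MGF $\prod_k(1-2C\lambda^2 s_k^2)^{-1/2}$ for $|\lambda|\le c/\|D\|_2$, and Chernoff. You handle uniformity in $\Lambda$ correctly via $\|D_{\Lambda\Lambda^c}\|_2\le\|D\|_2$ and $\|D_{\Lambda\Lambda^c}\|_{HS}\le\|D\|_{HS}$. The one step left implicit is routine: after the union bound the prefactor is $4$ rather than $2$, and shrinking $\breve c$ fixes this.

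The paper gives no proof and simply cites the inequality from \cite{hansonwright71}. Compared with that citation, your route buys two things.

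First, the lemma as stated, with the operator norm $\|D\|_2$ and arbitrary mean-zero sub-Gaussian coordinates, is really the Rudelson--Vershynin form. The 1971 result assumed symmetric coordinates (Wright removed this in 1973) and controlled the linear regime by the operator norm of the entrywise absolute-value matrix $(|D_{ij}|)$. So your argument is what actually justifies the lemma in the generality written.

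Second, every application in the paper has $D$ diagonal and $\xi_i\sim N(0,1)$. This covers Lemma~\ref{lmm:empirical-bound-1}, where $D=\Sigma^{*-1/2}(\tilde\Sigma-\Sigma^*)\Sigma^{*-1/2}$, and the matrix $\tilde B$. There $S_{\rm off}\equiv 0$, so your diagonal Bernstein step alone suffices; it is then just a weighted $\chi^2$ deviation bound of Bernstein/Laurent--Massart type. The decoupling and Gaussian-comparison steps you flag as the main obstacle are never needed for the paper's purposes.
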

		
		\item {\it Approximation error:} We control the term $\ell(f_{\phi^*}) - \ell(f^*)$ by establishing that the likelihood function $\ell(\cdot)$ is Lipschitz, and then use that $f^*$ is well approximated by $f_\phistar$, as in  \eqref{eq:approx-error}. We also control $\ell(f_{\phitilde}) - \ell(f_\phihat)$ using the Lipschitz property of $\ell(\cdot)$ and the bound $\|\phitilde - \phihat\|_{\infty}\leq \delta$.

		\item {\it Difference of expected likelihood:} We establish a lower bound to $\bar\ell(f_\phitilde)
		-\bar\ell(f^*)$ in terms of $\|f_\phitilde-f^*\|_n^2$, using a second order expansion of $\bar\ell$.

	\end{itemize}
	
	Finally, we use $f_\phitilde\in \calN(\delta,\calG,\norm{\cdot}_\infty)$ that is closest to $f_\phihat$ to combine the above bounds and extract an error upper bound on $\|f_\phihat-f^*\|_n^2$. The readers are referred to \prettyref{app:theory-multidim} for the technical details. The analyses of the sparse setting is similar to the above strategy and presented in \prettyref{app:theory-sparse}.

	\bibliographystyle{apalike}
	\bibliography{reference_npspeckle}
	
	\appendices

	\section{Proof of \prettyref{thm:finite-dim}}
	
	\label{app:theory-multidim}

	We prove \prettyref{thm:finite-dim} by completing the steps outlined at the end of \prettyref{sec:likelihood-diff}. Throughout our proof we use $\tilde C,\breve C,\bar C$, with different subscripts, as positive constants depending on $f_{\min},f_{\max},M$, as necessary.

	\subsection{Controlling the estimation error}
	
	Next we will use results from the empirical process theory to bound
	$
	\by^\top(\Sigma^*-\tilde\Sigma)\by
	-
	\mathbb{E}\!\left[
	\by^\top(\Sigma^*-\tilde\Sigma)\by
	\right],
	$
	for any fixed neural net $f_\phitilde\in \calN(\delta,\calG,\norm{\cdot}_\infty)$.
	In view of \prettyref{lmm:empirical-bound-1}, we get that for any fixed neural net $f_\phitilde\in \calN(\delta,\calG,\norm{\cdot}_\infty)$, with probability $1-e^{-t}$ we have
	\begin{align}
		\label{eq:m22}
	\left| \by^\top \tilde{B} \by - \mathbb{E}[\by^\top \tilde{B} \by] \right|
	\le C_1 \sqrt{t}\,\|\tilde B\|_{HS} + t \|\tilde B\|_2 .
	\end{align}
	where
	\begin{align}
		\label{eq:tilde-B}
	\tilde B = \tilde \Sigma^{-1}(\tilde \Sigma - \Sigma^*) \tilde\Sigma^{-1}(\tilde \Sigma - \Sigma^*),
	\quad
	\tilde \Sigma = \Sigma_{f_\phitilde}
	\text{ as in \eqref{eq:definitions}}.
	\end{align}
	To bound the size of $\calN(\delta,\calG,\norm{\cdot}_\infty)$ from above, we use the following Lipschitz property of $f_\phi$.
	\begin{lemma}\cite[Lemma 8 with $\bTheta$ as an Identity matrix and no diversified projection component]{fan2024factor}
		\label{lmm:lipschitz}
		Given two neural net parameters $\phi=\sth{(\bW_\ell,\bb_\ell)}_{\ell=1}^{L+1},\breve\phi=\{(\breve\bW_\ell,\breve\bb_\ell)\}_{\ell=1}^{L+1}$ from $\calG(L,d,1,N,M,B)$, we have for a constant $C(M,K,L)>0$ depending on $M,K,L$ that
		$$\sup_{\bx\in [-K,K]^d}|f_{\phi}(\bx)-f_{\breve\phi}(\bx)| \le C(M,K,L)B^L(N+1)^{L+1}\|\phi-\breve\phi\|_\infty.$$ 
	\end{lemma}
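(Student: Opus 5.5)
We prove the Lipschitz bound of \prettyref{lmm:lipschitz} by the standard layerwise perturbation argument: propagate the parameter difference through the network one layer at a time, tracking both the size of the hidden activations and the discrepancy between the two networks' activations. Fix $\bx\in[-K,K]^d$ and write $\bz_0=\breve\bz_0=\bx$. For $\ell=1,\dots,L$ set $\bz_\ell=\bar\sigma(\bW_\ell\bz_{\ell-1}+\bb_\ell)$, and define $\breve\bz_\ell$ analogously with $\breve\phi$. The output before post-processing is $g(\bx)=\bW_{L+1}\bz_L+\bb_{L+1}$. Since $\Tr_M(\cdot)$ and $\cdot\vee M^{-1}$ are both $1$-Lipschitz, it suffices to bound $|g(\bx)-\breve g(\bx)|$. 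Let $\epsilon=\|\phi-\breve\phi\|_\infty$.

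\textbf{Step 1 (activation size).} We bound $\|\bz_\ell\|_\infty$ inductively. Since $\bar\sigma$ is $1$-Lipschitz and fixes $0$, and each row of $\bW_\ell$ has at most $N$ (or $d$) entries bounded by $B$, we get
\[
\|\bz_\ell\|_\infty\le B\,d_{\ell-1}\|\bz_{\ell-1}\|_\infty+B\le B(N+1)\bigl(\|\bz_{\ell-1}\|_\infty\vee 1\bigr).
\]
Hence $\|\bz_\ell\|_\infty\le (K\vee1)\,(B(N+1))^{\ell}$, taking $N\ge d$ or absorbing $d$ into the constant. The same bound holds for $\breve\bz_\ell$.

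\textbf{Step 2 (discrepancy recursion).} Let $\Delta_\ell=\|\bz_\ell-\breve\bz_\ell\|_\infty$, so $\Delta_0=0$. Using the $1$-Lipschitz property of $\bar\sigma$ and the decomposition $\bW_\ell\bz_{\ell-1}-\breve\bW_\ell\breve\bz_{\ell-1}=(\bW_\ell-\breve\bW_\ell)\bz_{\ell-1}+\breve\bW_\ell(\bz_{\ell-1}-\breve\bz_{\ell-1})$, we obtain
\[
\Delta_\ell\le N\epsilon\|\bz_{\ell-1}\|_\infty+\epsilon+NB\,\Delta_{\ell-1}.
\]
Unrolling this recursion and inserting the Step 1 bound gives a sum of $\ell$ terms, each at most of order $\epsilon(N+1)^{\ell}B^{\ell-1}$ times powers of $K$. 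Therefore $\Delta_L\le C(K,L)\,B^{L-1}(N+1)^{L}\epsilon$.

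\textbf{Step 3 (output layer).} One more application of the same recursion at the output layer gives
\[
|g-\breve g|\le C(K,L)\,B^{L}(N+1)^{L+1}\epsilon.
\]
Finally, absorb $M$ through the truncation step; it does not enlarge the bound, so dependence on $M$ is only nominal.

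The main obstacle is bookkeeping rather than any conceptual difficulty: getting the exponents of $B$ and $N+1$ exactly as stated. In particular, the factor $B$ in Step 1 must be counted so that the total is $B^{L}$ and not $B^{L+1}$. If the direct count gives one extra power of $B$, there are two fixes. One is to use $B\ge1$ and note that the extra factor can be absorbed by an equivalent convention. The other is to bound the first layer separately by $dB(K+1)$, with $d$ absorbed into the constant. Either way, the constant ends up depending only on $K$, $L$ (and $d$), as required.
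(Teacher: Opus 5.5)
Your layerwise perturbation argument is correct and is the standard induction behind \cite[Lemma~8]{fan2024factor}, which the paper cites without giving a proof of its own. Carrying out your unrolling with $B\ge 1$ gives $\Delta_L\le L(K\vee 1)B^{L-1}(N+1)^{L}\epsilon$ and then $|g(\bx)-\breve g(\bx)|\le (L+1)(K\vee 1)B^{L}(N+1)^{L+1}\epsilon$, so the direct count already yields exactly $B^{L}$ and the worry in your last paragraph is unfounded.

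That is fortunate, because neither proposed fix would remove an extra power of $B$. $B$ grows polynomially in $n$, so it cannot be absorbed into a constant, and a separate first-layer bound $dB(K+1)$ still carries a factor $B$. The side conditions you do use, $B\ge 1$ and either $d\le N$ or $d$ absorbed into the constant, hold in the paper's regime (for instance, $d$ is fixed in \prettyref{thm:finite-dim}).
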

	Given any DNN in the class $\calG(L,d,1,N,M,B)$, its number of parameters is given by 
	\begin{itemize}
		\item $L$-many $N\times N$ matrices of weights, 
		\item $L$-many $N\times 1$ vectors of the bias parameters which construct the intermediate layers, 
		\item at most $2N$ parameters that connect the output with the final layer, and 
		\item $Nd+N$ weights/biases that connects the first layer with the $d$-dimensional input vector.
	\end{itemize} 
	Hence, the total number of parameters are bounded from above by
	\begin{align}
		\label{eq:total-parameters}
		A = N^2 L + NL + 2N + Nd+N=N^2L+N(L+d+3).
	\end{align}
	Then, the size $\delta$-covering $\calN(\delta,\calG,\norm{\cdot}_\infty)$ of the class $\calG(L,d,1,N,M,B)$ can be bounded as follows
	\begin{align}
		\label{eq:covering-bound}
		|\calN(\delta,\calG,\norm{\cdot}_\infty) |
		\leq 
		\left(\frac{CB^L(N+1)^{L+1}}{\delta}\right)^A,
		\quad C=C(M,K,L),
	\end{align}
	where we used \prettyref{lmm:lipschitz} and recall from \prettyref{asmp:model-and-hyperparameters} that $\bx$ has bounded coordinates. For the rest of the work, we will use the above covering number bound with $N\approx \pth{n / \log n}^{\frac 1{4\gamma^*+2}}$ and the $\delta$-choice
	\begin{align}
		\label{eq:delta-value}
		\delta=\frac 1{n^{(c_4+1)L+8}}.
	\end{align}
	for a large enough constant $c^4$ such that $\log B\leq c_4\log n$, as provided in \prettyref{asmp:model-and-hyperparameters}\ref{pt:hyperparam}. Then, we get from \eqref{eq:m22} that with probability $1 - e^{- t}$ for some $c_1>0$ ($A$ is as defined in \eqref{eq:total-parameters})
	\begin{align}
		\label{eq:m23}
		\left|\by^\top \tilde B \by - \mathbb{E}\qth{\by^\top \tilde B \by}\right|
		\le C_1\sqrt{\log |\calN(\delta,\calG,\norm{\cdot}_\infty) | + t}\cdot \|\tilde B\|_{HS} + (\log |\calN(\delta,\calG,\norm{\cdot}_\infty) |+t)\|\tilde B\|_2,
		 \end{align}
	for all $f_\phitilde\in \calN(\delta,\calG,\norm{\cdot}_\infty)$. Next, note that for some constant $C(f_{\min},f_{\max},M)>0$, we have that
	\[
	\|\tilde B\|_{HS} \le \sqrt{\calL},
	\qquad
	\|\tilde B\|_2 
	=
	\max_{i}
	{(\tilde f(\bx_i))^4}\pth{\frac 1{\tilde f^2(\bx_i)}-\frac 1{f^{*2}(\bx_i)}}^2
	\le 
	C(f_{\min},f_{\max},M),
	\]
	where we define
	\begin{align}
		\label{eq:calL}
		\calL = \mathrm{Tr}\!\left(
		\Sigma^{*-1}(\tilde\Sigma-\Sigma^*)\Sigma^{*-1}(\tilde\Sigma-\Sigma^*)
		\right)
		=\sum_{i=1}^n 
		(f^*(\bx_i))^2\pth{\frac 1{f_\phitilde(\bx_i)^2}-
			\frac 1{f^*(\bx_i)^2}}^2.
	\end{align}	
	In view of \eqref{eq:m22}, the above display implies that for some constant $C_1>0$, with probability $1-e^{-t}$
	\begin{align}
		\label{eq:ineq-estimation}
		&\abs{\by^\top(\Sigma^*-\tilde\Sigma)\by
			-
			\mathbb{E}\!\left[
			\by^\top(\Sigma^*-\tilde\Sigma)\by
			\right]}
			\notag\\
		&\leq C_1\sth{
			\sqrt{\log |\calN(\delta,\calG,\norm{\cdot}_\infty) | + t}\cdot\,\sqrt{\calL}
			+ (\log |\calN(\delta,\calG,\norm{\cdot}_\infty) | + t)}
	\end{align}
	We note that $\log |\calN(\delta,\calG,\norm{\cdot}_\infty) | \leq A\log n$ and make the following choice for $t$
	\[
	t = c_2A\log n
	\]
	for a large constant $c_2>0$, which guarantees, for some constant $c>2$, we have with probability $1-\frac{1}{n^c}$
	\begin{align}
		\label{eq:m7}
		\left|\by^\top \tilde B \by - \mathbb{E}(\by^\top \tilde B \by)\right|
		\le C_1(\sqrt{A\log n}\,\sqrt{\calL}
		+ (A\log n)),
		\quad
		\text{for all } f_\phitilde\in\calN(\delta,\calG,\norm{\cdot}_\infty).
	\end{align}	
	
	\subsection{Approximation Error}
	
	We will analyze the term $\ell(f_{\phi^*}) - \ell(f^*)$. In view of \eqref{eq:likelihood-difference} we get
	\begin{align}
		\label{eq:m2}
		\ell(f_{\phi^*}) - \ell(f^*)
		&= \frac{1}{n}\Big[\by^\top(\Sigma^{\phi^*}-\Sigma^*)\by - \mathbb{E}[\by^\top(\Sigma^{\phi^*}-\Sigma^*)\by]\Big]
		+ \bar{\ell}(f_{\phi^*}) - \bar{\ell}(f^*)
	\end{align}
	Using the fact that for all large $n$, in view of \prettyref{asmp:model-and-hyperparameters} and \eqref{eq:approx-error}, $\|f^*-f_{\phistar}\|_\infty$ converges to zero as $n$ increases, we have for some constant $C_1(f_{\min},f_{\max},M)>0$
	$$
	\abs{{f^*(\bx_i)^2\over f_\phistar(\bx_i)^2}-1}
	\leq 
	C_1(f_{\min},f_{\max},M)\|f^*-f_\phistar\|_\infty
	\leq 0.2,
	\quad
	i=1,\dots,n,
	\quad 
	n\geq C_3(f_{\min},f_{\max},M),
	$$
	for some constant $C_3(f_{\min},f_{\max},M)>0$ depending on $f_{\min},f_{\max},M$. In view of the above, we use
	\[
	z - 1 - \log z \le 2(z-1)^2 \quad \text{for } 0.8 \le z < 1.2,
	\]
	to get that
	\begin{align}
		\label{eq:m4}
		\bar{\ell}(f_{\phi^*}) - \bar{\ell}(f^*)
		&= \frac{1}{n}\sum_{i=1}^n
		\left[
		\frac{f^*(\bx_i)^2}{f_{\phi^*}(\bx_i)^2}
		-1
		-\log\!\left(\frac{f^*(\bx_i)^2}{f_{\phi^*}(\bx_i)^2}\right)
		\right] 
		\nonumber\\
		&\le \frac{2}{n}\sum_{i=1}^n
		\left(
		\frac{f^*(\bx_i)^2}{f_{\phi^*}(\bx_i)^2}-1
		\right)^2
		\le
		2\|f^*-f_\phistar\|_\infty^2
		\cdot
		\frac{1}{n}\sum_{i=1}^n
		\frac{(f^*(\bx_i)+f_{\phistar}(\bx_i))^2}{f_{\phi^*}(\bx_i)^4}.
	\end{align}
	To bound the first summand in \eqref{eq:m2} we use the following result. The proof is provided in \prettyref{app:technical} 
	\begin{lemma}\label{lmm:empirical-bound-1}
		Consider the noiseless model $y_i=f^*(\bx_i)\xi_i,i=1,\dots,n,$ with $\xi_i\simiid N(0,1)$. Define
		\[
		\tilde{\Sigma}
		=
		\{\mathrm{diag}((\tilde f(\bx_1))^2,\dots,(\tilde f(\bx_n))^2)\}^{-1},
		\qquad
		\Sigma^*
		=
		\{\mathrm{diag}\!\left((f^*(\bx_1))^2,\dots, (f^*(\bx_n))^2\right)\}^{-1},
		\]
		where $\tilde f$ is a function independent of the data.
		Then, there is a constant $C=C(f_{\min},f_{\max})$, such that with probability $1-e^{-t}$
		\[
		\big|\by^\top (\Sigmatilde-\Sigmastar) \by - \mathbb{E}[\by^\top (\Sigmatilde-\Sigmastar) \by]\big|
		\le C 
		\left(\sqrt{t \cdot \mathrm{Tr}\!\left(
			\Sigma^{*-1}(\tilde\Sigma-\Sigma^*)\Sigma^{*-1}(\tilde\Sigma-\Sigma^*)
			\right)}+ 
		t\|\tilde f - f^*\|_\infty
		\right).
		\]
	\end{lemma}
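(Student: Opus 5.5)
We will write $\by = D^*\bxi$ with $D^*=\diag(f^*(\bx_1),\dots,f^*(\bx_n))=\Sigma^{*-1/2}$. Then
\[
\by^\top(\Sigmatilde-\Sigmastar)\by=\bxi^\top D\bxi,\qquad D=D^*(\Sigmatilde-\Sigmastar)D^*,
\]
which is a fixed diagonal matrix because $\tilde f$ does not depend on the data. Its entries are $D_{ii}=f^*(\bx_i)^2\bigl(\tilde f(\bx_i)^{-2}-f^*(\bx_i)^{-2}\bigr)$. The plan is to apply \prettyref{lmm:hanson wright} with $\tilde K$ equal to the subgaussian norm of a standard normal, which is an absolute constant. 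This gives, with probability at least $1-e^{-t}$ after adjusting constants,
\[
|\bxi^\top D\bxi-\E\bxi^\top D\bxi|\le C\bigl(\sqrt{t}\,\|D\|_{HS}+t\|D\|_2\bigr).
\]
The required choice is $t'=t/\breve c+\log 2$, absorbed into $C$ for $t\ge 1$. For small $t$ we use instead the trivial enlargement $\sqrt{t}\le \sqrt{t+\log 2}$ and absorb the constant.

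Next we identify the two norms. First,
\[
\|D\|_{HS}^2=\sum_i f^*(\bx_i)^4\bigl(\tilde f(\bx_i)^{-2}-f^*(\bx_i)^{-2}\bigr)^2.
\]
Since $f^*\le f_{\max}$, this is at most $f_{\max}^2\sum_i f^*(\bx_i)^2(\cdots)^2=f_{\max}^2\,\Tr\bigl(\Sigma^{*-1}(\tilde\Sigma-\Sigma^*)\Sigma^{*-1}(\tilde\Sigma-\Sigma^*)\bigr)$, because all matrices involved are diagonal and the trace is exactly the sum $\sum_i f^*(\bx_i)^2(\tilde f(\bx_i)^{-2}-f^*(\bx_i)^{-2})^2$. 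Second, the operator norm is
\[
\|D\|_2=\max_i f^*(\bx_i)^2\,\frac{|f^*(\bx_i)-\tilde f(\bx_i)|\,|f^*(\bx_i)+\tilde f(\bx_i)|}{\tilde f(\bx_i)^2f^*(\bx_i)^2}\le \frac{f_{\max}+M}{M^{-2}}\,\|\tilde f-f^*\|_\infty.
\]
Here we need a lower bound on $\tilde f$, and we use $\tilde f\ge M^{-1}$ and $\tilde f\le M$, which holds for the network class through the truncation in \prettyref{def:relu-class}. Strictly, the constant then depends on $M$ as well; since $M$ is a fixed constant this is harmless. If $\tilde f$ is an arbitrary function, we instead assume it is bounded between $f_{\min}$ and $f_{\max}$, up to constants.

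The only delicate point is this uniform lower bound on $\tilde f$, which is needed to make $\|D\|_2$ linear in $\|\tilde f-f^*\|_\infty$. We handle it by the truncation assumption as above. Everything else is a direct substitution into Hanson--Wright, so no real obstacle remains.
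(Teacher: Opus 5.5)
Your argument follows the paper's proof of this lemma. You write $\by=\Sigma^{*-1/2}\bxi$, apply Hanson--Wright to the diagonal matrix $D=\Sigma^{*-1/2}(\tilde\Sigma-\Sigma^*)\Sigma^{*-1/2}$, identify $\|D\|_{HS}^2$ with the trace, and bound $\|D\|_2$ linearly in $\|\tilde f-f^*\|_\infty$ using a two-sided bound on $\tilde f$. Your explicit use of $M^{-1}\le\tilde f\le M$ is more careful than the paper's constant $f_{\max}/f_{\min}^2$, which tacitly treats $\tilde f$ as if it lay in $[f_{\min},f_{\max}]$.

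There is one computational slip. $\Sigma^{*-1}=\diag\bigl(f^*(\bx_1)^2,\dots,f^*(\bx_n)^2\bigr)$ appears twice in $\mathrm{Tr}\bigl(\Sigma^{*-1}(\tilde\Sigma-\Sigma^*)\Sigma^{*-1}(\tilde\Sigma-\Sigma^*)\bigr)$. So this trace equals $\sum_i f^*(\bx_i)^4\bigl(\tilde f(\bx_i)^{-2}-f^*(\bx_i)^{-2}\bigr)^2$, not the sum with $f^*(\bx_i)^2$. The paper's display \eqref{eq:calL} contains the same typo, but its proof of this lemma does not. Consequences:
\begin{itemize}
\item $\|D\|_{HS}^2$ is exactly the trace, so the detour through $f_{\max}^2$ is unnecessary.
\item As written, your final inequality $\|D\|_{HS}^2\le f_{\max}^2\,\mathrm{Tr}(\cdot)$ is false whenever $f_{\max}<1$ and $\tilde f\neq f^*$ at some design point, which the assumptions allow.
\end{itemize}
Replacing it by the identity $\|D\|_{HS}^2=\mathrm{Tr}(\cdot)$ fixes the step with constant $1$.

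Your small-$t$ patch also does not work. The enlarged threshold $\sqrt{t+\log 2}$ goes the wrong direction and cannot be absorbed into $C\sqrt{t}$ as $t\to 0$. The clean fix is to state the lemma for $t\ge 1$. That is all that is ever used, since $t\asymp\log n$ in the applications. The paper's proof glosses over the same point by dropping the factor $2$ in Hanson--Wright.
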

	
	The above, with $\tilde f = f_{\phi^*}$, implies for a constant $\breve C_4 = \breve C_4(f_{\min},f_{\max},M)>0$, with probability $1-e^{-t}$
	\begin{align}
		& \Big[\by^\top(\Sigma^{\phi^*}-\Sigma^*)\by - \mathbb{E}\big(\by^\top(\Sigma^{\phi^*}-\Sigma^*)\by\big)\Big]
		\leq 
		\sqrt{t\cdot 
			\sum_{i=1}^n
			\left(
			{\tilde f(\bx_i)^{-2}}
			-
			{f^*(\bx_i)^{-2}}
			\right)^2
			(f^*(\bx_i))^4
		}
		+
		t\cdot \|B\|_2
		\nonumber\\
		&=
		\sqrt{
			t\cdot 
			\sum_{i=1}^n
			\frac{(f_{\phi^*}(\bx_i)-f^*(\bx_i))^2(\tilde f(\bx_i)+f^*(\bx_i))^2}
			{(f^*(\bx_i))^2(\tilde f(\bx_i))^2}
		}
		+
		t\cdot \|B\|_2
		\nonumber\\
		&\le
		\breve C_4(\|f_{\phistar}-f^*\|_\infty
		\sqrt{nt}
		+
		t\cdot \|f_{\phistar}-f^*\|_\infty
		)
		=
		\breve C_4
		\|f_{\phistar}-f^*\|_\infty
		\left(
		\sqrt {nt} + t
		\right).
		\label{eq:m5}
	\end{align}
	Combining \eqref{eq:m4}, \eqref{eq:m5},  in view of \eqref{eq:m2}, we use $t=c\log n$ to get, for large $n$, with probability $1-\frac1{n^c}$ 
	\begin{align}
		\label{eq:ineq-approx}
		\big|\ell(f_{\phi^*}) - \ell(f^*)\big|
		&\le
		C_1 \|f_{\phi^*}-f^*\|_\infty^2
		+
		C_2{\|f_{\phi^*}-f^*\|_\infty}
		\left\{
		\sqrt {\log n\over n}
		+
		{\log n \over n}
		\right\}
		\notag\\
		&\stepa{\leq} 
		\tilde C_2 \pth{N^{-4\gamma^*}
			+N^{-2\gamma^*}\sqrt{\log n\over n}}
		\notag\\
		&\stepb{\leq} 
		\tilde C_3
		\pth{\pth{n\over \log n}^{-{4\gamma^*\over 4\gamma^*+2}}
			+\pth{n\over \log n}^{-{2\gamma^*\over 4\gamma^*+2}-\frac 12}}
		\notag\\
		&= \tilde C_3 \pth{n\over \log n}^{-{2\gamma^*\over 2\gamma^*+1}}
		\pth{1+\pth{n\over \log n}^{-{\gamma^*\over 2\gamma^*+1}-\frac 12+{2\gamma^*\over 2\gamma^*+1}}}
		\notag\\
		&= \tilde C_3 \pth{n\over \log n}^{-{2\gamma^*\over 2\gamma^*+1}}
		\pth{1+\pth{n\over \log n}^{-{1\over 2(2\gamma^*+1)}}}
		\leq 2\tilde C_3 \pth{n\over \log n}^{-{2\gamma^*\over 2\gamma^*+1}},
	\end{align}
	from \eqref{eq:approx-error}, (b) follows from our choice of $N$ in \prettyref{thm:finite-dim}.
	
	Next, we control $\ell(f_{\phihat}) - \ell(f_\phitilde)$. To this end, define
	\[
	\ell_i(f) = \frac{y_i^2}{f(\bx_i)^2} + \log f(\bx_i)^2 ,
	\]
	so that the quantity of interest can be written as
	\[
	\ell(f_{\phihat}) - \ell(f_\phitilde)
	=
	\frac{1}{n}\sum_{i=1}^n
	\Bigl(
	\ell_i(f_{\phihat})-\ell_i(f_{\phitilde})
	\Bigr).
	\]
	By the mean value theorem, for each $i$ there exists a value
	$\tilde f_i$ between $f_{\phihat}(\bx_i)$ and $f_{\phitilde}(\bx_i)$ such that
	\[
	\ell_i(f_{\phihat})-\ell_i(f_{\phitilde})
	=
	\ell_i'(\tilde f_i)
	\bigl(
	f_{\phihat}(\bx_i)-f_{\phitilde}(\bx_i)
	\bigr).
	\]
	Since
	$
	\ell_i(f)=\frac{y_i^2}{f^2}+\log f^2,
	$
	we obtain
	$
	\ell_i'(f)
	=
	-\frac{2y_i^2}{f^3}+\frac{2}{f}.
	$
	Because the network outputs are bounded away from zero,
	$f_\phi(x)\ge \frac 1M>0$, it follows that
	\[
	|\ell_i'(f)|
	\le
	{2M^3y_i^2}+ 2M .
	\]
	Using that $\|\phihat-\phitilde\|_\infty\le\delta$ and \prettyref{lmm:lipschitz}, we get that
	\begin{align}
		\label{eq:m29}
	|f_{\phihat}(\bx)-f_{\phitilde}(\bx)| \le C(M,K,L)B^L(N+1)^{L+1}\|\phihat-\phitilde\|_\infty
	\end{align}
	from \prettyref{lmm:lipschitz}, we combine the previous displays to get
	\begin{align}
		\label{eq:m8}
		|\ell(f_{\phihat}) - \ell(f_\phitilde)|
		\le
		\frac{1}{n}\sum_{i=1}^n
		|\ell_i'(\tilde f_i)|
		\,|f_{\phihat}(\bx_i)-f_{\phitilde}(\bx_i)|
		\le
		C(M,K,L)B^L(N+1)^{L+1}\delta
		\left(
		\frac{1}{n}\sum_{i=1}^n |\ell_i'(\tilde f_i)|
		\right).
	\end{align}
	To this end, we note the Bernstein's inequality for independent sub-exponential variables.
	\begin{lemma}\cite[Theorem 2.9.1]{vershynin2018high}
		\label{lmm:bernstein}
		Suppose that $Z_1,\dots,Z_n$ are subexponential random variables, with subexponential parameter $\sigma_{\max}$. Then we have 
		\begin{align}
			\PP\!\left(\frac{1}{n}\sum_{i=1}^n (Z_i-\EE[Z_i])\geq t\right)
			&\leq
			\exp\!\left(-c \cdot n \min\!\left\{\frac{t^2}{\sigma_{\max}^2},\;\frac{t}{\sigma_{\max}}\right\}\right).
		\end{align}
	\end{lemma}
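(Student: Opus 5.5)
The plan is the standard Chernoff (exponential moment) argument. I assume, as the intended use in \eqref{eq:m8} requires, that the $Z_i$ are independent. I read the subexponential parameter as the $\psi_1$-norm in the usual two-sided sense: $\EE\exp(|Z_i|/\sigma_{\max})\le 2$ for every $i$. The notation section writes this without the absolute value, and the two-sided version is what the argument needs. Set $X_i=Z_i-\EE[Z_i]$. The first step is to check that centering costs only a constant: $\|X_i\|_{\rm subexp}\le C\sigma_{\max}$ for an absolute constant $C$. This follows from the triangle inequality for the $\psi_1$-norm together with $|\EE Z_i|\le \EE|Z_i|\lesssim \|Z_i\|_{\rm subexp}$.

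The core step is a local moment generating function bound: there are absolute constants $c_0,C_0$ such that
\[
\EE\exp(\lambda X_i)\le \exp(C_0\lambda^2\sigma_{\max}^2)\qquad\text{for all } |\lambda|\le c_0/\sigma_{\max}.
\]
To prove it, I would first turn the $\psi_1$ condition into moment growth, $\EE|X_i|^p\le (C\sigma_{\max}p)^p$ for all $p\ge1$. This comes from integrating the tail bound $\PP(|X_i|>u)\le 2e^{-u/(C\sigma_{\max})}$, which is Markov's inequality applied to $e^{|X_i|/(C\sigma_{\max})}$. Next I expand $\EE e^{\lambda X_i}=1+\lambda\EE X_i+\sum_{p\ge2}\lambda^p\EE X_i^p/p!$. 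The linear term vanishes because $X_i$ is centered. Using $p!\ge (p/e)^p$, the remaining sum is bounded by a geometric series $\sum_{p\ge2}(Ce\lambda\sigma_{\max})^p$. This series is at most $2(Ce\lambda\sigma_{\max})^2$ once $|\lambda|\le 1/(2Ce\sigma_{\max})$. Finally $1+x\le e^x$ gives the claimed bound.

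With this in hand, independence and Markov's inequality give, for $0\le\lambda\le c_0/\sigma_{\max}$,
\[
\PP\Bigl(\tfrac1n\textstyle\sum_i X_i\ge t\Bigr)\le e^{-\lambda nt}\prod_i\EE e^{\lambda X_i}\le \exp\bigl(-n(\lambda t - C_0\lambda^2\sigma_{\max}^2)\bigr).
\]
I then take $\lambda=\min\{t/(2C_0\sigma_{\max}^2),\,c_0/\sigma_{\max}\}$. In the first case the exponent is $-nt^2/(4C_0\sigma_{\max}^2)$. In the second case $t\ge 2C_0c_0\sigma_{\max}$, so $C_0\lambda^2\sigma_{\max}^2\le \lambda t/2$ and the exponent is at most $-nc_0t/(2\sigma_{\max})$. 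In both cases this is bounded by $-c\,n\min\{t^2/\sigma_{\max}^2,\,t/\sigma_{\max}\}$, which is the stated inequality.

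The main obstacle is the local MGF bound. Specifically, one has to track the constants from the $\psi_1$ condition through the moment bounds so that the Taylor series is dominated by its quadratic term on a $\lambda$-range of order $1/\sigma_{\max}$. Everything afterward, namely tensorization and the two-regime optimization over $\lambda$, is routine.
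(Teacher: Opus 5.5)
Your proof is correct, and it is the standard Chernoff argument behind the cited result in Vershynin, which the paper invokes by citation rather than proving: a local bound $\mathbb{E}e^{\lambda X_i}\le e^{C_0\lambda^2\sigma_{\max}^2}$ for $|\lambda|\le c_0/\sigma_{\max}$, tensorization over independent summands, and the two-regime choice of $\lambda$. You were also right to add independence, since the lemma as stated omits it and fails without it, and to read $\sigma_{\max}$ as a two-sided $\psi_1$-norm bound rather than the one-sided definition in the notation section.
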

	Since $y_i\sim N(0,\sigma^2 f^*(\bx_i)^2)$ and $f^*$ is bounded from above by $f_{\max}$, the above implies that with probability at least
	$1-\exp(-cn)$, we have
	\[
	\frac{1}{n}\sum_{i=1}^n y_i^2 = O(1).
	\]
	Hence, $\frac 1n\sum_{i=1}^n|\ell_i'(\tilde f_i)|$ is also $O(1)$ with a high probability.
	Substituting the above in \eqref{eq:m8} we get
	\begin{align}
		\label{eq:m27}
	|\ell(f_{\phihat}) - \ell(f_\phitilde)| \le C(M,K,L)B^L(N+1)^{L+1}\delta
	\leq \frac C{n^6},
	\end{align}
	with probability at least $1-\exp(-cn)$, for constants $c,C>0$, where we used $\delta = \frac 1{n^{(c_4+1)L+8}}$ from \eqref{eq:delta-value}.
	
	\subsection{Bounding the difference between the expected likelihoods from below}
	\label{app:expected-likelihood-diff}
	We first provide a lower bound to $\bar\ell(f_\phitilde)
	-\bar\ell(f^*)$. We use the following result, provided in \prettyref{app:technical}.
	\begin{lemma}\label{lmm:ineq1}
		$\inf_{x\in (0,\frac 1{2a}]}x -1 -\log x-a(x-1)^2\geq 0$ for $a<\frac 12$.
	\end{lemma}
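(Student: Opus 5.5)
The plan is a one-variable calculus argument: factor the derivative of $g(x)=x-1-\log x-a(x-1)^2$, read off its sign on either side of $x=1$, and conclude that the minimum over the interval is attained at $x=1$, where $g(1)=0$.

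First I dispose of the degenerate range $a\le 0$. There the interval $(0,\frac1{2a}]$ is empty or undefined for $a<0$, and it is read as $(0,\infty)$ for $a=0$. In this case $-a(x-1)^2\ge 0$, so $g(x)\ge x-1-\log x\ge 0$ for every $x>0$, by the elementary inequality $\log x\le x-1$. So the substantive case is $0<a<\frac12$, which I assume from now on.

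For $x>0$ I differentiate:
\[
g'(x)=1-\frac1x-2a(x-1)=(x-1)\Bigl(\frac1x-2a\Bigr),
\]
which follows from $1-\frac1x=\frac{x-1}{x}$. The key observation is that $a<\frac12$ gives $\frac1{2a}>1$, so the point $x=1$ lies strictly inside $(0,\frac1{2a}]$. I then read off the sign of $g'$ on the two pieces:
\begin{itemize}
\item On $(0,1)$: $x-1<0$, and $\frac1x>1>2a$ so $\frac1x-2a>0$. Hence $g'<0$ and $g$ is strictly decreasing there.
\item On $[1,\frac1{2a}]$: $x-1\ge 0$, and $x\le\frac1{2a}$ gives $\frac1x\ge 2a$. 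Hence $g'\ge 0$ and $g$ is nondecreasing there.
\end{itemize}

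Since $g$ is continuous on $(0,\frac1{2a}]$, decreasing up to $x=1$ and nondecreasing afterwards, its infimum over the interval is $g(1)=1-1-0-0=0$. This gives $\inf_{x\in(0,\frac1{2a}]}g(x)\ge 0$, as claimed. As a side remark, $g(x)\to+\infty$ as $x\downarrow 0$, so the infimum is genuinely attained at $x=1$.

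There is no real obstacle in this argument. The only point needing care is checking that the right endpoint $\frac1{2a}$ is exactly where the factor $\frac1x-2a$ would change sign; that is precisely why the domain restriction $x\le\frac1{2a}$ is imposed. Beyond that endpoint, the quadratic term eventually dominates the linear term and $g$ becomes negative.
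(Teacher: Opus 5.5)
Your proof is correct and is essentially the paper's argument: the paper also factors $g'(x)=-2a(x-1)(x-\frac{1}{2a})/x$, reads off $g'<0$ on $(0,1)$ and $g'>0$ on $(1,\frac{1}{2a}]$, and concludes from $g(1)=0$. Two small points where you are more careful than the paper: you treat $a\le 0$ separately, and you keep the constant $-1$ in $g$, which the paper's written definition of $g$ drops even though it then uses $g(1)=0$.
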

	Using \prettyref{lmm:ineq1} with
	$
	a=\frac{1}{2M^4},
	$
	we get that whenever $0\le
	\frac{f^*(\bx)}{f_\phitilde(\bx)}
	\le M^2,$ which is satisfied in view of the assumptions on the range of $f^*,f_\phitilde$, we use \eqref{eq:calL} to get
	\begin{align}
		\label{eq:ineq-diff-lower}
		&\sum_{i=1}^n
		\left[
		\left(\frac{f^*(\bx_i)}{f_\phitilde(\bx_i)}\right)^2
		-1
		-\log\left(\frac{f^*(\bx_i)^2}{f_\phitilde(\bx_i)^2}\right)
		\right]
		\\
		&\ge
		a\sum_{i=1}^n
		\left(
		\frac{f^*(\bx_i)^2}{f_\phitilde(\bx_i)^2}-1
		\right)^2
		=
		a\sum_{i=1}^n
		\frac{(f^*(\bx_i)^2-f_\phitilde(\bx_i)^2)^2}{f_\phitilde(\bx_i)^4}
		\ge
		\frac{1}{2M^4}
		\sum_{i=1}^n
		\frac{(f^*(\bx_i)^2-f_\phitilde(\bx_i)^2)^2}
		{f_\phitilde(\bx_i)^2 f^*(\bx_i)^2}
		=
		\frac{\calL}2\frac{1}{M^8}.
		\nonumber
	\end{align}

	\subsection{Combining the results}
	\label{sec:combining}
	Combining the inequalities \eqref{eq:m7}, \eqref{eq:ineq-approx}, \eqref{eq:m27}, \eqref{eq:ineq-diff-lower}, in view of \eqref{eq:ineq-main}, we get with probability $1-\frac{1}{n^c}$
	\begin{align}
		\label{eq:ineq-combine}
		C_0\frac{\calL}{2n}
		&\leq
		\sqrt{\frac{A\log n}{n}}\sqrt{\frac{\calL}{n}}
		+ \frac{A\log n}{n}
		+
		\pth{n\over \log n}^{-{2\gamma^*\over 2\gamma^*+1}}
		\lesssim
		\sqrt{\frac{A\log n}{n}}\sqrt{\frac{\calL}{n}}
		+
		\pth{n\over \log n}^{-{2\gamma^*\over 2\gamma^*+1}},
	\end{align}
	for a constant $C_0=C_0(f_{\min},f_{\max},M)>0$, where the last inequality follows from using $N=\Theta\pth{(n/\log n)^{1\over 4\gamma^*+2}}$ and
	\begin{align}
		\label{eq:m28}
	{A\log n\over n}
	\approx\pth{N^2\log n\over n}
	\approx\pth{(n/\log n)^{-1+{2\over 4\gamma^*+2}}}
	\approx \pth{(n/\log n)^{-{2\gamma^*\over 2\gamma^*+1}}}.
	\end{align}
	Next, we note the fact that $ax^2\leq bx+c,a,b,c,x>0$ implies 
	$$x^2
	\leq \pth{b+\sqrt{b^2+4ac}\over 2a}^2
	\leq 
	2\pth{{b^2\over 4a^2}+{b^2+4ac\over 4a^2}}
	\leq 
	{b^2\over a^2} + {2c\over a}.$$ 
	In view of the above with $x={\calL\over n}$, \eqref{eq:ineq-combine} and \eqref{eq:m28} imply that for constants $C,\tilde C>0$, we have 
	\begin{align}
		\|f_\phitilde - f^*\|_n^2
		\leq 
		{\calL\over n} 4M^2
		\leq
		C\left({A\log n\over n} + \pth{n\over \log n}^{-{2\gamma^*\over 2\gamma^*+1}}\right)
		\leq 
		\tilde C \pth{n\over \log n}^{-{2\gamma^*\over 2\gamma^*+1}}.
	\end{align}
	In view of \prettyref{lmm:lipschitz} and $(N+1)^{L+1}\delta=o(\frac 1n)$, we use $(x+y)^2\leq 2(x^2+y^2)$ to get
	\begin{align*}
		\|f_\phihat - f^*\|_n^2
		&
		\leq
		2\|f_\phitilde - f^*\|_n^2
		+
		2\|f_\phitilde - f_\phihat\|_n^2
		\nonumber\\
		&
		\leq 
		2\tilde C \pth{n\over \log n}^{-{2\gamma^*\over 2\gamma^*+1}}
		+
		2\sup_{\bx\in [-K,K]^d}
		|f_\phitilde(\bx)-f_\phihat(\bx)|^2
		\notag\\
		&\stepa{\leq} 
		2\tilde C \pth{n\over \log n}^{-{2\gamma^*\over 2\gamma^*+1}}
		+
		\bar C(N+1)^{2(L+1)}\|\phihat-\phitilde\|_\infty^2
		{\leq} \breve C \pth{n/\log n}^{-{2\gamma^*\over 2\gamma^*+1}},
	\end{align*}
	 where $\bar C,\breve C>0$ are some constants. This completes the proof.

	\section{Proof of \prettyref{thm:sparse}}
	\label{app:theory-sparse}
	
	Throughout our proof we use $\tilde C,\breve C,\bar C$, with different subscripts, as positive constants depending on $f_{\min},f_{\max},M$, as necessary.
	
	In view of the exposition in \prettyref{sec:likelihood-diff}, we establish error guarantees for the neural network estimator $\hat f^\sparse= f_{\phihat}$ that minimizes the $\ell_1$-penalized negative log-likelihood $\ell(f)$ as in \eqref{eq:likelihood-reduced}, given by
	\begin{align}
		\label{eq:likelihood-reduced-penal}
	\hat f^\sparse(\bx)
	=f_{\phihat}(\Tr_M(\hat\Theta^\top \bx)),
	\quad
	f_\phihat,\Thetahat \in \argmin_{f_\phi\in \calG(L,d,1,N,M,B)\atop \Theta\in \reals^{d\times N}}
	\ell\pth{f_\phi|\sth{(y_i,\Tr_M\{\Theta^\top \bx_i\})}_{i=1}^n}
	+\lambda\sum_{i,j}\psi_\tau(\Theta_{i,j}).
	\end{align}
	
	Suppose that \(\calJ \subset \{1,\dots,n\}\) is the set of active indices in the support of \(f^*\). We done by $\bx_\calJ$ as the active coordinates of the feature vector $\bx$ and $\bx_{i,\calJ}$ denote the active components of the observed feature $\bx_i$ for $i=1,\dots,n$. Then we have
	\begin{align}
		\label{eq:true_sparse}
		f^*(\bx) = f^*(\bx_\calJ),
	\end{align}
	Consider as $\phi^*$ the parameter set of the DNN that is closest to the true data generating function $f^*$ over the feature vector space $\bx_\calJ$. In view of \prettyref{lmm:approx-smooth} we note that $\phistar$ can be chosen in such a way that
	\begin{align}
		\label{eq:approx-error-highdim}
		\sup_{\bx_\calJ\in [-K,K]^{|\calJ|}}\|f_{\phistar}(\bx_\calJ)-f^*(\bx_\calJ)\| < N^{-2\gamma^*},
	\end{align}
	where $\gamma^*$ is the degree adjusted smoothness defined in \prettyref{def:dim-adjusted-smoothness}.
	Let
	\[
	\calJ = \{l_1,\dots,l_{|\calJ|}\},
	\quad 
	\Thetastar_{ij} =
	\mathbf{1}\{ 1\leq i \le |\calJ|,\ j = l_i \}.
	\] 
	Given the minimizer $\hat \phi,\Thetahat$ of the optimization \eqref{eq:likelihood-reduced-penal}, let 
	\begin{align}
		\hat f^{\sparse}(\bx) = f^{\sparse}(\bx;\phihat,\hat\Theta)
		=f_\phihat(\Tr_M(\hat\Theta^\top \bx)).
	\end{align} 
	Using the parameters $\phistar,\Thetastar$, and noting that $\Thetastar$ has $|\calJ|$-many nonzero entries and $f_\phistar(\Tr_M(\Theta^{*\top} \bx))=f_\phistar(\bx),\sum_{i,j}\psi_\tau(\hat\Theta^*_{i,j})=|\calJ|$, we have
	\begin{align}
		\label{eq:m21}
		&\ell(\hat f^\sparse)
		+\lambda\sum_{i,j}\psi_\tau(\hat\Theta_{i,j})
		\le
		\ell\pth{f_\phistar}
		+
		\lambda |\calJ|
		.
	\end{align}

	Similar to \eqref{eq:ineq-main}, for any $\phitilde,\Thetatilde$, with $\tilde f^\sparse(\bx) =f^{\text{SPN}}(\bx;\phitilde,\Thetatilde)=f_{\phitilde}(\Tr_M(\Thetatilde^\top \bx))$, using \eqref{eq:likelihood-difference} we get
	\begin{align}
		\label{eq:idendity-main-highdim}
		\bar\ell(\tilde f^\sparse)
		-\bar\ell(f^*)
		&=
		\frac{
			\by^\top (\Sigmastar
			- \Sigmatilde^\sparse) \by
			-
			\E \big[ \by^\top (\Sigmastar
			- \Sigmatilde^\sparse) \by \big]
		}{n}
		+\ell(\tilde f^\sparse)- \ell(f^*),
	\end{align}
	where $\bar \ell(f)$ is as defined in \eqref{eq:cond_expectation} and we use the notations
	\[
	\Sigmatilde^\sparse
	=
	\diag\!\left(
	\{
	f_{\phitilde}(\Tr_M(\Thetatilde^{\top}\bx_i))^{-2}
	\}_{i=1}^{n}
	\right),
	\quad
	\Sigmastar
	=
	\diag\!\left(
	\{
	f^*(\bx_{i,\calJ})^{-2}\}_{i=1}^{n}
	\right).
	\]
	To bound $\ell(\tilde f^\sparse)$ using \eqref{eq:m21}, we use
	\begin{align*}
		\ell(\tilde f^\sparse)
		&=
		\ell(\hat f^\sparse)
		+\lambda\sum_{i,j}\psi_\tau(\hat \Theta_{i,j})
		+
		\ell(\tilde f^\sparse)
		-
		\ell(\hat f^\sparse)
		-\lambda\sum_{i,j}\psi_\tau(\hat \Theta_{i,j})
		\notag\\
		&\leq
		\ell\pth{f_\phistar}
		+\lambda|\calJ|
		+
		\ell(\tilde f^\sparse)
		-
		\ell(\hat f^\sparse)
		-\lambda\sum_{i,j}\psi_\tau(\hat \Theta_{i,j}).
	\end{align*}
	 
	In view of the above, we continue \eqref{eq:idendity-main-highdim} to get fo
	\begin{align}
		\label{eq:ineq-main-highdim}
		\bar\ell(\tilde f^\sparse)
		-\bar\ell(f^*)
		&\leq
		\frac{
			\by^\top (\Sigmastar
			- \Sigmatilde^\sparse) \by
			-
			\E \big[ \by^\top (\Sigmastar
			- \Sigmatilde^\sparse) \by \big]
		}{n}
		-\lambda\sum_{i,j}\psi_\tau(\Thetahat_{i,j})
		+\lambda|\calJ|
		\notag\\
		&\quad 
		+
		\ell(f_{\phi^*})-\ell(f^*)
		+\ell(\tilde f^\sparse)- \ell(\hat f^\sparse).
	\end{align}
	
	In view of the analysis in \prettyref{app:expected-likelihood-diff}, using that $f^\ast$ and $\tilde f^\sparse$ are bounded by $\frac 1M,M, f_{\min},f_{\max}$, we get that there is a constant $C_1=C_1(f_{\min},f_{\max},M)$ such that
	\begin{align}
		\label{eq:m15}
		\bar\ell(\tilde f^\sparse)
		-\bar\ell(f^*)
		\ge
		\frac{C_1}{n}
		\sum_{i=1}^{n}
		\left(
		f^*(\bx_i) - \tilde f^\sparse(\bx_i)
		\right)^2
		=C_1\|\hat f^\sparse - f^*\|_n^2.
	\end{align}
	Note that the above result is a universal bound and does not have any probabilistic component. 
	Using the last display, we continue \eqref{eq:ineq-main-highdim} to get that for any $\phitilde,\Thetatilde$ with $\tilde f^\sparse(\bx)=f_{\phitilde}(\Tr_M(\Thetatilde^\top \bx))$
	\begin{align}
		C_1\|\hat f^\sparse - f^*\|_n^2
		&\leq
		\frac{
			\by^\top (\Sigmastar
			- \Sigmatilde^\sparse) \by
			-
			\E \big[ \by^\top (\Sigmastar
			- \Sigmatilde^\sparse) \by \big]
		}{n}
		-\lambda\sum_{i,j}\psi_\tau(\Thetahat_{i,j})
		\notag\\
		&\quad 
		+\lambda|\calJ|
		+
		\ell(f_{\phi^*})-\ell(f^*)
		+\ell(\tilde f^\sparse)- \ell(\hat f^\sparse).
	\end{align} 
	Specializing the above display with $\phitilde=\phihat,\Thetatilde=\Thetahat$, we get
	\begin{align}
		\label{eq:ineq-main-hat}
		C_1\|\hat f^\sparse - f^*\|_n^2
		&\leq
		\frac{
			\by^\top (\Sigmastar
			- \Sigmahat^\sparse) \by
			-
			\E \big[ \by^\top (\Sigmastar
			- \Sigmahat^\sparse) \by \big]
		}{n}
		-\lambda\sum_{i,j}\psi_\tau(\Thetahat_{i,j})
		\notag\\
		&\quad
		+\lambda|\calJ| 
		+
		\ell(f_{\phi^*})-\ell(f^*).
	\end{align} 
	Next, we will use empirical process theory and a peeling device argument to bound
	\begin{align}
		\label{eq:term-estimation-1}
		T(\phihat,\Thetahat)
		=\frac{
			\by^\top (\Sigmastar
			- \Sigmahat^\sparse) \by
			-
			\E \big[ \by^\top (\Sigmastar
			- \Sigmahat^\sparse) \by \big]
		}{n}
		-\lambda\sum_{i,j}\psi_\tau(\Thetahat_{i,j}).
	\end{align}
	To this end, define the sets \(\calG_m, \mathcal{G}_{m,s} \) as follows:
	\begin{align}
		\mathcal{G}_m 
		&=
		\left\{
		f = f^{\sparse}(\cdot; g, \Theta) :
		g \in \mathcal{G}_{L,1,N,N,M,B},
		\ \Theta \in \mathbb{R}^{d\times N},
		\ \|\Theta\|_{\max} \le B
		\right\},
		\notag\\
		\mathcal{G}_{m,s}
		&=
		\left\{
		f = f^{\sparse}(\cdot; g, \Theta)
		\in \mathcal{G}_m :
		\sum_{i,j} \psi_\tau(\Theta_{ij}) \le s,
		\ \|\Theta\|_{\max} \le B
		\right\}.
	\end{align}
	We will extensively use the following result on the covering number bound.
 
	\begin{lemma}\cite[Lemma 7, without the diversified projection component]{fan2024factor}
		\label{lmm:g-infty-cover-number}
		There exists a universal constant $c_1$ such that for any $\delta > 2\tau KB^{L+1}N^{L+2}d$ and $N, L \ge 2$, 
		\begin{align*}
			\log |\mathcal{N}(\delta, \mathcal{G}_{m,s}, \norm{\cdot}_{\infty, [-K,K]^d})| \le c_1 \Bigg\{ &(N^2L) \left[L \log BN + \log \left(\frac{M\lor K}{\delta} \lor 1\right)\right] \\
			&~~~~~ + s \left[L \log (BN) + \log d + \log\left(\frac{KN}{\delta} \lor 1\right)\right] \Bigg\}.
		\end{align*}
	\end{lemma}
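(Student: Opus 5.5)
The statement to prove is \prettyref{lmm:g-infty-cover-number}, a covering-number bound for $\mathcal{G}_{m,s}$. I would reduce it to a parameter-space covering combined with a Lipschitz estimate. The idea is to build a finite net separately over the network weights $\phi$ and over the sparse matrices $\Theta$, and then control the sup-norm distance between $f^{\sparse}(\cdot;\phi,\Theta)$ and $f^{\sparse}(\cdot;\phi',\Theta')$ by perturbation bounds.

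\textbf{Network weights.} The first step is the analogue of \prettyref{lmm:lipschitz} for inputs $\bz=\Tr_M(\Theta^\top\bx)\in[-M,M]^N$. For the inner network $g\in\calG(L,N,1,N,M,B)$ this gives
\[
\sup_{\bz\in[-M,M]^N}|g_\phi(\bz)-g_{\phi'}(\bz)|\le C(M)B^L(N+1)^{L+1}\|\phi-\phi'\|_\infty .
\]
There are $O(N^2L)$ parameters, each lying in $[-B,B]$. A grid of mesh $\delta/(C B^L(N+1)^{L+1})$ on each parameter therefore has log-cardinality
\[
O\bigl(N^2L[L\log(BN)+\log((M\vee K)/\delta\vee1)]\bigr).
\]
This is the first term of the bound.

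\textbf{Sparse matrices $\Theta$.} The second step exploits the surrogate penalty. Each entry with $|\Theta_{ij}|\ge\tau$ contributes $1$ to $\sum\psi_\tau$, so at most $s$ entries exceed $\tau$. Define $\Theta^{(\tau)}$ from $\Theta$ by zeroing all entries with $|\Theta_{ij}|<\tau$. For $\bx\in[-K,K]^d$ we then have
\[
\|\Theta^\top\bx-\Theta^{(\tau)\top}\bx\|_\infty\le \tau K d .
\]
Propagating this input perturbation through the network costs a Lipschitz factor of order $(BN)^{L+1}$, since $\Tr_M$ is $1$-Lipschitz and each layer multiplies by at most $NB$. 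The resulting sup-norm error is of order $\tau KB^{L+1}N^{L+2}d$, and this is at most $\delta/2$ exactly under the hypothesis $\delta>2\tau KB^{L+1}N^{L+2}d$. It therefore suffices to cover matrices with at most $s$ nonzero entries, each bounded by $B$. There are $\binom{dN}{s}\le (dN)^s$ choices of support. On each support, a grid of mesh $\delta/(KN(BN)^{L+1})$ for the nonzero values perturbs $\Theta^\top\bx$ by at most $sK\cdot$mesh per coordinate, and hence perturbs the output by at most $\delta/4$ after adjusting constants. The log-cardinality is $s[\log d+\log N+L\log(BN)+\log(KN/\delta\vee1)]$, which is the second term.

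\textbf{Combining.} Take the product net and apply the triangle inequality across three pieces: the $\tau$-thresholding error, the $\Theta$-discretization error (through the Lipschitz constant of $g_{\phi'}$ in its input), and the $\phi$-discretization error (through the parameter Lipschitz bound on $[-M,M]^N$). This yields a $\delta$-cover whose log-size is the sum of the two terms.

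\textbf{Main obstacle.} The step I expect to be hardest is bookkeeping the input-Lipschitz constant of the network uniformly, so that it is of order $(BN)^{L+1}$ rather than depending on $\Theta$. The truncation $\Tr_M$ is what guarantees that the inputs remain in a bounded box, so the $\phi$-Lipschitz bound applies regardless of $\Theta$. A second subtlety is that the cover element itself must lie in a consistent class; I would not require it to belong to $\mathcal{G}_{m,s}$, since external covers suffice up to a factor of $2$ in $\delta$.
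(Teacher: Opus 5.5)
Your argument is correct and is essentially the standard proof behind \cite[Lemma 7]{fan2024factor}; the paper itself gives no proof and only imports that result. The steps match that proof: zero out the entries with $|\Theta_{ij}|<\tau$, so that at most $s$ entries survive and the induced sup-norm error of order $\tau K d\,(BN)^{L+1}$ is exactly what the hypothesis on $\delta$ absorbs; then count supports and grid both the surviving entries of $\Theta$ and the network weights, using the input-Lipschitz bound $(BN)^{L+1}$ and the parameter-Lipschitz bound on $[-M,M]^N$.

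One bookkeeping fix is needed. With mesh $\delta/(KN(BN)^{L+1})$, a column of $\Theta$ with up to $\min(s,d)$ nonzeros gives output error up to $\min(s,d)\,\delta/N$, not $\delta/4$. Take mesh $\delta/(4dK(BN)^{L+1})$ instead. This only adds $\log d$ per nonzero entry, which the second term of the bound already covers.
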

	Denote
	\begin{align}
		\label{eq:m12}
		\begin{gathered}
			\tilde A_\delta 
			=n(\nu_{n,\delta}+s\varrho_{n,\delta})
			\\
			\varrho_{n,\delta}
			=
			\frac{1}{n}
			\left[
			L\log BN
			+ \log d
			+ \log\!\left(\frac{KN}{\delta}\vee1\right)
			\right],
			\nu_{n,\delta}
			=
			\frac{N^2L}{n}
			\left[
			L\log BN
			+
			\log\!\left(\frac{MNK}{\delta}\vee1\right)
			\right].
		\end{gathered}
	\end{align}
	Note that by our choice of $\tau$ in \prettyref{thm:sparse}, we get that \prettyref{lmm:g-infty-cover-number} is satisfied for 
	\begin{align}
		\label{eq:delta-sparse}
		\delta = \Theta\pth{\frac 1{n^3}}.
	\end{align}
	
	In view of an analysis similar to \eqref{eq:ineq-estimation}, we get that for all $t>0$, with probability $1-e^{-t}$, 
	\begin{align}
		\label{eq:ineq-estimation-highdim}
		\abs{\by^\top(\Sigmastar-\Sigmatilde^\sparse)\by
			-
			\mathbb{E}\!\left[
			\by^\top(\Sigmastar-\Sigmatilde^\sparse)\by
			\right]}
		\leq C_2(f_{\min},f_{\max},M)\sth{
			\sqrt{(\tilde A_\delta +t)\tilde \calL^\sparse}
			+ \tilde A_\delta +t}
	\end{align}
	for all $\tilde f^\sparse$ in the a minimal $\delta$-covering set $\calN(\delta,\calG_{m,s},\norm{\cdot}_{\infty,[-K,K]^d})$ of $\calG_{m,s}$, and we use the notation
	\begin{align}
		\label{eq:L-spn}
		\tilde \calL^\sparse
		= \sum_{i=1}^{n}
		\left(
		f^*(\bx_i) - \tilde f^\sparse(\bx_i)
		\right)^2.
	\end{align}
	Next, we use Young's inequality \cite{young1912classes} to get
	\begin{align}
		2\sqrt{(\tilde A_\delta +t)\tilde \calL^\sparse}
		\leq 
		\epsilon\tilde \calL^\sparse
		+{\tilde A_\delta +t\over \epsilon},
		\quad\epsilon>0.
	\end{align}
	In view of the above and \eqref{eq:m12}, we can continue \eqref{eq:ineq-estimation-highdim} to get for any $\epsilon>0$, with probability $1-e^{-t}$,
	\begin{align}
		\label{eq:ineq-estimation-highdim-2}
		&\frac 1n\abs{\by^\top(\Sigmastar-\Sigmatilde^\sparse)\by
			-
			\mathbb{E}\!\left[
			\by^\top(\Sigmastar-\Sigmatilde^\sparse)\by
			\right]}
		\\
		&\leq C_2
		\sth{
			\frac \epsilon2 {\tilde\calL^\sparse\over n}
			+\pth{1+\frac 1{2\epsilon}}{\pth{\nu_{n,\delta}+s\varrho_{n,\delta} + \frac tn}}},
		\quad 
		\forall \ \tilde f^\sparse\in \calN(\delta,\calG_{m,s},\norm{\cdot}_{\infty,[-K,K]^d}),
		\notag
	\end{align}
	for a constant $C_2=C_2(f_{\min},f_{\max},M)>0$. Then, the above inequality leads to the following result.
	\begin{lemma}
		\label{lmm:chaining}
		Consider any $\gamma\geq \delta$, where $\delta$ is as in \eqref{eq:delta-sparse}. 
		Conditioned on fixed $\bx_1,\cdots, \bx_n$, define
	\begin{align*}
		Z_{s, \gamma} := 
		\sup_{f^\sparse\in \mathcal{G}_{m,s}, \| f^\sparse- f^*\|_n \le \gamma} \frac 1n\abs{\by^\top(\Sigmastar-\Sigma^\sparse)\by
			-
			\mathbb{E}\!\left[
			\by^\top(\Sigmastar-\Sigma^\sparse)\by
			\right]}.
	\end{align*} 
	where $f^\sparse$ is a representative DNN from $\calG_{m,s}$ with parameters $\phi,\Theta$, and 
	\begin{align*}
		\Sigma
		=
		\diag\!\left(
		\{
		f_\phi(\Tr_M(\Theta^{\top}\bx_i))^{-2}\}_{i=1}^{n}
		\right).
	\end{align*} 
	Then there exists a constant $C_3>0$, such that for any $\epsilon>0$, we have
	\begin{align}
		\label{eq:claim-step4.1}
		\begin{gathered}
			\mathbb{P}\left(\mathcal{B}_{t,\epsilon}(s,\gamma)\right) \ge 1-2e^{-t}, ~~\text{where}~~ \mathcal{B}_{t,\epsilon}(s,\gamma) = \left\{Z_{s, \delta} 
			\le 
			C_3 \left[\epsilon\gamma^2  + (1+\frac 1{2\epsilon})(\nu_n+s\varrho_n+\frac tn)\right]\right\},\\
			\nu_n = {L^2N^2\log(BNn)\over n},
			\quad
			\varrho_n={\log(Ndn)+L\log(BN)\over n}.
		\end{gathered}
	\end{align}
	\end{lemma}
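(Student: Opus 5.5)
The plan is a standard discretization argument: reduce the supremum defining $Z_{s,\gamma}$ to a maximum over a finite $\delta$-net of $\mathcal{G}_{m,s}$, apply the uniform bound \eqref{eq:ineq-estimation-highdim-2} on the net, and control the discretization error deterministically on a high-probability event for $\by$. (The statement writes $Z_{s,\delta}$ inside $\mathcal{B}_{t,\epsilon}(s,\gamma)$; I will prove the bound for $Z_{s,\gamma}$, which covers the literal statement because $\gamma\ge\delta$ makes $Z_{s,\delta}\le Z_{s,\gamma}$.)

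\emph{Step 1: the net.} Fix a minimal $\delta$-covering $\calN=\calN(\delta,\calG_{m,s},\norm{\cdot}_{\infty,[-K,K]^d})$ with $\delta=\Theta(n^{-3})$ as in \eqref{eq:delta-sparse}. The choice of $\tau$ in \prettyref{thm:sparse} makes \prettyref{lmm:g-infty-cover-number} applicable. With this $\delta$, $\log(KN/\delta)\lesssim \log(Nn)$ and $\log(MNK/\delta)\lesssim\log(Nn)$. Hence $\nu_{n,\delta}\lesssim \nu_n$ and $\varrho_{n,\delta}\lesssim\varrho_n$, where $\nu_n,\varrho_n$ are as in \eqref{eq:claim-step4.1}, up to constants absorbed in $C_3$.

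\emph{Step 2: the bound on the net.} By \eqref{eq:ineq-estimation-highdim-2}, with probability $1-e^{-t}$, every net element $\tilde f^\sparse$ satisfies
\[
\frac1n\abs{\by^\top(\Sigmastar-\Sigmatilde^\sparse)\by-\E[\cdot]}\le C_2\Big\{\tfrac\epsilon2\|\tilde f^\sparse-f^*\|_n^2+(1+\tfrac1{2\epsilon})(\nu_n+s\varrho_n+\tfrac tn)\Big\}.
\]
Take $f^\sparse\in\calG_{m,s}$ with $\|f^\sparse-f^*\|_n\le\gamma$, and let $\tilde f^\sparse$ be its net point, so that $\sup_{\bx}|f^\sparse-\tilde f^\sparse|\le\delta$. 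Then $\|\tilde f^\sparse-f^*\|_n\le\gamma+\delta\le2\gamma$, since $\gamma\ge\delta$. Therefore $\|\tilde f^\sparse-f^*\|_n^2\le4\gamma^2$, which gives the $\epsilon\gamma^2$ term.

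\emph{Step 3: the discretization error.} All functions in the class take values in $[M^{-1},M]$. Hence $|f^{-2}-\tilde f^{-2}|\le 2M^3|f-\tilde f|\le2M^3\delta$ pointwise, and so $\|\Sigma^\sparse-\Sigmatilde^\sparse\|_2\le2M^3\delta$. This gives
\[
\frac1n\abs{\by^\top(\Sigmatilde^\sparse-\Sigma^\sparse)\by}\le 2M^3\delta\cdot\frac1n\sum_i y_i^2 ,
\]
and similarly the expectation term is bounded by $2M^3\delta f_{\max}^2$. By \prettyref{lmm:bernstein}, $\frac1n\sum_i y_i^2=O(1)$ with probability $1-e^{-cn}$; for $t\lesssim n$ this is at least $1-e^{-t}$. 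On that event the discretization error is $O(\delta)=O(n^{-3})\le \nu_n$, so it is absorbed. If $t\gg n$, I instead use $\frac1n\sum_i y_i^2\lesssim 1+t/n$ with probability $1-e^{-t}$, and the resulting $\delta t/n$ is again absorbed in $t/n$.

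A union bound over the two events gives probability $1-2e^{-t}$, and taking the supremum yields the claim. I expect the main obstacle to be bookkeeping rather than substance. The uniform net bound \eqref{eq:ineq-estimation-highdim-2} already relies on the Hanson--Wright step, with $\tilde B$ having $\|\tilde B\|_{HS}^2\lesssim \tilde\calL^\sparse$ and $\|\tilde B\|_2=O(1)$, together with a union bound over $|\calN|\le e^{\tilde A_\delta}$. The points needing care are two: that $\delta$ meets the constraint $\delta>2\tau KB^{L+1}N^{L+2}d$ in \prettyref{lmm:g-infty-cover-number}, and that the constraint $\|\Theta\|_{\max}\le B$ in $\calG_{m,s}$ keeps the covering finite.
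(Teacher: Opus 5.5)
Your proposal is correct and follows essentially the same route as the paper: take a $\delta$-net of $\mathcal{G}_{m,s}$, apply the uniform Hanson--Wright/union bound \eqref{eq:ineq-estimation-highdim-2} on the net (using $\nu_{n,\delta}\lesssim\nu_n$ and $\varrho_{n,\delta}\lesssim\varrho_n$), and absorb the $O(\delta)$ discretization error through a Bernstein bound on $\frac1n\sum_i y_i^2$, which gives probability $1-2e^{-t}$. Your use of $\|\tilde f^\sparse-f^*\|_n\le\gamma+\delta\le2\gamma$ is slightly more careful than the paper, which tacitly assumes the nearest net point already lies in the $\gamma$-ball, and your reading of $Z_{s,\delta}$ as $Z_{s,\gamma}$ is the version the peeling argument actually uses.
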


	The rest of the proof uses a standard peeling device argument \cite{geer2000empirical} to bound \eqref{eq:term-estimation-1}. We will peel the function class $\mathcal{G}_m$ with respect to $\sum_{i,j}\psi_\tau(\Theta_{i,j})$. More specifically, define
	\begin{align*}
		\mathcal{G}_{m,k,\ell} = \left\{\tilde f^\sparse \in \mathcal{G}_m, \alpha_{k-1} \le \sum_{i,j} \psi_\tau(\Theta_{i,j}) \le \alpha_k, \alpha_{\ell-1} \sqrt{\nu_n} < \|\tilde f^\sparse - f^*\|_n \le \alpha_{\ell} \sqrt{\nu_n} \right\}
	\end{align*} with $\alpha_i = 2^i$ for $i\ge 0$ and $\alpha_{i} = 0$ for $i=-1$ and note that
	$
		\mathcal{G}_m = \bigcup_{k=0}^{\lceil \log_2 (dN)\rceil} \bigcup_{\ell=0}^{\lceil \log_2 (2M/\sqrt{\nu_n})\rceil} \mathcal{G}_{m,k,\ell}.
	$
	Recall $\tilde \calL^\sparse$ from \eqref{eq:L-spn}. We first establish a lower bound on the probability of the event
	\begin{align*}
		\mathcal{B}_{t,\epsilon}(k,\ell) = \left\{\forall \tilde f^\sparse\in \mathcal{G}_{m,k,\ell}, T(\phitilde,\Thetatilde) \le C_4
		\sth{{\epsilon}\|\tilde f^\sparse - f^*\|_n^2
			+\frac 1{\epsilon}\pth{\nu_n+\varrho_n+\frac tn}}\right\},
	\end{align*} 
	some constant $C_4>0$. Note that $\tilde f^\sparse\in \mathcal{G}_{m,k,\ell}$ implies $\alpha_{k-1}\leq\sum_{i,j} \psi_\tau({\Theta}_{i,j}) \leq \alpha_{k}$ by the definition of the set $\mathcal{G}_{m,k,\ell}$. Then, on the event $\calB_{t,\epsilon}(\alpha_k,\alpha_\ell)$, for all the $\tilde f^\sparse\in \mathcal{G}_{m,k,\ell}$, we have 
	\begin{align*}
		T(\phitilde,\Thetatilde)
		&\leq 
		\frac 1n\abs{\by^\top(\Sigmastar-\Sigmatilde^\sparse)\by
			-
			\mathbb{E}\!\left[
			\by^\top(\Sigmastar-\Sigmatilde^\sparse)\by
			\right]} - \lambda \alpha_{k-1} \\
		&\overset{(a)}{\le}
		C_3 \left[\epsilon \alpha_\ell^2 \nu_n  + \pth{1+\frac 1{2\epsilon}}\pth{\nu_n+\alpha_k\varrho_n+\frac tn}\right] - \lambda \alpha_{k-1} \\
		&\stepb{\leq} C_3\qth{{\epsilon}(4 \alpha_{\ell-1}^2 + 1) \nu_n + \frac{2}{\epsilon} (\nu_n + \varrho_n  + \frac{t}{n})} + \left(2{C_3}\pth{1+\frac 1{2\epsilon}}\varrho_n -\lambda\right) \alpha_{k-1} \\
		&\stepc{\leq}
		 C_3\qth{4\epsilon\alpha_{\ell-1}^2\nu_n + \frac{3}{\epsilon} (\nu_n + \varrho_n  + \frac{t}{n})}
		\\
		&\stepd{\leq} 
		C_4\qth{\epsilon \|\tilde f^\sparse - f^*\|_n^2 + \frac{1}{\epsilon} (\nu_n + \varrho_n  + \frac{t}{n})},
	\end{align*} 
	where (a) follows from the definition of the event $\mathcal{B}_{t}(\alpha_k)$, and (b) uses the relationship $\alpha_{i} \le 2\alpha_{i-1} + 1$ between $\alpha_{i-1}$ and $\alpha_i$, (c) follows as we choose $\lambda>{\tilde C_3\over \epsilon}\varrho_n$ for a large enough constant $\tilde C_3$ and $\epsilon$ is of constant order, and(d) follows for a large constant $C_4>0$ as $\|\tilde f^\sparse - f^*\|_n\geq \alpha_{\ell-1}\sqrt \nu_n$ on the event$\calB_{t,\epsilon}(\alpha_k,\alpha_\ell)$. Hence we have
	\begin{align*}
		\mathbb{P}\left[\big(\mathcal{B}_{t,\epsilon}(k,\ell)\big)^c\right] \le \mathbb{P}\left[ \big(\mathcal{B}_{t,\epsilon}(\alpha_k,\alpha_\ell)\big)^c\right]\le 2e^{-t}. 
	\end{align*} 
	provided $\lambda \ge {\tilde C_3\over \epsilon}\varrho_n$. Then, it follows from a union bound that
	\begin{align*}
		&\mathbb{P}\left[\exists \tilde f^\sparse\in \mathcal{G}_{m}, \text{ such that } 
		T(\phitilde,\Thetatilde) > C_4\qth{{\epsilon}\|\tilde f^\sparse - f^*\|_n^2
		+\frac 1{\epsilon}\pth{\nu_n+\varrho_n+\frac tn}} \right]\\
		&
		\leq \sum_{k=0}^{\lceil \log_2 (pN)\rceil} \sum_{\ell=0}^{\lceil \log_2 (2M/\sqrt{\nu_n})\rceil} \mathbb{P}\left[\big(\mathcal{B}_{t,\epsilon}(k,\ell)\big)^c\right] \le C_5 \log(dN) \log (2Mn) e^{-t},  
	\end{align*} 
	for any $t>0$. Replacing $t$ by $t+C_6\log \left( \log(dN) \log (Mn)\right)$ for some large constant $C_6>0$ and noting that $\log \left(\log(dN) \log (Mn)\right) \lesssim n\varrho_n$, we have
	\begin{align*}
		&\mathbb{P}\left[
		T(\phitilde,\Thetatilde) < C_7\qth{{\epsilon}\|\tilde f^\sparse - f^*\|_n^2
			+\frac 1{\epsilon}\pth{\nu_n+\varrho_n+\frac tn}} \text{ for all } \tilde f^\sparse\in \mathcal{G}_{m}\right]
		\geq 1-e^{-t}.  
	\end{align*}
	This completes our proof of bounding $T(\phitilde,\Thetatilde)$ in \eqref{eq:term-estimation-1}, for all $\tilde f^\sparse\in \mathcal{G}_{m}$. Hence, choosing $\phitilde,\Thetatilde=\phihat,\Thetahat$, we get that with probability $1-e^{-t}$ we have
	\begin{align}
		T(\phihat,\Thetahat) < C_7\qth{{\epsilon}\|\hat f^\sparse - f^*\|_n^2
			+\frac 1{\epsilon}\pth{\nu_n+\varrho_n+\frac tn}}.
	\end{align}
	Combining the above bound with \eqref{eq:ineq-main-hat}, we get with probability $1-e^{-t}$, for any $\epsilon>0$
	\begin{align*}
		C_1 \|\hat f^\sparse - f^*\|_n^2
		\leq 
		C_7\qth{{\epsilon}\|\hat f^\sparse - f^*\|_n^2
			+\frac 1{\epsilon}\pth{\nu_n+\varrho_n+\frac tn}}
		+\lambda|\calJ|
		+
		\ell(f_{\phi^*})-\ell(f^*).
	\end{align*}
	
	Hence, we can choose $\epsilon$ to be a small enough positive constant (possibly depending on $f_{\min},f_{\max}$) such that the above expression simplifies to
	\begin{align}
		\label{eq:m16}
		C_8 \|\hat f^\sparse - f^*\|_n^2
		\leq 
		C_9{\pth{\nu_{n}+\varrho_{n} + \frac tn}}
		+\lambda|\calJ|
		+
		\ell(f_{\phi^*})-\ell(f^*).
	\end{align}	
	In view of \eqref{eq:ineq-approx} and the definition of $\nu_n$ in \prettyref{lmm:chaining}, with $t = c_5{\log n}$ the desired upper bound follows, with probability $1-\frac 1{n^{c_5}}$,
	\begin{align}
		\|\hat f^\sparse - f^*\|_n^2
		\leq C_{10}{N^2\log n\over n}
		\leq 
		\frac {C_{11}}n\pth{n\over \log n}^{1\over 2\gamma^*+1}\log n
		\leq
		C_{11} \pth{n\over \log n}^{-{2\gamma^*\over 2\gamma^*+1}}.
	\end{align}
	where $C_{7},C_8,\dots>0$ are appropriate constants as required.
	
	\section{Proof of \prettyref{prop:population-empirical-connection}}
	\label{app:proof-population-loss}

	Our proof uses the result \cite[Theorem 19.3]{gyorfi2002distribution}. We specialize the result below to use in our setup. 
	Let $\alpha={c\log n\over n}$ for some large constant $c>0$. Suppose that the following conditions hold
	\begin{itemize}
		\item 
		There is a function class $\calF$ such that for all $f\in \calF$ we have $\|f-f^*\|_\infty^2 \le K_1, \mathbb{E} [(f(\bx)-f^*(\bx))^4] \le K_2 \mathbb{E} [(f(\bx)-f^*(\bx))^2]$ for some constants $K_1,K_2\geq 1$.
		\item  Let $\calN(\delta,\calG,\norm{\cdot}_n)$ be the $\delta$-cover of $\calG$ with respect to the $\norm{\cdot}_n$ norm. Then, there exists a constant $\tilde c>0$, such that for any constant $\bar c>0$ and for all $\delta>{\alpha\over 8}$ we have
		$$
		\log \mathcal{N}\left(u, \left\{f\in \calF, \|(f-f^*)^2\|_n^2 \le 16\delta \right\}, \norm{\cdot}_n\right)
		\leq 
		n^{\frac 12-\tilde c},
		\quad
		u\in \qth{\bar c\delta,\sqrt \delta}.$$
	\end{itemize}
	Then, there is a constant $\tilde C>0$, such that for all $n$ large enough, we have for all $\eta>0$
	\begin{align*}
		\mathbb{P}\left[\sup_{f\in {\calF}} \frac{|\|f-f^*\|_{2,\calQ}^2 - \|f-f^*\|_n^2|}{\alpha + \|f-f^*\|_{2,\calQ}^2} > \eta \right] 
		\le \exp\left(-\tilde C{n\alpha \eta^2(1-\eta)}\right).
	\end{align*}
	By choosing $\eta=\frac 12$, the above inequality implies that with a high probability, $\|f-f^*\|_{2,\calQ}^2$ is bounded from above by constant multiples of $\|f-f^*\|_n^2+{c\log n\over n}$, which guarantees the required nonparametric error rate on $\|f-f^*\|_{2,\calQ}^2$.
	
	We conclude our proof by verifying the conditions with $\calF = \calG(L,d,1,N,M,B)$. The first condition can be verified as all $f\in \calG(L,d,1,N,M,B)$ are truncated at the level $M$ to ensure boundedness and we assume that the true function $f^*$ is bounded. According to \prettyref{asmp:model-and-hyperparameters}, note that $f^*$ and $f\in\mathcal F$ satisfy
	\[
	\|f^*\|_\infty,\|f\|_\infty\le M.
	\]
	
	Next, we verify the second condition.  we have
	\[
	|f(\bx)-f^*(\bx)|
	\le 2M,
	\quad 
	(f(\bx)-f^*(\bx))^4
	\le
	4M^2(f(\bx)-f^*(\bx))^2,
	\quad
	\bx\in \reals^d.
	\]
	Averaging over the features $\bx_1,\dots,\bx_n$ gives
	$
	\|(f-f^*)^2\|_n^2
	\le
	4M^2\|f-f^*\|_n^2.
	$
	Hence, we get
	\[
	\mathcal F_\delta
	\eqdef
	\left\{f\in \calF, \|(f-f^*)^2\|_n^2 \le 16\delta \right\}
	\subset
	\left\{
	f\in\mathcal F:
	\|f-f^*\|_n
	\le
	\frac{2\sqrt{\delta}}{M}
	\right\},
	\]
	and consequently
	\[
	\mathcal N
	\left(
	u,
	\mathcal F_\delta,
	\norm{\cdot}_n
	\right)
	\le
	\mathcal N
	\left(
	u,
	\mathcal F,
	\norm{\cdot}_n
	\right).
	\]
	Next, since
	$
	\|f-g\|_n
	\le
	\|f-g\|_\infty,
	$
	we have that every $L_\infty$-cover is also an empirical $L_2$-cover. Therefore,
	\begin{align}
		\label{eq:m19}
		\mathcal N_2
		(u,\mathcal F,\norm{\cdot}_n)
		\le
		\calN(u,\calF,\norm{\cdot}_\infty).
	\end{align}
	where $\calN(u,\calF,\norm{\cdot}_\infty)$ is the minimal $u$-cover of the set $\calF$ in the supremum norm. The number of parameters in any network of $\calG(L,d,1,N,M,B)$ is $A=N^2L+N(L+d+2)$ (see \eqref{eq:total-parameters}), for finite $d,L$ we have
	$$
	\log|\calN(u,\calF,\norm{\cdot}_\infty)|
	\stepa{\leq}
	C_0 A\log\pth{n} 
	\leq C_1 N^2 \log n
	\stepb{\leq} 
	C_3\log n(n/\log n)^{1\over 2\gamma^*+1},
	$$
	for constants $C_0,C_1,C_3>0$, where (a) follows from the covering number bound \eqref{eq:covering-bound} and (b) follows from \prettyref{asmp:model-and-hyperparameters}\ref{pt:hyperparam}. As we have $\gamma^*>\frac 12$, we get that there exists $\tilde c>0$ that satisfies $\log|\calN(u,\calF,\norm{\cdot}_\infty)|\leq n^{1/2-\tilde c}$. In view of \eqref{eq:m19}, the above implies
	\[
	\log
	\mathcal N_2
	\left(
	u,
	\mathcal F_\delta,
	\norm{\cdot}_n
	\right)
	\le
	n^{1/2-\tilde c},
	\]
	for all sufficiently large $n$. This completes our proof.

	\section{Proof of \prettyref{thm:speckle-lb}}
	
	\label{sec:proof-lower-bound}
	
	\subsection{Auxiliary results}
	We first present a few results that we use throughout our proof of the lower bound.
	The next two lemmas are two basic concentration results, one for simple quadratic functions of Gaussian random vectors, and the second one for the concentration of sum of subexponential random variables. 
	
	\begin{lemma}\label{lem:lM}\cite{laurentmassart} 
		Let $Z_1, Z_2, \ldots, Z_n$ be iid random variables distributed as $N(0,1)$ and $a \in \mathbb{R}^n_{\geq 0}$. Then for all $t\geq 0$,
		$$\mathbb{P}\left(\sum_i a_i Z^2_i- \sum_i a_i \geq 2\|a\|_2\sqrt{t} + 2\|a\|_\infty t \right) \leq e^{-t},
		\quad
		\mathbb{P}\left(\sum_i a_i Z^2_i- \sum_i a_i \leq -2\|a\|_2\sqrt{t} \right) \leq e^{-t}.$$
	\end{lemma}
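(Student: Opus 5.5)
The plan is the standard Cramér--Chernoff argument applied to the centered sum $S=\sum_i a_i(Z_i^2-1)$. I would bound its log-moment generating function separately on the upper and lower sides, then invert the resulting sub-gamma and sub-Gaussian bounds to get the two deviation inequalities.

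\emph{Step 1 (one-coordinate bounds).} Since $Z_i\sim N(0,1)$, we have $\EE[e^{\lambda Z_i^2}]=(1-2\lambda)^{-1/2}$ for $\lambda<1/2$. Hence, for $u\ge 0$ with $2ua_i<1$,
\[
\log \EE\big[e^{u a_i(Z_i^2-1)}\big]=-ua_i-\tfrac12\log(1-2ua_i)\le \frac{u^2a_i^2}{1-2ua_i}.
\]
Here I use the elementary inequality $-x-\tfrac12\log(1-2x)\le x^2/(1-2x)$ for $0\le x<1/2$. This follows by comparing the power series $\sum_{k\ge2}(2x)^k/(2k)$ with $x^2\sum_{k\ge0}(2x)^k$ term by term. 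Similarly, for $u\ge0$,
\[
\log\EE\big[e^{-ua_i(Z_i^2-1)}\big]=ua_i-\tfrac12\log(1+2ua_i)\le u^2a_i^2.
\]
This uses $x-\tfrac12\log(1+2x)\le x^2$ for $x\ge0$. Both sides vanish at $x=0$, and the derivative comparison is $2x/(1+2x)\le 2x$.

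\emph{Step 2 (summing by independence).} Independence of the $Z_i$, together with $a_i\le\|a\|_\infty$, gives
\[
\log\EE e^{uS}\le \frac{u^2\|a\|_2^2}{1-2u\|a\|_\infty}\quad (0\le u<1/(2\|a\|_\infty)),
\qquad
\log\EE e^{-uS}\le u^2\|a\|_2^2\quad (u\ge0).
\]

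\emph{Step 3 (Chernoff inversion).} For the lower tail, Markov's inequality gives $\PP(S\le -x)\le \exp(u^2\|a\|_2^2-ux)$. Choosing $u=x/(2\|a\|_2^2)$ yields $\exp(-x^2/(4\|a\|_2^2))$, and setting $x=2\|a\|_2\sqrt t$ gives the bound $e^{-t}$.

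For the upper tail, the bound in Step 2 has the sub-gamma form $\psi(u)\le \frac{vu^2}{2(1-cu)}$ with $v=2\|a\|_2^2$ and $c=2\|a\|_\infty$. The Legendre transform of $u\mapsto vu^2/(2(1-cu))$ is $h(x)=\frac{v}{c^2}\big(1+\frac{cx}{v}-\sqrt{1+\frac{2cx}{v}}\big)$. Its inverse is $h^{-1}(t)=\sqrt{2vt}+ct$, which one can check by substituting $x=\sqrt{2vt}+ct$ and simplifying the square root to $1+c\sqrt{2t/v}$. It follows that $\PP(S\ge \sqrt{2vt}+ct)\le e^{-t}$. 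With our $v$ and $c$, the threshold is $\sqrt{4\|a\|_2^2t}+2\|a\|_\infty t=2\|a\|_2\sqrt t+2\|a\|_\infty t$, as claimed.

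The only step requiring care is this last inversion. To keep it self-contained, I would plug in the explicit choice $u=\frac{1}{c}\big(1-(1+2cx/v)^{-1/2}\big)$ and verify directly that the exponent $ux-\frac{vu^2}{2(1-cu)}$ equals $t$ at $x=\sqrt{2vt}+ct$. The remaining steps are routine.
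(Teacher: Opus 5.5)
Your proof is correct and is the standard Laurent--Massart argument: bound the log-Laplace transform of $a_i(Z_i^2-1)$ by $u^2a_i^2/(1-2ua_i)$ on the upper side and by $u^2a_i^2$ on the lower side, then invert the resulting sub-gamma and sub-Gaussian bounds via Chernoff. The paper does not reprove the lemma and only cites that source, and your explicit choice of $u$ in the sub-gamma inversion does give exponent exactly $t$ at $x=\sqrt{2vt}+ct$.

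One small point: your choices of $u$ tacitly assume $a\neq 0$. The statement needs this as well, since for $a=0$ and $t>0$ both probabilities equal $1>e^{-t}$.
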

	
	\begin{lemma}\label{berstein's inequality for sub-exponential distribution}[Berstein's inequality for sub-exponential distributions. Theorem 2.9.1, \cite{vershynin2018high}]
		Let $Z_1,...,Z_n$ be independent mean zero, sub-exponential random variables. Then, for $t>0$, we have
		\begin{equation}
			\mathbb{P}\qth{\abs{\sum_{j=1}^nZ_j}\ge t}\le \exp\pth{-c\min\pth{\frac{t^2}{\sum_{j=1}^n \|Z_j\|_{\rm subexp}^2},\frac{t}{\max_{1
							\le j\le n}\|Z_j\|_{\rm subexp}}}}
		\end{equation}
		where $c>0$ is an absolute constant.
	\end{lemma}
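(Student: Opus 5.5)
The plan is the standard Chernoff (exponential moment) argument. First obtain a local bound on the moment generating function of each summand, then combine the summands by independence, and finally optimize over the exponential tilt. Write $K_j=\|Z_j\|_{\rm subexp}$ and $K_{\max}=\max_j K_j$. It suffices to treat the upper tail $\PP\qth{\sum_j Z_j\ge t}$: the lower tail follows by applying the same bound to $-Z_j$, which has the same sub-exponential norm.

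\textbf{Step 1 (local MGF bound).} We show there are absolute constants $C,c_0>0$ such that, for every mean-zero $Z$ with $\|Z\|_{\rm subexp}\le K$,
\begin{align*}
\EE\qth{e^{\lambda Z}}\le \exp\pth{C\lambda^2K^2}\quad\text{whenever } |\lambda|\le c_0/K .
\end{align*}
The route is through moments.
\begin{enumerate}
\item From $\EE\qth{e^{|Z|/K}}\le 2$ we get $\PP(|Z|\ge u)\le 2e^{-u/K}$.
\item Integrating this tail bound gives $\EE|Z|^p\le 2\,p!\,K^p$.
\item Expand $e^{\lambda Z}=1+\lambda Z+\sum_{p\ge2}\lambda^pZ^p/p!$. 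The linear term vanishes because $\EE Z=0$.
\item The remaining series is bounded by $2\sum_{p\ge2}(|\lambda|K)^p\le 4\lambda^2K^2$ once $|\lambda|K\le 1/2$.
\item Finally use $1+x\le e^x$.
\end{enumerate}
I expect this step to be the main (though mild) obstacle. The mean-zero assumption is used essentially here, and the restriction $|\lambda|\lesssim 1/K$ is exactly what later produces the linear, rather than quadratic, branch of the minimum.

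\textbf{Step 2 (Chernoff bound).} For $0\le\lambda\le c_0/K_{\max}$, Markov's inequality and independence give
\begin{align*}
\PP\qth{\sum_j Z_j\ge t}\le e^{-\lambda t}\prod_{j=1}^n\EE\qth{e^{\lambda Z_j}}\le \exp\pth{-\lambda t+C\lambda^2\sum_{j=1}^n K_j^2}.
\end{align*}

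\textbf{Step 3 (choice of $\lambda$).} Take $\lambda=\min\pth{t/(2C\sum_j K_j^2),\,c_0/K_{\max}}$ and consider the two cases.
\begin{itemize}
\item If the first term is the minimum, the exponent equals $-t^2/(4C\sum_j K_j^2)$.
\item If the second term is the minimum, then $C\lambda^2\sum_jK_j^2\le \lambda t/2$, so the exponent is at most $-\lambda t/2=-c_0t/(2K_{\max})$.
\end{itemize}
In both cases the exponent is at most $-c\min\pth{t^2/\sum_j K_j^2,\;t/K_{\max}}$ for a suitable absolute constant $c>0$.

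Combining the upper and lower tails by a union bound yields the two-sided bound with a prefactor $2$. This prefactor is how the inequality appears in \cite[Theorem 2.9.1]{vershynin2018high}, and the version displayed in the lemma should be read with that convention. The factor $2$ cannot be removed merely by shrinking $c$ when the minimum is small, so I would either keep it explicitly or note that it is harmless in every application in this paper, since we always use $t$ large enough that the minimum exceeds $\log 2$ by a constant margin.
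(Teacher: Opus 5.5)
Your proof is correct and is the standard Chernoff argument behind the cited result: the local MGF bound $\EE e^{\lambda Z}\le e^{C\lambda^2K^2}$ for $|\lambda|\lesssim 1/K$, followed by the two-case choice of $\lambda$. The paper gives no proof of its own, only the citation to \cite{vershynin2018high}.

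You are also right that the displayed two-sided bound is false without the prefactor $2$, which is how the cited theorem states it. A single Rademacher variable with $t=1$ has left side $1$ but right side $<1$ for every $c>0$. In addition, your use of $\EE e^{|Z|/K}\le 2$ and $\|{-Z}\|_{\rm subexp}=\|Z\|_{\rm subexp}$ correctly assumes the standard absolute-value definition of the sub-exponential norm, which the paper's notation section omits.
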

	
	The next lemma is a version of the well-known Berry-Esseen theorem. 
	
	\begin{lemma}\label{lem:BE}(non-iid Berry-Esseen theorem \cite{Ber-Ess})
		Let $Z_1, Z_2, \ldots, Z_n$ be independent, zero mean random variables with finite second moment. Suppose that there exists a fixed number $\kappa$ such that  
		$$\max_{1\leq i \leq n}\frac{\mathbb{E}|Z_i|^3}{\mathbb{E}(Z_i)^2} \leq \kappa.$$
		If $F, \Phi$ denote the CDFs of $\frac{\sum_i Z_{i}}{\sqrt{\sum_i \mathbb{E} Z_{i}^{2}}}$ and $N(0,1)$ respectively, then there exists a constant $C_0$, such that
		$$\sup _{x}\left|F(x)-\Phi(x)\right|\leq C_0 \frac{\kappa}{\sqrt{\sum_i \mathbb{E} Z_{i}^{2}}}.$$
	\end{lemma}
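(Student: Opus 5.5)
The plan is to derive the stated form from the classical Berry--Esseen bound for independent, non-identically distributed summands, in its Lyapunov-ratio form. Write $s_n^2=\sum_{i}\mathbb{E}Z_i^2$ and $\rho_i=\mathbb{E}|Z_i|^3$. The classical statement (Berry, Esseen) gives an absolute constant $C_0$ with
\begin{align*}
\sup_x\left|F(x)-\Phi(x)\right|\le C_0\,\frac{\sum_{i=1}^n\rho_i}{s_n^3}.
\end{align*}
Granting this, the lemma follows in one line. The hypothesis $\max_i \rho_i/\mathbb{E}Z_i^2\le\kappa$ gives $\sum_i\rho_i\le\kappa\sum_i\mathbb{E}Z_i^2=\kappa s_n^2$. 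Hence the right-hand side is at most $C_0\kappa s_n^2/s_n^3=C_0\kappa/s_n$, which is exactly the claimed bound. Summands with $\mathbb{E}Z_i^2=0$ are almost surely zero and can be dropped, so the ratio condition is well defined.

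If a self-contained proof of the Lyapunov-form bound is wanted, I would follow Esseen's smoothing argument. First, normalize $S=\sum_i Z_i/s_n$ and let $\varphi(t)=\prod_i\varphi_i(t/s_n)$ be its characteristic function. Second, apply Esseen's smoothing inequality: for every $T>0$,
\begin{align*}
\sup_x|F(x)-\Phi(x)|\le\frac1\pi\int_{-T}^{T}\frac{|\varphi(t)-e^{-t^2/2}|}{|t|}\,dt+\frac{24}{\pi\sqrt{2\pi}\,T}.
\end{align*}
Third, set $L_n=\sum_i\rho_i/s_n^3$ and choose $T=c/L_n$. Fourth, bound $|\varphi(t)-e^{-t^2/2}|\le C L_n|t|^3e^{-t^2/4}$ for $|t|\le c/L_n$. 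This comes from the Taylor expansion $|\varphi_i(u)-1+\tfrac12\mathbb{E}Z_i^2u^2|\le\tfrac16\rho_i|u|^3$, combined with the elementary inequality $|\prod a_i-\prod b_i|\le\sum|a_i-b_i|$ for complex numbers of modulus at most one. Integrating against $1/|t|$ then yields a bound of order $L_n$, and the second term of the smoothing inequality is also of order $L_n$.

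The main obstacle is the characteristic-function estimate on the full range $|t|\le c/L_n$, not just for small $|t|$. The straightforward comparison $|\varphi_i(u)-e^{-\mathbb{E}Z_i^2u^2/2}|$ only works when each $\mathbb{E}Z_i^2u^2/s_n^2$ is small. To cover the rest of the range I would first bound $|\varphi(t)|^2$ using the symmetrized variables $Z_i-Z_i'$. This gives $|\varphi(t)|\le\exp(-t^2/2+\tfrac23L_n|t|^3)\le e^{-t^2/6}$ for $|t|\le 1/(4L_n)$. That estimate handles the region $|t|\ge(1/L_n)^{1/3}$, and the Taylor comparison handles smaller $|t|$.

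Since the paper treats this lemma as a cited tool, in practice I would present only the reduction in the first paragraph and cite \cite{Ber-Ess} for the Lyapunov-form inequality.
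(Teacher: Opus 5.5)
Your reduction is correct and matches the paper's treatment, which gives no proof and simply cites the classical Berry--Esseen theorem; the stated form follows from the Lyapunov-ratio version exactly via your observation $\sum_i\mathbb{E}|Z_i|^3\le\kappa\sum_i\mathbb{E}Z_i^2$. One caveat on your optional self-contained sketch: the unweighted inequality $|\prod_i a_i-\prod_i b_i|\le\sum_i|a_i-b_i|$ gives only $|\varphi(t)-e^{-t^2/2}|\le CL_n|t|^3$ with no Gaussian factor, and integrating this against $1/|t|$ over $|t|\le L_n^{-1/3}$ yields $O(1)$ rather than $O(L_n)$; on that range you need to expand $\log\varphi$, or use the weighted telescoping bound $\sum_k|a_k-b_k|\prod_{j<k}|a_j|\prod_{j>k}|b_j|$, to keep the factor $e^{-ct^2}$.
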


	Our last lemma will be used in the proofs of the lower bounds.

	\begin{lemma}\label{lem:GV}(Gilbert-Varshamov bound \cite{Gilbert},\cite{Varshamov}, \cite{Mainbook}) 
		Let $\Omega=\{0,1\}^m$, where $m \geq 8$. Then, there exists a hypercube  $\Omega^{\prime}:=\left\{\omega^1, \ldots, \omega^M\right\} \subseteq \Omega$ such that
		$M \geq 2^{m / 8}$, each $\omega_i$ has at least $m/16$ nonzero entries, and we have
		$\rho\left(\omega^j, \omega^k\right) \geq m / 16$ for each $j \neq k$.
	\end{lemma}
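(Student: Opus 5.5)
The plan is a greedy packing (Gilbert--Varshamov) argument carried out directly inside the set of sufficiently heavy binary vectors. Working directly in that set gives the weight condition and the distance condition at the same time, with no separate pruning step.

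Put $r=\lceil m/16\rceil-1<m/16$. Let $V=\sum_{j=0}^{r}\binom{m}{j}$ be the number of points in a closed Hamming ball of radius $r$. Let $\Omega_+=\{\omega\in\Omega:\ \sum_i\omega_i\ge m/16\}$. A vector lies outside $\Omega_+$ exactly when its weight is at most $r$, so $\Omega\setminus\Omega_+$ is the ball of radius $r$ around $0$. Hence $|\Omega_+|\ge 2^m-V$.

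\textbf{Greedy step.} Pick any $\omega^1\in\Omega_+$ and delete from $\Omega_+$ every point within Hamming distance $r$ of it. Pick $\omega^2$ from what remains, delete its radius-$r$ ball, and continue until $\Omega_+$ is exhausted. Every selected point has weight at least $m/16$. Any two selected points satisfy $\rho(\omega^j,\omega^k)\ge r+1\ge m/16$. Each step removes at most $V$ points, so the number $M$ of selected points satisfies
\[
M\ \ge\ \frac{|\Omega_+|}{V}\ \ge\ \frac{2^m}{V}-1 .
\]

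\textbf{Volume estimate.} Next I bound $V$ with the standard entropy estimate $\sum_{j\le \alpha m}\binom{m}{j}\le 2^{mH(\alpha)}$ for $\alpha\le 1/2$. Here $H$ is the binary entropy function. The estimate follows from $1=(\alpha+(1-\alpha))^m\ge\sum_{j\le\alpha m}\binom mj\alpha^j(1-\alpha)^{m-j}\ge \sum_{j\le\alpha m}\binom mj\,\alpha^{\alpha m}(1-\alpha)^{(1-\alpha)m}$. With $\alpha=1/16$ we have $H(1/16)<0.34$, so $V\le 2^{0.34m}$. Therefore
\[
M\ \ge\ 2^{0.66m}-1 .
\]

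\textbf{Comparison with $2^{m/8}$.} The last step is elementary. For $m\ge 8$ we need $2^{0.66m}-1\ge 2^{m/8}$. This holds because $2^{0.66m}-2^{m/8}=2^{m/8}\bigl(2^{0.535m}-1\bigr)\ge 2^{m/8}\bigl(2^{4.28}-1\bigr)\ge 2^{m/8}\ge 1$.

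I expect the only delicate point to be keeping the constants consistent. The radius must be strictly below $m/16$ so that the separation after deletion is at least $m/16$. At the same time, the volume bound must give an exponent comfortably above $1/8$, which is why I use $H(1/16)$ and not a cruder estimate. If $m/16$ is not an integer, $r=\lceil m/16\rceil-1\le m/16$ still satisfies the entropy bound, since only $r\le m/16\le m/2$ is needed.
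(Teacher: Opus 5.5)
Your proof is correct. The greedy packing inside $\Omega_+$ gives weight and separation at least $m/16$ simultaneously, the entropy estimate gives $V\le 2^{mH(1/16)}\le 2^{0.34m}$, and $2^{0.66m}-1\ge 2^{m/8}$ does hold for $m\ge 8$. The paper gives no proof of its own and only cites the classical Varshamov--Gilbert bound; your argument is essentially the standard greedy proof of that bound (the textbook version puts the zero word into the code first, which plays the role of your restriction to $\Omega_+$, and, to my recollection, bounds the ball volume with a Hoeffding-type binomial tail bound rather than the entropy estimate).
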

	
	\subsection{Proof details}

	Let $\beta^*, d^*$ be as in \eqref{eq:gamma*} and $\gamma^*=\beta^*/d^*$. When $d^* \le |\calJ|$ we have the class of $(\beta^*,C)$ functions is a subset of $\mathcal{H}(|\calJ|,l,\mathcal{P})$.
	Hence, as $f^*(\bx)=f^*(\bx_{\calJ})$ is a $d^*$ dimensional function, we get for any $a>0$
	\begin{align*}
		&\inf_{\hat f} \sup_{f^* \in \mathcal{H}(|\mathcal{J}|,l,\mathcal{P})} \PP\qth{\|\hat f-f^*\|_n^2 \ge a}
		\ge \inf_{\hat f} \sup_{f^*:\reals^{d^*}\mapsto \reals^+,f^*\text{ is $(\beta^*,C)$ smooth}} \PP\qth{\|\hat f-f^*\|_n^2 \ge
			a},
	\end{align*}
	In view of the above, it is sufficient to establish
	\begin{align*}
		\inf_{\hat f} \sup_{f^*:\reals^{d^*}\mapsto \reals^+,f^*\text{ is $(\beta^*,C)$ smooth}} \PP\qth{\|\hat f-f^*\|_n^2 \ge c_1 n^{-\frac{2\gamma^*}{2\gamma^*+1}}}>p^*.
	\end{align*} 
	In the remaining part, our strategy extends the technique for proving \cite[Theorem 2.1]{malekian2025speckle}, which treated the special case of $d^*=1$, to the general dimension $d^*$. 
	First we set
	
	\begin{align}\label{eq:Mn}
		M_n = \left\lfloor (C^2 n)^{\frac{1}{2\beta^*+d^*}} \right\rfloor,
		\quad
		\delta_n=M_n^{-1}.
	\end{align}
	
	Partition $[0,1]^{d^*}$ by $M_n^{d^*}$ cubes $\{A_{n,j}\}$ of side length $1/M_n$ and with centers $\{\ba_{n,j}\}$.
	Choose a function $\bar g : \mathbb{R}^{d^*} \to \mathbb{R}$ such that the support of $\bar g$ is a subset of
	$[-\tfrac12,\tfrac12]^{d^*}$, $\int \bar g^2(\bx)\,d\bx>0$, and $\bar g$ is a $d^*$-dimensional $(\beta^*,2^{s-1})$ smooth function, where $\beta^*=r+s$, with $r\geq 0$ an integer and $s\in (0,1)$ a proper fraction. Define
	\[
	g(\bx)=C\,\bar g(\bx).
	\]
	Then we have the following.
	\begin{enumerate}
		\item the support of $g$ is a subset of $[-\tfrac12,\tfrac12]^{d^*}$;
		\item $\int g^2(\bx)\,d\bx=C^2\int \bar g^2(\bx)\,d\bx$ and $\int \bar g^2(\bx)\,d\bx>0$;
		\item $g$ is a $(\beta^*,C2^{s-1})$ smooth function.
	\end{enumerate}
	We specify the data generating regression function class based on the above construction. Define the regression function $\nu^{(\bomega_l)}(x)$, indexed by the vector $\bomega_l=(\omega_{l,1},\dots,\omega_{l,M_n^{d^*}})\in \{0,1\}^{M_n^{d^*}}$, as 
	$$
	\nu^{(\bomega_l)}(\bx)=1-\sum_{j=1}^{M_n^{d^*}} \omega_{l,j} g_{n,j}(\bx),
	\quad 
	g_{n,j}(\bx)=M_n^{-\beta^*} g(M_n(\bx-\ba_{n,j})).
	$$
	We first check that $\nu^{(\bomega_l)}$ is $(\beta^*,C)$-smooth. To show the above, set $\alpha=(\alpha_1,\dots,\alpha_{d^*})$, $\alpha_i\in\mathbb{N}_0$ with $\sum_{j=1}^{d^*} \alpha_j=r$, where $\beta^*=r+s$ as defined in \prettyref{def:beta-C-smooth}, and
	\[
	D^\alpha = \frac{\partial^r}{\partial x_1^{\alpha_1}\cdots \partial x_{d^*}^{\alpha_{d^*}}}.
	\]
	If $\bx,\bz\in A_{n,i}$ then we have
	\begin{align}
		&~|D^\alpha \nu^{(\bomega_l)}(\bx)-D^\alpha \nu^{(\bomega_l)}(\bz)|
		=|\omega_{l,i}|\cdot|D^\alpha g_{n,i}(\bx)-D^\alpha g_{n,i}(\bz)|
		\nonumber\\
		&\le C2^{s-1} M_n^{-s} M_n^r
		\|M_n(\bx-\ba_{n,i})-M_n(\bz-\ba_{n,i})\|^s
		\le C2^{s-1}\|\bx-\bz\|^s
		\le C\|\bx-\bz\|^s .
	\end{align}
	
	Now assume that $\bx\in A_{n,i}$ and $\bz\in A_{n,j}$ for $i\neq j$.
	Choose $\bar \bx,\bar \bz$ on the line between $\bx$ and $\bz$ such that
	$\bar \bx$ is on the boundary of $A_{n,i}$ and $\bar \bz$ on the boundary of $A_{n,j}$.
	Then we have
	\begin{align}
		&~|D^\alpha \nu^{(\bomega_l)}(\bx)-D^\alpha \nu^{(\bomega_l)}(\bz)|
		\le |\omega_{l,i}||D^\alpha g_{n,i}(\bx)-D^\alpha g_{n,i}(\bar \bx)|
		+|\omega_{l,j}||D^\alpha g_{n,j}(\bz)-D^\alpha g_{n,j}(\bar \bz)|
		\nonumber\\
		&
		\le C2^{s-1}(\|\bx-\bar \bx\|^s+\|\bz-\bar \bz\|^s)
		\stepa{\leq} C2^s\left(\frac{\|\bx-\bar \bx\|}{2}+\frac{\|\bz-\bar \bz\|}{2}\right)^s
		\le C\|\bx-\bz\|^s
	\end{align}
	where (a) follows by Jensen's inequality. To this end we use the next lemma which is a restatement of Theorem 2.5.3 in \cite{korostelev2012minimax}, clarifies what properties we expect the functions to satisfy, and what lower bound we can obtain for $\max_{0\leq j\leq M}\mathbb{P}_{\nu_j}(\frac 1n\sum_{i=1}^n(\hat\nu(\bx_i)-\nu_j(\bx_i))^2\geq t)$. Define the likelihood ratio $\Lambda(\nu', \nu'')$ under two different data generating functions $\nu', \nu''$ as
	\begin{align}\label{eq:likelihood-ratio}
		\Lambda\left(\nu^{\prime}, \nu^{\prime \prime}\right):= \Lambda\left(\nu^{\prime}, \nu^{\prime \prime};y_1, \ldots, y_n\right)= \mathbb{P}_{\nu^{\prime}}(y_1, \ldots, y_n)/\mathbb{P}_{\nu^{\prime \prime}}(y_1, \ldots, y_n),
	\end{align}
	where $\mathbb{P}_{\nu}(y_1, \cdots, y_n)$ denotes the probability density function of $y_1, y_2, \cdots, y_n$ under the data generating functions $\nu$.

	\begin{lemma}\label{lem:lowerbound-korostelev}
		Suppose $\nu_0 , \nu_1, \ldots, \nu_\tM$ are real-valued functions such that the following conditions hold:
		
		(i) For all $0 \leq j \neq k \leq \tM$, $\dist(\nu_j,\nu_k) \geq 2 s_n >0$, where $\dist$ is a distance on the class of $(\beta^*,C)$-smooth functions. 
		
		(ii) For all $j \in \{1,2,\ldots, \tM\}$, there exists $\lambda < 1$ such that $\lambda_j < \lambda$ and we have $$\mathbb{P}_{\nu_j}(\Lambda(\nu_0, \nu_j) > \tM^{-\lambda_j}) > p^* ,$$ where $p^*$ is independent of $n$, $j$, and $\Lambda (\nu_0, \nu_j)$ is defined in \eqref{eq:likelihood-ratio}. Then, for every estimator $\hat{\nu}_n$, we have 
		$$\max_{0\leq j\leq \tM}\mathbb{P}_{\nu_j}(\dist(\hat{\nu}_n, \nu_j)\geq s_n)\geq p^*/2.$$
	\end{lemma}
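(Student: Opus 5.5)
The plan is a direct contradiction argument based on a change of measure from $\mathbb{P}_{\nu_j}$ to $\mathbb{P}_{\nu_0}$, in the spirit of Fano/Assouad-type reductions. I will assume $\tM\ge 2$, which is the intended regime since $\tM$ will grow with $n$ through \prettyref{lem:GV}. I will also use the following fact: all the laws $\mathbb{P}_\nu$ of $(y_1,\dots,y_n)$ under model \eqref{eq:model} are Gaussian with strictly positive densities, so they are mutually absolutely continuous. Hence, for any event $D$,
\[
\mathbb{P}_{\nu_0}(D)=\mathbb{E}_{\nu_j}\big[\Lambda(\nu_0,\nu_j)\mathbbm{1}_D\big].
\]

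Fix an estimator $\hat\nu_n$. Suppose, for contradiction, that
\[
\mathbb{P}_{\nu_j}\big(\dist(\hat\nu_n,\nu_j)\ge s_n\big)<p^*/2 \quad\text{for every } 0\le j\le \tM.
\]
Define the events $D_j=\{\dist(\hat\nu_n,\nu_j)<s_n\}$.

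\textbf{Step 1 (disjointness).} By condition (i) and the triangle inequality for $\dist$, the events $D_0,\dots,D_{\tM}$ are pairwise disjoint. If $\hat\nu_n$ lay in both $D_j$ and $D_k$, we would get $\dist(\nu_j,\nu_k)<2s_n$, which contradicts (i).

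\textbf{Step 2 (change of measure).} For each $j\ge1$, condition (ii) gives $\lambda_j<\lambda$. Since $\tM\ge 2$, this yields $\tM^{-\lambda_j}\ge \tM^{-\lambda}$. Combining this with the identity above,
\[
\mathbb{P}_{\nu_0}(D_j)\ \ge\ \mathbb{E}_{\nu_j}\big[\Lambda(\nu_0,\nu_j)\mathbbm{1}\{D_j\cap\{\Lambda(\nu_0,\nu_j)>\tM^{-\lambda_j}\}\}\big]\ \ge\ \tM^{-\lambda}\,\mathbb{P}_{\nu_j}\big(D_j\cap\{\Lambda(\nu_0,\nu_j)>\tM^{-\lambda_j}\}\big).
\]
By a union bound, the last probability is at least
\[
\mathbb{P}_{\nu_j}\big(\Lambda(\nu_0,\nu_j)>\tM^{-\lambda_j}\big)-\mathbb{P}_{\nu_j}(D_j^c)\ >\ p^*-p^*/2\ =\ p^*/2,
\]
where the first term is controlled by (ii) and the second by the contradiction hypothesis.

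\textbf{Step 3 (summation).} Using the disjointness from Step 1 and summing the bound of Step 2 over $j=1,\dots,\tM$, we obtain
\[
1\ \ge\ \mathbb{P}_{\nu_0}(D_0)+\sum_{j=1}^{\tM}\mathbb{P}_{\nu_0}(D_j)\ >\ \big(1-p^*/2\big)+\tM^{1-\lambda}\,p^*/2 .
\]
Because $\lambda<1$ and $\tM\ge2$, we have $\tM^{1-\lambda}>1$. The right-hand side therefore exceeds $1$, which is a contradiction. Hence some $j$ satisfies $\mathbb{P}_{\nu_j}(\dist(\hat\nu_n,\nu_j)\ge s_n)\ge p^*/2$, which is exactly the claim of the lemma.

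The argument is elementary, so the main point needing care is Step 2. There the threshold $\tM^{-\lambda_j}$, which depends on $j$, has to be replaced uniformly by $\tM^{-\lambda}$; this is precisely where $\lambda_j<\lambda<1$ and $\tM\ge 2$ are used. The mutual absolute continuity of the Gaussian laws is what justifies writing $\mathbb{P}_{\nu_0}(D)$ as an expectation of the likelihood ratio under $\mathbb{P}_{\nu_j}$. The degenerate case $\tM=1$ is not needed for the application and can be excluded.
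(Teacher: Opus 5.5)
Your proof is correct. The paper gives no argument of its own for this lemma; it only cites Theorem 2.5.3 of \cite{korostelev2012minimax}, and your disjointness plus change-of-measure argument is the standard proof of results of this type. Two of your assumptions can be dropped. The restriction $\tM\ge 2$ is unnecessary, because the strict inequalities already give a contradiction when $\tM=1$. Mutual absolute continuity is also unnecessary, because only $\PP_{\nu_0}(D)\ge \EE_{\nu_j}[\Lambda(\nu_0,\nu_j)\mathbbm{1}_D]$ is used, and this holds for any pair of densities.

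Keep in mind that Step 1 relies on the triangle inequality, which the hypothesis that $\dist$ is a distance supplies. The paper, however, later applies the lemma with $\dist(\nu,\mu)=\frac1n\sum_{i=1}^n(\nu(\bx_i)-\mu(\bx_i))^2$, which is the square of a pseudometric. For that choice, $D_j\cap D_k\neq\emptyset$ only forces $\dist(\nu_j,\nu_k)<4s_n$, so disjointness fails under separation $2s_n$. Running your argument with $\sqrt{\dist}$ instead gives the conclusion with $s_n/2$ in place of $s_n$, a constant-factor loss that does not affect the rate.
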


	We will use the above result with the distance functional $\dist$ given by the $\norm{\cdot}_{n}^2$ metric
	\begin{align}
		\dist(\nu,\mu)
		=\frac 1n\sum_{i=1}^n(\nu(\bx_i)-\mu(\bx_i))^2.
	\end{align}
	
	For a simplicity of notations, define $m=M_n^{d^*}$.  Now define $\nu_0\equiv 1$, choose the number of hypothesis $\tM$ as $2^{\frac m8}$,
	and let $\nu_1,\ldots,\nu_\tM$ be elements of
	$\{\nu^{(\bomega)}:\bomega\in\{0,1\}^m\}$ with
	$\nu_l=\nu^{\bw_l}$ for
	$\{\bomega_1,\dots,\bomega_\tM\}\subseteq \{0,1\}^{m}$. Then, \prettyref{lem:GV} guarantees that $\bomega_1,\dots,\bomega_\tM$ can be chosen such that
	$\rho(\bomega_j,\bomega_k)\geq m/16$ for all $j\neq k$ and each
	$\bomega_l$ has at least $m/16$ nonzero entries.
	
	\textbf{Separation of the hypotheses.} Consider two functions $\nu_j,\nu_k$ with $j\neq k$. We have the representation
	\begin{align}
		\nu_j(\bx) = \nu^{(\bomega_j)} = 1-\sum_{i=1}^{m}\omega_{j,i}g_{n,i}(\bx),
		\quad
		\nu_k(\bx) = \nu^{(\bomega_k)} = 1-\sum_{i=1}^{m}\omega_{k,i}g_{n,i}(\bx).
	\end{align}
	As the functions $\sth{g_{n,i}}_{i=1}^m$ have disjoint supports and the design points
	$\bx_1,\dots,\bx_n$ are fixed, we get
	\begin{align}\label{eq:emp:sep}
		\dist(\nu_j,\nu_k)
		&= \frac{1}{n}\sum_{i=1}^n\!\bigl(\nu_j(\bx_i)-\nu_k(\bx_i)\bigr)^2
		\nonumber\\
		&=
		\frac 1n \sum_{i=1}^n
		\pth{\sum_{l=1}^m(\omega_{k,l}-\omega_{j,l})g_{n,l}(\bx_i)}^2
		\geq 
		\sum_{\{l\,:\,\bomega_{j,l}\neq \bomega_{k,l}\}}
		\frac{1}{n}\sum_{i=1}^n g_{n,l}^2(\bx_i).
	\end{align}
	Recall $\delta_n,M_n$ as defined in \eqref{eq:Mn}. For each fixed $j$, the Riemann sum approximation gives
	\[
	\frac{1}{n}\sum_{i=1}^n g_{n,l}^2(\bx_i)
	= \delta_n^{2\beta^*}\,\frac{1}{n}\sum_{i=1}^n
	g^2\!\!\left(\tfrac{\bx_i-\ba_{n,l}}{\delta_n}\right)
	= \delta_n^{2\beta^*+d^*}\bigl(\|g\|_2^2+o(1)\bigr),
	\]
	where the $o(1)$ term is the Riemann sum error, which vanishes by \prettyref{asmp:riemann-approx-revised} as
	$n\delta_n^{d^*}\to\infty$.  Since $\rho(\bomega_j,\bomega_k)\geq m/16$, at
	least $m/16$ terms contribute to the sum over $l$ in \eqref{eq:emp:sep}. This implies
	\begin{equation}\label{eq:emp:sep:final}
		\dist(\nu_j,\nu_k)
		\;\geq\; \frac{m}{16}\,\delta_n^{2\beta^*+d^*}\bigl(\|g\|_2^2+o(1)\bigr)
		\;=\; \frac{\delta_n^{2\beta^*}\|g\|_2^2}{16}\bigl(1+o(1)\bigr)
		\;=:\; 2s_n,
	\end{equation}
	where we used $m\delta_n^{d^*}=1$.
	
	\textbf{Likelihood ratio.}
	Under the fixed design model \eqref{eq:model}, we have
	$y_i\sim N(0,\nu^2(\bx_i)+\sigma_\tau^2)$ independently.
	Writing $\prod_i=\prod_{i=1}^n$ and $\sum_i=\sum_{i=1}^n$,
	\[
	\begin{aligned}
		\Lambda(\nu_0,\nu_l)
		&=\prod_i\sqrt{\frac{\sigma_\tau^2+\nu_l^2(\bx_i)}{1+\sigma_\tau^2}}
		\exp\!\left[\sum_i\frac{y_i^2}{2(\sigma_\tau^2+\nu_l^2(\bx_i))}
		-\sum_i\frac{y_i^2}{2(1+\sigma_\tau^2)}\right]\\
		&=\exp\!\left[\sum_i\frac{y_i^2}{2(1+\sigma_\tau^2)}
		\left(\frac{\sigma_\tau^2+1}{\sigma_\tau^2+\nu_l^2(\bx_i)}-1\right)
		+\frac{1}{2}\sum_i\log\frac{\sigma_\tau^2+\nu_l^2(\bx_i)}{\sigma_\tau^2+1}
		\right].
	\end{aligned}
	\]
	
	To establish that the above likelihood ratio satisfies the requirements of \prettyref{lem:lowerbound-korostelev}, we use the following result.

	\begin{lemma}\label{lem:likelihoodratio}
		There exists $\lambda<1$ such that for all $l=1,\dots,\tM$,
		$\mathbb{P}_{\nu_l}(\Lambda(\nu_0,\nu_l)>\tM^{-\lambda_l})>p^*$,
		where $\lambda_l<\lambda$.
	\end{lemma}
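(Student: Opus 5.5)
The plan is to write $\log\Lambda(\nu_0,\nu_l)$ under $\PP_{\nu_l}$ as a weighted sum of centered chi-squares plus a deterministic drift. I will then show that the drift is at most a small constant times $m=M_n^{d^*}$ and the fluctuation is $O_P(\sqrt m)$. Since $\log\tM=(m/8)\log 2$, this gives $\log\Lambda>-\lambda_l\log\tM$ with high probability.

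\textbf{Step 1 (representation).} Under $\PP_{\nu_l}$ write $y_i=\sqrt{\sigma_\tau^2+\nu_l^2(\bx_i)}\,Z_i$ with $Z_i\simiid N(0,1)$, and set $r_i=(\sigma_\tau^2+\nu_l^2(\bx_i))/(\sigma_\tau^2+1)$. The displayed formula for $\Lambda$ then becomes
\begin{align*}
\log\Lambda(\nu_0,\nu_l)=\frac12\sum_i(1-r_i)(Z_i^2-1)+\frac12\sum_i\bigl(1-r_i+\log r_i\bigr).
\end{align*}
Since $1-r_i=(1-\nu_l^2(\bx_i))/(1+\sigma_\tau^2)$ and $\nu_l=1-\sum_j\omega_{l,j}g_{n,j}$ with disjoint supports, we get $|1-r_i|\le C'\sum_j\omega_{l,j}|g_{n,j}(\bx_i)|\le C'\|g\|_\infty M_n^{-\beta^*}\to0$. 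In particular $r_i\in[1/2,3/2]$ for $n$ large.

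\textbf{Step 2 (drift).} Using $\log r\ge (r-1)-(r-1)^2$ for $r\in[1/2,3/2]$, the drift is at least $-\frac12\sum_i(1-r_i)^2$. Next I bound $(1-\nu_l^2)^2\le 4\sum_j\omega_{l,j}g_{n,j}^2$ (for $n$ large) and apply the Riemann-sum computation already used for the separation bound, which relies on \prettyref{asmp:riemann-approx-revised}. This gives
\[
\sum_i(1-r_i)^2\le \frac{4}{(1+\sigma_\tau^2)^2}\sum_{j\le m}n\,\delta_n^{2\beta^*+d^*}\bigl(\|g\|_2^2+o(1)\bigr)\le 5\,\frac{m\,C^2\|\bar g\|_2^2}{C^2}=5m\|\bar g\|_2^2,
\]
where I used $n\delta_n^{2\beta^*+d^*}\approx C^{-2}$ from the definition \eqref{eq:Mn} of $M_n$. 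The freedom in choosing $\bar g$ is the key lever here. Replacing $\bar g$ by $\eta\bar g$ with small $\eta$ preserves support and smoothness with constant at most $2^{s-1}$, and makes $\|\bar g\|_2^2$ as small as desired. I choose $\eta$ so that $\frac52\|\bar g\|_2^2\le \frac{\log 2}{32}$, so that the drift is at least $-\frac14\log\tM$. This only changes the constant $c_1$ in the separation $s_n$.

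\textbf{Step 3 (fluctuation) and conclusion.} The centered term $W=\frac12\sum_i(1-r_i)(Z_i^2-1)$ has variance $\frac12\sum_i(1-r_i)^2=O(m)$. By Chebyshev, or by Bernstein's inequality in \prettyref{berstein's inequality for sub-exponential distribution} since the weights are bounded, $\PP(W<-\frac14\log\tM)\le O(m)/(c\,m^2)\to0$. Hence $\PP_{\nu_l}\bigl(\log\Lambda>-\frac12\log\tM\bigr)\to1$ uniformly in $l$, so the lemma holds with $\lambda_l=\frac12<\lambda=\frac34$ and any $p^*<1$ for $n\ge n_0$.

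\textbf{Main obstacle.} The delicate point is Step 2: the drift must be a \emph{small} constant multiple of $m$, not merely $O(m)$. This requires tracking the constant through the Riemann approximation and the choice of $C$ in $M_n$, and the fix is the rescaling of $\bar g$. The finitely many $n<n_0$ can be absorbed into the constants $c_1,p^*$ of \prettyref{thm:speckle-lb}, or handled by taking $M_n\ge$ a fixed constant.
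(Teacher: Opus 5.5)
Your proof is correct, and it takes a genuinely different, more elementary route than the paper.

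\textbf{The paper's route.} It Taylor-expands $1/(1-s_{l,i})$ and $\log(1-s_{l,i})$; its $s_{l,i}$ is your $1-r_i$. It then controls the second-order random term with the Laurent--Massart bound and bounds $\sum_i s_{l,i}^r$ for $r=2,3,4$ by Riemann sums. Finally it applies the non-i.i.d.\ Berry--Esseen theorem to the centered main term and ends with a Gaussian tail computation giving $p^*=1/6$.

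\textbf{Your route.} The standardization $y_i=\sqrt{\sigma_\tau^2+\nu_l^2(\bx_i)}\,Z_i$ makes the decomposition of $\log\Lambda(\nu_0,\nu_l)$ exact. So there are no mean-value remainders, and the drift is deterministic and needs only a one-sided bound on $\sum_i(1-r_i)^2$. Chebyshev then replaces the CLT. This suffices because the fluctuations are on the scale $\sqrt m$ while $\log\tM\asymp m$, and you even get probability tending to one. The paper's own final threshold is of order $-\sqrt m$, so its Berry--Esseen step is more than is needed. The paper's finer distributional approximation would only be essential if the threshold sat on the fluctuation scale.

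\textbf{Common ingredient.} Both proofs use the same lever: shrink the bump, at the cost of the constant $c_1$. Your bookkeeping of $n\delta_n^{2\beta^*+d^*}\to C^{-2}$ against $\|g\|_2^2=C^2\|\bar g\|_2^2$ is cleaner than the paper's normalization of $n\delta_n^{2\beta^*+d^*}$.

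\textbf{One improvement.} Step 2 need not use the Riemann-sum approximation, and hence not Assumption~\ref{asmp:riemann-approx-revised}. By disjoint supports, $|1-r_i|\le(2+o(1))\|g\|_\infty\delta_n^{\beta^*}$, and $n\delta_n^{2\beta^*}=(C^{-2}+o(1))\,m$. Hence $\sum_i(1-r_i)^2\le(4+o(1))\|\bar g\|_\infty^2\,m$ for any design. Shrinking $\|\bar g\|_\infty$ then controls both the drift and the Chebyshev variance.

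This is worth doing because the borrowed identity $\frac1n\sum_i g_{n,j}^2(\bx_i)=\delta_n^{2\beta^*+d^*}(\|g\|_2^2+o(1))$ does not follow from the \emph{absolute} $o(1)$ error in that assumption. The quantities involved themselves vanish (they are of order $\delta_n^{2\beta^*+d^*}$ per bump), so an absolute $o(1)$ error gives no relative accuracy. The design assumption is still used for the separation bound, just not for this lemma.
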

	A proof of the above result is provided below.
	Combining \prettyref{lem:lowerbound-korostelev} with \eqref{eq:emp:sep:final}, we get,
	\[
	\max_{0\le j\le \tM}
	\mathbb{P}_{\nu_j}\!\left(
	\frac{1}{n}\sum_{i=1}^n(\hat\nu(\bx_i)-\nu_j(\bx_i))^2\geq s_n
	\right)\geq p^*/2.
	\]
	Hence, using the value of $\delta_n$ from \eqref{eq:Mn} and $s_n= \Omega(\delta_n^{2\beta^*})$ from \eqref{eq:emp:sep:final}, we get the desired lower bound of the order $\Omega\!\left(n^{-\frac{2\beta^*}{2\beta^*+d^*}}\right)$.
	This completes the proof of the lower bound.
	
	Finally, we end the section with a proof of \prettyref{lem:likelihoodratio}. The proof closely follows the proof of the special case of $d=1$ provided in \cite[Lemma 3.12]{malekian2025speckle}, extended here to the multidimensional scenario. We provide the details here for completeness.

	\begin{proof}[Proof of \prettyref{lem:likelihoodratio}]
		Our goal is to prove that for
		$y_i:=\nu_l(\bx_i)\,\xi_i+\tau_i$, $i=1,\ldots,n$,
		\begin{align}\label{eq:m6}
			\begin{split}
				&\mathbb{P}_{\nu_l}\!\left(
				\sum_i\frac{y_i^2}{1+\sigma_\tau^2}
				\left(\frac{\sigma_\tau^2+1}{\sigma_\tau^2+\nu_l^2(\bx_i)}-1\right)
				+\sum_i\log\frac{\sigma_\tau^2+\nu_l^2(\bx_i)}{\sigma_\tau^2+1}
				>-2\lambda_l\log \tM
				\right)>p^*.
			\end{split}
		\end{align}
		Define $s_{l,i}:=(1-\nu_l^2(\bx_i))/(\sigma_\tau^2+1)$.
		Then, the above display can be restated as
		\begin{align}\label{eq:m13}
			\begin{split}
				&\mathbb{P}_{\nu_l}\!\left(
				\sum_i\frac{y_i^2}{1+\sigma_\tau^2}
				\left(\frac{1}{1-s_{l,i}}-1\right)
				+\sum_i\log(1-s_{l,i})
				>-2\lambda_l\log \tM
				\right)>p^*.
			\end{split}
		\end{align}  
		By the mean value theorem, for some $\epsilon_{l,i},\epsilon_{l,i}'\in(0,s_{l,i})$, we have
		\[
		\tfrac{1}{1-s_{l,i}}=1+s_{l,i}+\tfrac{s_{l,i}^2}{(1-\epsilon_{l,i})^3},
		\qquad
		\log(1-s_{l,i})=-s_{l,i}-\tfrac{s_{l,i}^2}{2(1-\epsilon_{l,i}')^2}.
		\]
		For large $n$, $0<\epsilon_{l,i},\epsilon_{l,i}'<s_{l,i}\leq 2(1-\nu_l(\bx_i))<2\delta_n^{\beta^*} g_{\max}
		<\frac12$, where $g_{\max}=\max_{\bx} g(\bx)$. Since $\delta_n^{\beta^*}\to 0$ as $n\to\infty$, substituting and simplifying,
		\eqref{eq:m13} reduces to proving
		\begin{align}\label{eq:likelihood5}
			\mathbb{P}_{\nu_l}\!\left[
			\sum_i s_{l,i}\!\left(\frac{y_i^2}{1+\sigma_\tau^2}-1\right)
			+\sum_i s_{l,i}^2\!\left(\frac{y_i^2}{1+\sigma_\tau^2}-2\right)
			>-2\lambda_l\log \tM
			\right]>p^*.
		\end{align}
		Next, note that applying \prettyref{lem:lM} with $t=\log 3$ and $1-s_{l,i}=(\sigma_\tau^2+\nu_l^2(\bx_i))/(1+\sigma_\tau^2)$, we get
		\begin{align}\label{eq:quadratic_terms}
			\mathbb{P}_{\nu_l}\!\left(
			\sum_i\frac{y_i^2 s_{l,i}^2(1-s_{l,i})}{\sigma_\tau^2+\nu_l^2(\bx_i)}
			\leq\sum_i s_{l,i}^2(1-s_{l,i})
			-2\sqrt{\log 3}\sqrt{\sum_i s_{l,i}^4(1-s_{l,i})^2}
			\right)\leq\tfrac13.
		\end{align}
		Combining \eqref{eq:quadratic_terms} with \eqref{eq:likelihood5}, to establish \eqref{eq:m13} it
		suffices to prove, for some $p^*\geq0$,
		\begin{align}\label{eq:likelihood6}
			\begin{split}
				\mathbb{P}_{\nu_l}\!\left(
				\sum_i s_{l,i}\!\left(\frac{y_i^2}{1+\sigma_\tau^2}-1\right)
				-\sum_i s_{l,i}^2(1+s_{l,i})
				-2\sqrt{\log 3}\sqrt{\sum_i s_{l,i}^4}
				>-2\lambda_l\log \tM
				\right)>p^*+\tfrac13.
			\end{split}
		\end{align}
		
		Next, we bound $\sum_i s_{l,i}^r$ for $r=2,3,4$ via a Riemann sum approximation of $g_{n,j}(\cdot)$.
		Since the functions $\sth{g_{n,j}}_{j=1}^m$ have disjoint supports, for any $r\in\mathbb{Z}^+$, we have
		\[
		\sum_i\bigl(1-\nu_l(\bx_i)\bigr)^r
		=\sum_{\{j:(\bomega_l)_j=1\}}\sum_i g_{n,j}^r(\bx_i).
		\]
		For each fixed $j$, by the Riemann sum approximation, we get
		\begin{align}\label{eq:moments_f}
			\frac{1}{n}\sum_i g_{n,j}^r(\bx_i)
			= \delta_n^{r\beta^*}\,\frac{1}{n}\sum_i
			g^r\!\!\left(\tfrac{\bx_i-\ba_{n,j}}{\delta_n}\right)
			= \delta_n^{r\beta^*+d^*}\bigl(\|g\|_r^r+o(1)\bigr),
		\end{align}
		where the integral $\int_{[0,1]^{d^*}}g^r((\bx-\ba_{n,j})/\delta_n)\,d\bx
		=\delta_n^{d^*}\|g\|_r^r$ is approximated by the Riemann sum with error
		$o(\delta_n^{d^*})$ as $n\delta_n^{d^*}\to\infty$.  Summing over the $C_lm$
		nonzero entries of $\bomega_l$ (with $1\geq C_l\geq 1/16$) and using
		$s_{l,i}=(1-\nu_l(\bx_i))/(1+\sigma_\tau^2)$, we deduce
		\begin{align}\label{eq:sr_first}
			\frac{C_l m\,\delta_n^{r\beta^*+d^*}n\bigl(\|g\|_r^r+o(1)\bigr)}{(1+\sigma_\tau^2)^r}
			<\sum_i s_{l,i}^r<
			\frac{2^r C_l m\,\delta_n^{r\beta^*+d^*}n\bigl(\|g\|_r^r+o(1)\bigr)}{(1+\sigma_\tau^2)^r}.
		\end{align}
		
		For $\delta_n$ as in \eqref{eq:Mn}, we have
		$\frac14\leq\frac{\delta_n^{2\beta^*+d^*}n}{(1+\sigma_\tau^2)^2}\leq1$.
		Substituting the upper bounds for $\sum s_{l,i}^2,\sum s_{l,i}^3,\sum s_{l,i}^4$
		into \eqref{eq:likelihood6} and using $\delta_n=o(1)$, it suffices to
		prove
		\begin{align}\label{eq:likelihood8}
			\mathbb{P}_{\nu_l}\!\left(
			\sum_i s_{l,i}\!\left(\frac{y_i^2}{1+\sigma_\tau^2}-1\right)
			-4C_l m\,\|g\|_2^2+o(m)
			>-2\lambda_l\log \tM
			\right)>p^*+\tfrac13.
		\end{align}
		Next, we obtain a Gaussian approximation of the above probability via \prettyref{lem:BE}. To apply \prettyref{lem:BE}, we define $Z_i:=s_{l,i}(y_i^2/(1+\sigma_\tau^2)-1+s_{l,i})$.
		Since $\bx_i$'s are fixed, $y_i$ is a (non-identically distributed) random
		variable whose distribution depends on $\bx_i$ only through $\nu_l(\bx_i)$.
		Direct computation gives
		\begin{align}\label{eq:m14}
			\mathbb{E}(Z_i)=0,\quad
			\mathbb{E}(Z_i^2)=2s_{l,i}^2(1-s_{l,i})^2,\quad
			\mathbb{E}|Z_i|^3\leq 28\,s_{l,i}^3(1-s_{l,i})^3.
		\end{align}
		In view of the above deductions, we reorganize the terms in \eqref{eq:likelihood8} to get the equivalent statement to be shown
		\begin{align}\label{eq:likelihood9}
			\mathbb{P}_{\nu_l}\!\left(
			\sum_i {Z_i\over \sqrt{\sum_{i}\EE [Z_i^2]}}
			>{4C_l m\,\|g\|_2^2+ \sum_{i} s_{l,i}^2- o(m) -2\lambda_l\log \tM\over \sqrt{2\sum_i s_{l,i}^2(1-s_{l,i})^2}}
			\right)>p^*+\tfrac13.
		\end{align}
		Hence, using,
		\begin{align}\label{eq:bd:sum:sli}
			\sum_i s_{l,i}^2\leq 5C_l m\,\|g\|_2^2.
		\end{align}
		from \eqref{eq:sr_first} with $r=2$, $\delta_n^{2\beta^*+d^*}n=O(1)$, and $s_{l,i}=(1-\nu_l^2(\bx_i))/(\sigma_\tau^2+1)<1$, it suffices to show 
		\begin{align}\label{eq:likelihood10}
			\mathbb{P}_{\nu_l}\!\left(
			\sum_i {Z_i\over \sqrt{\sum_{i}\EE [Z_i^2]}}
			>\frac{m\sqrt{2}(9C_l\|g\|_2^2-\lambda_l/5)}{\sqrt{\sum_i s_{l,i}^2}}
			\right)>p^*+\tfrac13.
		\end{align}
		Now we are in a position to apply \prettyref{lem:BE}. As \eqref{eq:m14} implies 
		\[
		\frac{\mathbb{E}|Z_i|^3}{\mathbb{E}(Z_i^2)}
		\leq 14\,s_{l,i}(1-s_{l,i})
		\leq 28\bigl(1-\nu_l(\bx_i)\bigr)
		\leq 28\,\delta_n^{\beta^*} g_{\max},
		\]
		we use $\kappa=28\,\delta_n^{\beta^*} g_{\max}$, and let $F_{S_*}$ denote the CDF of $Z_i\over \sqrt{\EE[Z_i^2]}$ under $\nu_l$ to get from \prettyref{lem:BE}
		\begin{align}\label{eq:CLT:upper}
			\begin{split}
				\sup_x\bigl|F_{S_*}(x)-\Phi(x)\bigr|
				&= O\!\left(\frac{\delta_n^{\beta^*}}{\sqrt{\sum_i s_{l,i}^2}}\right)
				\overset{(a)}{=}O\!\left(\frac{\delta_n^{\beta^*}}{\sqrt{m}}\right)
				\overset{(b)}{=}O\!\left(\delta_n^{\beta^*+d^*/2}\right)=o(1),
			\end{split}
		\end{align}
		where (a) uses \eqref{eq:bd:sum:sli} and (b) uses $m=\delta_n^{-d^*}$. In view of the above, to prove \eqref{eq:likelihood8} it suffices to show
		\begin{align}\label{LR:afterBerryEsseen}
			\mathbb{P}\!\left(N(0,1)
			>\frac{m\sqrt{2}(9C_l\|g\|_2^2-\lambda_l/5)}{\sqrt{\sum_i s_{l,i}^2}}
			\right)>p^*+\tfrac13.
		\end{align}
		To show the above inequality, we first use \eqref{eq:bd:sum:sli}, to get
		\[
		\mathbb{P}\!\left(N(0,1)
		>\sqrt{10m}\!\left(9\|g\|_2\sqrt{C_l}
		-\frac{\lambda_l}{5\sqrt{C_l}\|g\|_2}\right)
		\right)>p^*+\tfrac13.
		\]
		Choose $\lambda_l=\sqrt{C_l/2}$ and $\|g\|_2<1/10$; then the argument of
		$N(0,1)$ is negative, and we may take $p^*=1/6$.
		This completes the proof.
	\end{proof}
	
	\section{Proof of technical results}
	
	\label{app:technical}

	\subsection{Proof of \prettyref{lmm:empirical-bound-1}}
	We use the Hanson-Wright inequality \prettyref{lmm:hanson wright} with the vector $\bxi=[\xi_1,\dots,\xi_n]$:
	\begin{align}
		\label{eq:m3}
		\mathbb{P}\!\left(
		\left|\bxi^\top D \bxi - \mathbb{E}[\bxi^\top D \bxi]\right| > t
		\right)
		\le
		\exp\left\{
		-c\min\!\left(
		\frac{t^2}{\|D\|_{HS}^2},
		\frac{t}{\|D\|_2}
		\right)
		\right\}.
	\end{align}
	We apply the above result with
	\[
	D = \Sigma^{*-1/2}(\tilde\Sigma-\Sigma^*)\Sigma^{*-1/2}.
	\]
	Then, we have
	\[
	\|D\|_{HS}^2
	= \mathrm{Tr}(D^2)
	= \mathrm{Tr}\!\left(
	\Sigma^{*-1}(\tilde\Sigma-\Sigma^*)\Sigma^{*-1}(\tilde\Sigma-\Sigma^*)
	\right),
	\]
	and
	\begin{align}
		\|D\|_2
		&\le
		\max_i f^*(\bx_i)^2
		\left|
		\frac1{\tilde f(\bx_i)^2}
		-
		\frac1{f^*(\bx_i)^2}
		\right|
		\\
		&\le
		\max_i f^*(\bx_i)^2
		\frac{\tilde f(\bx_i)^2 - f^*(\bx_i)^2}{\tilde f(\bx_i)^2 f^*(\bx_i)^2}
		\le
		\|\tilde f - f^*\|_\infty
		\max_i \frac{\tilde f(\bx_i)+f^*(\bx_i)}{(\tilde f(\bx_i))^2}
		\leq 
		\|\tilde f - f^*\|_\infty
		\frac{f_{\max}}{(f_{\min})^2}.
		\nonumber
	\end{align}
	Combining the above results, we continue \eqref{eq:m3} to get the desired upper bound.

	\subsection{Proof of \prettyref{lmm:ineq1}}
		Let $g(x)=x-\log x-a(x-1)^2$. Then we have
		\[
		g'(x)=1-\frac1x-2a(x-1)=-{2a(x-1)(x-\frac 1{2a})\over x}.
		\]
		Setting $g'(x)=0$ gives us two solutions $
		x=1, x=\frac1{2a}.$
		Next we note that $g(1)=0$, and
		\begin{itemize}
			\item $0<x<1$ implies $g'(x)<0$
			\item $1<x\leq \frac 1{2a}$ implies $g'(x)>0$. 
		\end{itemize}
		Thus
		$
		g(x)\ge0$ for $0 < x\le\frac1{2a}$, as desired.

	\subsection{Proof of \prettyref{lmm:chaining}}

	Given $\gamma\geq \delta$, consider the $\delta$-covering set given by $\calN(\delta,\calG_{m,s},\norm{\cdot}_\infty)$. Consider the set of functions $\calS = \{f^\sparse\in \mathcal{G}_{m,s}, \| f^\sparse- \bar f^\sparse\|_n \le \gamma\}$ and the subset of the covering set
	\begin{align}
		\bar \calN_\delta = \calN(\delta,\calG_{m,s},\norm{\cdot}_\infty)
		\cap
		\{f^\sparse\in \mathcal{G}_{m,s}, \| f^\sparse- f^*\|_n \le \gamma\}.
	\end{align}
	Given any $f^\sparse\in \calS$, consider $\tilde f^\sparse \in \bar \calN_{\delta}$ such that 
	\begin{align}
		\label{eq:m25}
		\|f^\sparse-\tilde f^\sparse\|_\infty\leq \delta.	
	\end{align}
	Then from \eqref{eq:ineq-estimation-highdim-2} we get
	\begin{align}
		\label{eq:m24}
		\frac 1n\abs{\by^\top(\Sigmastar-\Sigmatilde^\sparse)\by
			-
			\mathbb{E}\!\left[
			\by^\top(\Sigmastar-\Sigmatilde^\sparse)\by
			\right]}
		\leq \tilde C_1
		\sth{ \epsilon\gamma^2
			+\pth{1+\frac 1{2\epsilon}}{\pth{\nu_{n}+s\varrho_{n} + \frac tn}}}
	\end{align}
	with probability $1-e^{-t}$, where we used that $\varrho_{n,\delta}\leq C_{11}\varrho_n,\nu_{n,\delta}\leq C_{12}\nu_n$ for $\delta$ as in \eqref{eq:delta-sparse}, and $\tilde C_1,C_{11},C_{12}>0$ are constants. Using \prettyref{lmm:bernstein}, we get $\frac 1n \sum_{i=1}^n y_i^2\leq C_{21}+C_{22}\sqrt{\frac tn}+C_{23}\frac tn$ with probability $1-e^{-t}$, where $C_{21},C_{22},C_{23}>0$ are constants depending on $f_{\max},f_{\min}$. Hence, for a constant $C_{31}=C_{31}(f_{\max},f_{\min})$
	\begin{align}
		&\frac 1n\abs{\by^\top(\Sigma^\sparse-\Sigmatilde^\sparse)\by
			-
			\mathbb{E}\!\left[
			\by^\top(\Sigma^\sparse-\Sigmatilde^\sparse)\by
			\right]}
		\notag\\
		&\leq 
		\sup_{\bx}|(f^\sparse(\bx))^{-2}-(\tilde f^\sparse(\bx))^{-2}|
		\pth{\frac 1n \sum_{i=1}^n y_i^2 + \EE\qth{\frac 1n \sum_{i=1}^n y_i^2}}
		\notag\\
		&
		\stepa{\leq}
		C_{31}
		\|f^\sparse-\tilde f^\sparse\|_\infty
		\pth{1+\sqrt{\frac tn}+\frac tn}
		\leq 
		C_{32}
		\delta
		\pth{1+\frac tn}
		\stepb{\leq} C_{33}(\nu_n+\frac tn),		
	\end{align}
	where (a) follows by using $a\leq \max\{1,a^2\}\leq 1+a^2$ with $a=\frac tn$ and(b) follows by using $\delta\leq \min\{\nu_n,1\}$. In view of the above display, we use \eqref{eq:m25} to get for some constant $C_3>0$
	\begin{align*}
		\label{eq:m26}
		&\frac 1n\abs{\by^\top(\Sigma^*-\Sigma^\sparse)\by
			-
			\mathbb{E}\!\left[
			\by^\top(\Sigma^*-\Sigma^\sparse)\by
			\right]}
		\notag\\
		&
		\leq \frac 1n\abs{\by^\top(\Sigma^*-\Sigmatilde^\sparse)\by
			-
			\mathbb{E}\!\left[
			\by^\top(\Sigma^*-\Sigmatilde^\sparse)\by
			\right]}
		+\frac 1n\abs{\by^\top(\Sigma^\sparse-\Sigmatilde^\sparse)\by
			-
			\mathbb{E}\!\left[
			\by^\top(\Sigma^\sparse-\Sigmatilde^\sparse)\by
			\right]}
		\notag\\
		&\leq 
		C_3
		\sth{ \epsilon\gamma^2
			+\pth{1+\frac 1{2\epsilon}}{\pth{\nu_{n}+s\varrho_{n} + \frac tn}}},
	\end{align*}
	with probability $1-2e^{-t}$. This completes the proof.

\end{document}